\setlist{leftmargin=5mm}
\numberwithin{equation}{section}
\newcommand{\N}{\mathbb{N}}
\newcommand{\R}{\mathbb{R}}
\newcommand{\E}{\mathbb{E}}
\newcommand{\Prob}{\mathbb{P}}
\newcommand{\pnorm}[2]{\lVert#1\rVert_{#2}}
\newcommand{\bigpnorm}[2]{\big\lVert#1\big\rVert_{#2}}
\newcommand{\abs}[1]{\lvert#1\rvert}
\newcommand{\bigabs}[1]{\big\lvert#1\big\rvert}
\newcommand{\biggabs}[1]{\bigg\lvert#1\bigg\rvert}
\newcommand{\iprod}[2]{\left\langle#1,#2\right\rangle}
\renewcommand{\epsilon}{\varepsilon}
\newcommand{\floor}[1]{\left\lfloor #1 \right\rfloor}
\newcommand{\ceil}[1]{\left\lceil #1 \right\rceil}
\newcommand{\equald}{\stackrel{d}{=}}
\renewcommand{\hat}{\widehat}
\renewcommand{\tilde}{\widetilde}
\DeclareMathOperator{\dv}{div}
\DeclareMathOperator{\tr}{tr}
\DeclareMathOperator{\var}{Var}
\DeclareMathOperator{\rank}{rank}
\newcommand{\beq}{\begin{equation}}
\newcommand{\eeq}{\end{equation}}
\newcommand{\beqa}{\begin{equation} \begin{aligned}}
\newcommand{\eeqa}{\end{aligned} \end{equation}}
\newcommand{\beqas}{\begin{equation*} \begin{aligned}}
\newcommand{\eeqas}{\end{aligned} \end{equation*}}
\newcommand{\bit}{\begin{itemize}}
	\newcommand{\eit}{\end{itemize}}
\newcommand{\bmat}{\begin{bmatrix}}
	\newcommand{\emat}{\end{bmatrix}}
\DeclareMathOperator*{\argmin}{arg\,min}
\theoremstyle{definition}\newtheorem{problem}{Problem}[section]
\theoremstyle{definition}\newtheorem{definition}[problem]{Definition}
\theoremstyle{remark}
\theoremstyle{remark}\newtheorem{remark}[problem]{Remark}
\theoremstyle{definition}\newtheorem{example}[problem]{Example}
\theoremstyle{plain}\newtheorem{theorem}[problem]{Theorem}
\theoremstyle{plain}
\theoremstyle{plain}\newtheorem{lemma}[problem]{Lemma}
\theoremstyle{plain}\newtheorem{proposition}[problem]{Proposition}
\theoremstyle{plain}\newtheorem{corollary}[problem]{Corollary}
\theoremstyle{plain}
	\def\MR#1{}
\begin{document}

\title[High dimensional asymptotics of likelihood ratio tests]{High dimensional asymptotics of likelihood ratio tests in the Gaussian sequence model under convex constraints}
\thanks{The research of Q. Han is partially supported by DMS-1916221. The research of B. Sen is partially supported by DMS-2015376.}

\author[Q. Han]{Qiyang Han}

\address[Q. Han]{
Department of Statistics, Rutgers University, Piscataway, NJ 08854, USA.
}
\email{qh85@stat.rutgers.edu}

\author[B. Sen]{Bodhisattva Sen}

\address[B. Sen]{
	Department of Statistics, Columbia University, New York, NY 10027, USA.
}
\email{bodhi@stat.columbia.edu}

\author[Y. Shen]{Yandi Shen}

\address[Y. Shen]{
Department of Statistics, University of Washington, Seattle, WA 98105, USA.
}
\email{ydshen@uw.edu}

\date{\today}

\keywords{Central limit theorem, isotonic regression, lasso, normal approximation, power analysis, projection onto a closed convex set, second-order Poincar\'{e} inequalities, shape constraint}
\subjclass[2000]{62G08, 60F05, 62G10, 62E17}

\begin{abstract}
In the Gaussian sequence model $Y=\mu+\xi$, we study the likelihood ratio test (LRT) for testing $H_0: \mu=\mu_0$ versus $H_1: \mu \in K$, where $\mu_0 \in K$, and $K$ is a closed convex set in $\R^n$. In particular, we show that under the null hypothesis, normal approximation holds for the log-likelihood ratio statistic for a general pair $(\mu_0,K)$, in the high dimensional regime where the estimation error of the associated least squares estimator diverges in an appropriate sense. The normal approximation further leads to a precise characterization of the power behavior of the LRT in the high dimensional regime. These characterizations show that the power behavior of the LRT is in general non-uniform with respect to the Euclidean metric, and illustrate the conservative nature of existing minimax optimality and sub-optimality results for the LRT. A variety of examples, including testing in the orthant/circular cone, isotonic regression, Lasso, and testing parametric assumptions versus shape-constrained alternatives, are worked out to demonstrate the versatility of the developed theory.
\end{abstract}

\maketitle



\section{Introduction}

\subsection{The likelihood ratio test}

Consider the Gaussian sequence model
\begin{align}\label{model:sequence}
Y = \mu + \xi, 
\end{align}
where $\mu \in \R^n$ is unknown and $\xi=(\xi_1,\ldots,\xi_n)$ is an $n$-dimensional standard Gaussian vector. In a variety of applications, prior knowledge on the mean vector $\mu$ can be naturally translated into the constraint $\mu\in K$, where $K$ is a closed convex set in $\R^n$. Two such important examples that will be considered in this paper are: (i) Lasso in the constrained form \cite{tibshirani1996regression}, where $K$ is an $\ell_1$-norm ball, and (ii) isotonic regression \cite{chatterjee2015risk}, where $K$ is the cone consisting of monotone sequences. We also refer the readers to \cite{baraud2002nonasymptotic, juditsky2002nonparametric, baraud2005testing, chatterjee2014new,guntuboyina2017nonparametric} and many references therein for a diverse list of further concrete examples of $K$. In this paper, we will be interested in the following `goodness-of-fit' testing problem:
\begin{align}\label{eq:testing_generic}
H_0: \mu = \mu_0\qquad \textrm{versus}\qquad H_1: \mu \in K,
\end{align}
where $\mu_0\in K\subset \R^n$, and $K$ is an arbitrary closed and convex subset of $\R^n$. Throughout the manuscript, the asymptotics will take place as $n\to \infty$, and the explicit dependence of $\mu,\mu_0,K$ and related quantities on the dimension $n$ will be suppressed for ease of notation.

Given observation $Y$ generated from model (\ref{model:sequence}), arguably the most natural and generic test for (\ref{eq:testing_generic}) is the likelihood ratio test (LRT). Under the Gaussian model (\ref{model:sequence}), the log-likelihood ratio statistic (LRS) for (\ref{eq:testing_generic}) takes the form
\begin{align}\label{def:TY}
T(Y) &\equiv \pnorm{Y-\mu_0}{}^2 - \pnorm{Y-\hat{\mu}_K}{}^2 \nonumber \\
&= \pnorm{\mu+\xi-\mu_0}{}^2 - \pnorm{\mu+\xi-\Pi_K(\mu+\xi)}{}^2 \geq 0.
\end{align}
Here $\hat{\mu}_K\equiv \Pi_K(Y) \equiv\arg\min_{\nu \in K} \pnorm{Y-\nu}{}^2$ is the metric projection of $Y$ onto the constraint set $K$ with respect to the canonical Euclidean $\ell_2$ norm $\|\cdot\|$ on $\R^n$. As $K$ is both closed and convex, $\Pi_K$ is well-defined, and the resulting $\hat{\mu}_K$ is both the least squares estimator (LSE) and the maximum likelihood estimator for the mean vector $\mu$ under the Gaussian model (\ref{model:sequence}). The risk behavior of $\hat{\mu}_K$ is completely characterized in the recent work \cite{chatterjee2014new}.

The LRT for (\ref{eq:testing_generic}) and its generalizations thereof have gained extensive attention in the literature, see e.g.~\cite{chernoff1954distribution,bartholomew1959test,bartholomew1959test2,bartholomew1961ordered,bartholomew1961test,kudo1963multivariate,barlow1972statistical,kudo1975generalized,robertson1978likelihood,warrack1984likelihood,shapiro1985asymptotic,raubertas1986hypothesis,robertson1988order,shapiro1988towards,menendez1991anomalies,menendez1992dominance,menendez1992testing,durot2001goodness,meyer2003test,sen2017testing,wei2019geometry} for an incomplete list. In our setting, an immediate way to use the LRS $T(Y)$ in (\ref{def:TY}) to form a test is to simulate the critical values of $T(Y)$ under $H_0$. More precisely, for any confidence level $\alpha \in (0,1)$, we may determine through simulations an acceptance region $\mathcal{I}_\alpha\subset \R$ such that the LRS satisfies $\Prob\big(T(Y)\in \mathcal{I}_\alpha\big)=1-\alpha$ under $H_0$, and then formulate the LRT accordingly. In some special cases, including the classical setting where $K$ is a subspace, the distribution of $T(Y)$ under the null is even known in closed form, so the simulation step can be skipped.

Clearly, the almost effortless LRT as described above already gives an exact type I error control at the prescribed level for a generic pair $(\mu_0,K)$. The equally important question of its power behavior, however, is more complicated and requires a much deeper level of investigation. In the classical setting of parametric models and certain semiparametric models, the power behavior of the LRT can be precisely determined, at least asymptotically, for contiguous alternatives in the corresponding parameter spaces, cf.~\cite{van2000asymptotic,van2002semiparametric}. An important and basic ingredient for the success of power analysis in these settings is the existence of a limiting distribution of the LRT under $H_0$ that can be `perturbed' in a large number of directions of alternatives.

Unfortunately, the distribution of the LRS $T(Y)$ in (\ref{def:TY}) under the null, for both finite-sample and asymptotic regimes, is only understood in very few cases. One such case is, as mentioned above, the classical setting where $K$ is a subspace of dimension $\dim(K)$. Then the null distribution of $T(Y)$ is a chi-squared distribution with $\dim(K)$ degrees of freedom. Another case is $\mu_0=0$ and $K$ is a closed convex cone. In this case, the null distribution of $T(Y)$ is the \emph{chi-bar squared distribution}, cf. \cite{bartholomew1961ordered,kudo1963multivariate,barlow1972statistical,kudo1975generalized,shapiro1985asymptotic,robertson1988order}, which can be expressed as a finite mixture of chi-squared distributions. Apart from these special cases, little next to nothing is known about the distribution of the LRS $T(Y)$ for a general pair of $(\mu_0,K)$ under the null $H_0$, owing in large part to the fact that the null distribution of $T(Y)$ highly depends on the exact location of $\mu_0$ with respect to $K$ and is thus intractable in general. Consequently, the lack of such a general description of the limiting distribution of $T(Y)$ causes a fundamental difficulty in obtaining precise characterizations of the power behavior of the LRT for a general pair $(\mu_0,K)$. On the other hand, such generality is of great interest as it allows us to consider several significant examples, for instance testing general signals in isotonic regression and with constrained Lasso. See Section \ref{section:example} for more details.

\subsection{Normal approximation and power characterization}

The unifying theme of this paper starts from the simple observation that in the classical setting where $K$ is a subspace, as long as $\dim(K)$ diverges as $n\rightarrow\infty$, the distribution of $T(Y)$ has a progressively Gaussian shape, under proper normalization. Such a normal approximation in `high dimensions' also holds for  the more complicated chi-bar squared distribution; see \cite{dykstra1991asymptotic,goldstein2017gaussian} for a different set of conditions. One may therefore hope that normal approximation of $T(Y)$ under the null would hold in a far more general context than just these cases. More importantly, such a distributional approximation would ideally form the basis for  power analysis of the LRT.

\subsubsection{Normal approximation}

The first main result of this paper (see Theorem~\ref{thm:lrt_clt_global_testing}) shows that, although the exact distribution of $T(Y)$ under $H_0$ is highly problem-specific and depends crucially on the pair $(\mu_0,K)$ as described above, Gaussian approximation of $T(Y)$ indeed holds in a fairly general context, after proper normalization. More concretely, we show that under $H_0$, 
\begin{align}\label{eq:normal_approx_intro}
\frac{T(Y) - m_{\mu_0}}{\sigma_{\mu_0}} \approx \mathcal{N}(0,1)\quad \hbox{ in total variation}
\end{align}
holds in the high dimensional regime where the estimation error $\E_{\mu_0} \pnorm{\hat{\mu}_K-\mu_0}{}^2$ diverges in some appropriate sense; see Theorem \ref{thm:lrt_clt_global_testing} and the discussion afterwards for an explanation. Here and below, $\mathcal{N}(0,1)$ denotes the standard normal distribution, and we reserve the notation
\begin{align}\label{eq:lrt_mean_var}
m_\mu \equiv \E_\mu (T(Y)) \qquad \text{and}\qquad  \sigma^2_\mu \equiv \var_\mu(T(Y))
\end{align}
for the mean and variance of the LRS $T(Y)$ under (\ref{model:sequence}) with mean $\mu$, so that $m_{\mu_0}$ and $\sigma^2_{\mu_0}$ in (\ref{eq:normal_approx_intro}) are the corresponding quantities under $H_0$. In a similar spirit, we use the subscript $\mu$ in $\Prob_\mu$ and other probabilistic notations to indicate that the evaluation is under (\ref{model:sequence}) with mean $\mu$.

When the normal approximation (\ref{eq:normal_approx_intro}) holds, an asymptotically equivalent formulation of the previously mentioned finite-sample LRT is the following LRT using acceptance region determined by normal quantiles: For any $\alpha \in (0,1)$, let
\begin{align}\label{def:lrt_generic}
\Psi(Y)\equiv \Psi(Y;m_{\mu_0},\sigma_{\mu_0})\equiv \bm{1}\bigg(\frac{T(Y)-m_{\mu_0}}{\sigma_{\mu_0}} \in \mathcal{A}_\alpha^c \bigg),
\end{align}
where $\mathcal{A}_\alpha$ is a possibly unbounded interval in $\R$ such that $\Prob(\mathcal{N}(0,1)\in \mathcal{A}_\alpha)=1-\alpha$. Common choices of $\mathcal{A}_\alpha$ include: (i) $(-\infty, z_\alpha]$ for the one-sided LRT, and (ii) $[-z_{\alpha/2},z_{\alpha/2}]$ for the two-sided LRT, where $z_\alpha$, for any $\alpha\in(0,1)$, is the normal quantile defined by $\Prob(\mathcal{N}(0,1)\geq z_\alpha) = \alpha$. Although $m_{\mu_0},\sigma_{\mu_0}$ do not admit general explicit formulae (some notable exceptions can be found in e.g. \cite[Table 6.1]{mccoy2014from} or \cite[Table 1]{goldstein2017gaussian}), their numeric values can be approximated by simulations. In what follows, we will focus on the LRT given by (\ref{def:lrt_generic}), and in particular its power behavior, when the normal approximation in (\ref{eq:normal_approx_intro}) holds.

\subsubsection{Power characterization}
Using the normal approximation (\ref{eq:normal_approx_intro}), our second main result (see Theorem \ref{thm:power_lrt_global_testing}) shows that under mild regularity conditions,
\begin{align}\label{eq:power_heuristic_intro}
 \E_\mu \Psi(Y;m_{\mu_0},\sigma_{\mu_0})\approx \Prob\bigg[\mathcal{N}\bigg(\frac{ m_\mu - m_{\mu_0}}{\sigma_{\mu_0}},1\bigg) \in \mathcal{A}_\alpha^c \bigg].
\end{align}
This power formula implies that for a wide class of alternatives, the LRS $T(Y)$ still has an asymptotically Gaussian shape under the alternative, but with a mean shift parameter $(m_\mu - m_{\mu_0})/\sigma_{\mu_0}$. In particular, (\ref{eq:power_heuristic_intro}) implies that
\begin{align}\label{eq:power_expansion_intro}
\mathcal{L}\bigg(\bigg\{\frac{ m_\mu - m_{\mu_0}}{\sigma_{\mu_0}}\bigg\}\bigg) \subset \Delta_{\mathcal{A}_\alpha}^{-1}(\beta)  \;\;
\Leftrightarrow \;\; \E_\mu \Psi(Y;m_{\mu_0},\sigma_{\mu_0})\to \beta \in [0,1].
\end{align}
Here $\overline{\R}\equiv \R \cup \{\pm \infty\}$, and for a sequence $\{w_n\}\subset \R$, $\mathcal{L}(\{w_n\})$ denotes the set of all limit points of $\{w_n\}$ in $\overline{\R}$, and the power function $\Delta_{\mathcal{A}_\alpha}:\overline{\R}\to [0,1]$ is defined in (\ref{def:bar_psi}) below. For instance, when $\mathcal{A}_\alpha=(-\infty,z_\alpha]$ is the acceptance region for the one-sided LRT, $\Delta_{(-\infty,z_\alpha]}(w)=\Phi(-z_\alpha+w)$. In general, $\Delta_{\mathcal{A}_\alpha}(0) = \alpha$ and $\Delta_{\mathcal{A}_\alpha}(w) = 1$ only if $w\in\{\pm\infty\}$. Hence the LRT is power consistent under $\mu$, i.e., $\E_\mu \Psi(Y;m_{\mu_0},\sigma_{\mu_0})\rightarrow 1$,  if and only if 
\begin{align}\label{eq:power_generic_intro}
\mathcal{L}\bigg(\bigg\{\frac{ m_\mu - m_{\mu_0}}{\sigma_{\mu_0}} \bigg\}\bigg) \subset \Delta_{\mathcal{A}_\alpha}^{-1}(1) \subset \{\pm \infty\}.
\end{align}
The asymptotically exact power characterization (\ref{eq:power_expansion_intro}) for the LRT is rather rare beyond the classical parametric and certain semiparametric settings under contiguous alternatives (cf.~\cite{van2000asymptotic,van2002semiparametric}). The setting in (\ref{eq:power_expansion_intro}) can therefore be viewed as a general nonparametric analogue of contiguous alternatives for the LRT in the Gaussian sequence model (\ref{model:sequence}).

A notable implication of (\ref{eq:power_generic_intro}) is that for any alternative $\mu\in K$, the power characterization of the LRT depends on the quantity $m_\mu - m_{\mu_0}$, which cannot in general be equivalently reduced to a usual lower bound condition on $\pnorm{\mu-\mu_0}{}$. This indicates the non-uniform power behavior of the LRT with respect to the Euclidean norm $\pnorm{\cdot}{}$. As the LRT (with an optimal calibration) is known to be minimax optimal in terms of uniform separation under $\pnorm{\cdot}{}$ in several examples (cf.~\cite{wei2019geometry}), the non-uniform characterization (\ref{eq:power_generic_intro}) hints that the minimax optimality criteria can be too conservative and non-informative for evaluating the power behavior of the LRT.

Another implication of (\ref{eq:power_generic_intro}) is that it is possible in certain cases that the one-sided LRT (i.e., $\mathcal{A}_\alpha = (-\infty, z_\alpha]$) has an asymptotically vanishing power, whereas the two-sided LRT (i.e., $\mathcal{A}_\alpha = [-z_{\alpha/2}, z_{\alpha/2}]$) is power consistent. This phenomenon occurs when the limit point $-\infty$ in (\ref{eq:power_generic_intro}) is achieved for certain alternatives $\mu \in K$ in the high dimensional limit. See Remark \ref{rmk:one_two_sided_LRT} ahead for a detailed discussion.

\subsection{Testing subspace versus closed convex cone}\label{subsec:intro_subspace}
A particularly important special setting for (\ref{eq:testing_generic}) is the case of testing $H_0: \mu = 0$ versus $H_1: \mu \in K$, where $K$ is assumed to be a closed convex cone in $\R^n$. We perform a detailed case study of the following slightly more general testing problem: 
\begin{align}\label{eq:testing_special}
H_0: \mu \in K_0\qquad \textrm{versus}\qquad H_1: \mu \in K,
\end{align}
where $K_0\subset K\subset \R^n$ is a subspace, and $K$ is a closed convex cone. The primary motivation to study~\eqref{eq:testing_special} arises from the problem of testing a global polynomial structure versus its shape-constrained generalization; concrete examples include constancy versus monotonicity, linearity versus convexity, etc.; see Section \ref{subsec:example_subspace} for details. The LRS for (\ref{eq:testing_special}) takes the slightly modified form:
\begin{align}\label{def:TY_subspace}
T(Y)&\equiv T_{K_0}(Y)\equiv \pnorm{Y-\hat{\mu}_{K_0}}{}^2 - \pnorm{Y-\hat{\mu}_K}{}^2 \nonumber \\
&= \pnorm{\mu+\xi-\Pi_{K_0}(\mu+\xi)}{}^2 - \pnorm{\mu+\xi-\Pi_K(\mu+\xi)}{}^2.
\end{align}
The dependence in the notation of the LRS $T(Y)$ on $K_0$ will usually be suppressed when no confusion could arise.

Specializing our first main result to this testing problem, we show in Theorem \ref{thm:lrt_clt_pivotal} that normal approximation of $T(Y)$ under $H_0$ holds essentially under the minimal growth condition that $\delta_K - \dim(K_0)\to \infty$, where $\delta_K$ is the statistical dimension of $K$ (formally defined in Definition \ref{def:stat_dim}). Similar to (\ref{eq:power_expansion_intro}), the normal approximation makes possible the following precise characterization of the power behavior of the LRT under the prescribed growth condition (see Theorem~\ref{thm:power_lrt_subspace}):
\begin{align}\label{eq:power_subspace_intro_0}
&\mathcal{L}\bigg(\bigg\{\frac{ \E \pnorm{\Pi_K\big(\mu-\Pi_{K_0}(\mu)+\xi\big)}{}^2-\E \pnorm{\Pi_K(\xi)}{}^2}{\sigma_0}\bigg\}\bigg)\subset \Delta_{\mathcal{A}_\alpha}^{-1}(\beta) \cap [0,+\infty]\nonumber\\
& \Leftrightarrow \quad  \E_\mu \Psi(Y;m_{0},\sigma_{0})\to \beta \in [0,1].
\end{align}
As $\sigma_0^2=\var(T(\xi))\asymp \delta_K - \dim(K_0)$ (cf. Lemma \ref{lem:var_proj}) for the modified LRS $T(Y)$ in (\ref{def:TY_subspace}), the LRT is power consistent under $\mu$ if and only if 
\begin{align}\label{eq:power_subspace_intro}
\frac{ \E \pnorm{\Pi_K\big(\mu-\Pi_{K_0}(\mu)+\xi\big)}{}^2-\E \pnorm{\Pi_K(\xi)}{}^2 }{\big(\delta_K-\dim(K_0)\big)^{1/2}}\to +\infty.
\end{align}
Formula (\ref{eq:power_subspace_intro}) shows that power consistency of the LRT is determined completely by (a complicated expression involving) the `distance' of the alternative $\mu \in K$ to its projection onto $K_0$ in the problem (\ref{eq:testing_special}).
Compared to the uniform $\|\cdot\|$-separation rate derived in the recent work~\cite{wei2019geometry} (cf.~(\ref{cond:wwg}) below),  (\ref{eq:power_subspace_intro_0})-(\ref{eq:power_subspace_intro}) provide asymptotically precise power characterizations of the LRT for a sequence of point alternatives. This difference is indeed crucial as (\ref{eq:power_subspace_intro}), similar to (\ref{eq:power_generic_intro}), cannot be equivalently inverted into a lower bound on $\pnorm{\mu-\Pi_{K_0}(\mu)}{}$ alone. This illustrates that the non-uniform power behavior of the LRT is not an aberration in certain artificial testing problems, but is rather a fundamental property of the LRT in the high dimensional regime that already appears in the special yet important setting of testing subspace versus a cone.

\subsection{Examples}\label{subsec:intro_example}

As an illustration of the scope of our theoretical results, we validate the normal approximation of the LRT and exemplify its power behavior in two classes of problems:
\begin{enumerate}
	\item Testing in orthant/circular cone, isotonic regression and Lasso;
	\item Testing parametric assumptions versus shape-constrained alternatives, e.g.,~constancy versus monotonicity, linearity versus convexity, and generalizations thereof.
\end{enumerate}

\subsubsection{Non-uniform power of the LRT}\label{subsec:non_uniform_lrt}
Some of the above problems give clear examples of the aforementioned non-uniform power behavior of the LRT: In the problem of testing $\mu = 0$ versus the orthant and (product) circular cone, the LRT is indeed powerful against most alternatives in the region where the uniform separation in $\pnorm{\cdot}{}$ is not informative. More concretely:
\begin{itemize}
	\item In the case of the orthant cone, the LRT is known to be minimax optimal (cf.~\cite{wei2019geometry}) in terms of the uniform $\|\cdot\|$-separation of the order $n^{1/4}$. Our results show that the LRT is actually powerful for `most' alternatives $\mu$ with $\pnorm{\mu}{}=\mathcal{O}(n^{1/4})$, including some with $\|\cdot\|$-separation of the order $n^{\delta}$ for any $\delta > 0$. This showcases the conservative nature of the minimax optimality criteria. See Section \ref{subsec:example_orthant} for details.
	\item In the case of (product) circular cone, the LRT is known to be minimax sub-optimal (cf.~\cite{wei2019geometry}) with $\|\cdot\|$-separation of the order $n^{1/4}$ while the minimax separation rate is of the constant order. Our results show the minimax sub-optimality is witnessed only by a few unfortunate alternatives and the LRT is powerful within a large cylindrical set including many points of constant $\|\cdot\|$-separation order. This also identifies the minimax framework as too pessimistic for the sub-optimality results of the LRT; see Section \ref{subsec:example_circular} for details.
\end{itemize}

\subsection{Related literature}
The results in this paper are related to the vast literature on nonparametric testing in the Gaussian sequence model, or more general Gaussian models, under a minimax framework. We refer the readers to the monographs \cite{ingster2003nonparametric,gine2015mathematical} for a comprehensive treatment on this topic, and \cite{baraud2002nonasymptotic,juditsky2002nonparametric,donoho2004higher,baraud2005testing,ingster2010detection,arias2011global,verzelen2012minimax,comminges2013minimax,carpentier2015testing,collier2017minimax,carpentier2019minimax,carpentier2019optimal,carpentier2020estimation,mukherjee2020minimax} and references therein for some recent papers on a variety of testing problems. Many results in these references establish minimax separation rates under a pre-specified metric, with the Euclidean metric $\pnorm{\cdot}{}$ being a popular choice in the Gaussian sequence model. In particular, for the testing problem (\ref{eq:testing_special}), this minimax approach with $\pnorm{\cdot}{}$ metric is adopted in the recent work \cite{wei2019geometry}, which derived minimax lower bounds for the separation rates and the uniform separation rate of the LRT. These results show that the LRT is minimax rate-optimal in a number of examples, while being sub-optimal in some other examples. 

Our results are of a rather different flavor and give a precise distributional description of the LRT. Such a description is made possible by the central limit theorems of the LRS under the null proved in Theorems \ref{thm:lrt_clt_global_testing} and \ref{thm:lrt_clt_pivotal}. It also allows us to make two significant further steps beyond the work \cite{wei2019geometry}:
\begin{enumerate}
	\item For the testing problem (\ref{eq:testing_special}), we provide asymptotically \emph{exact} power formula of the LRT in (\ref{eq:power_subspace_intro_0}) for \emph{each and every alternative}, as opposed to \emph{lower bounds} for the uniform separation rates of the LRT in $\pnorm{\cdot}{}$ as in \cite{wei2019geometry}. As a result, the main results for the separation rates of the LRT in \cite{wei2019geometry} follow as a corollary of our main results (see Corollary \ref{cor:wwg} for a formal statement). 
	\item Our theory applies to the general testing problem (\ref{eq:testing_generic}) which allows for a general pair $(\mu_0,K)$ without a cone structure. This level of generality goes much beyond the scope of \cite{wei2019geometry} and covers several significant examples, including testing in isotonic regression and Lasso problems. 
\end{enumerate}
The precise power characterization we derive in (\ref{eq:power_subspace_intro_0}) has interesting implications when compared to the minimax results derived in \cite{wei2019geometry}. In particular, as discussed in Section \ref{subsec:non_uniform_lrt}, (i) it is possible that the LRT beats substantially the minimax separation rates in $\pnorm{\cdot}{}$ metric for individual alternatives, and (ii) the sub-optimality of the LRT in \cite{wei2019geometry} is actually only witnessed at alternatives along some `bad' directions. In this sense, our results not only give precise understanding of the power behavior of the canonical LRT in this testing problem, but also highlights some intrinsic limitations of the popular minimax framework under the Euclidean metric in the Gaussian sequence model, both in terms of its optimality and sub-optimality criteria.

From a technical perspective, our proof technique differs significantly from the one adopted in \cite{wei2019geometry}. Indeed, the proofs of the central limit theorems and the precise power formulae in this paper are inspired by the second-order Poincar\'e inequality due to \cite{chatterjee2009fluctuations} and related normal approximation results in \cite{goldstein2017gaussian}. These technical developments are of independent interest and have broader applicability; see for instance \cite{han2021general} for further developments in the context of testing high dimensional covariance matrices.

\subsection{Organization}

The rest of the paper is organized as follows. Section \ref{subsec:preliminary} reviews some basic facts on metric projection and conic geometry. Section \ref{section:clt_LRT} studies normal approximation for the LRS $T(Y)$ and the power characterizations of the LRT both in the general setting (\ref{eq:testing_generic}) and the more structured setting (\ref{eq:testing_special}). Applications of the abstract theory to the examples mentioned above are detailed in Section \ref{section:example}. Proofs are collected in Sections \ref{section:proof_clt} and \ref{section:proof_example} and the appendix.

\subsection{Notation}\label{section:notation}

For any positive integer $n$, let $[1:n]$ denote the set $\{1,\ldots,n\}$. For $a,b \in \R$, $a\vee b\equiv \max\{a,b\}$ and $a\wedge b\equiv\min\{a,b\}$. For $a \in \R$, let $a_\pm \equiv (\pm a)\vee 0$. For $x \in \R^n$, let $\pnorm{x}{p}$ denote its $p$-norm $(0\leq p\leq \infty)$, and $B_p(r;x)\equiv \{z \in \R^n: \pnorm{z-x}{p}\leq r\}$. We simply write $\pnorm{x}{}\equiv\pnorm{x}{2}$, $B(r;x)\equiv B_2(r;x)$, and $B(r) \equiv B(r;0)$ for notational convenience. By $\bm{1}_n$ we denote the vector of all ones in $\R^n$. For a matrix $M \in \R^{n\times n}$, let $\pnorm{M}{}$ and $\pnorm{M}{F}$ denote the spectral and Frobenius norms of $M$ respectively.

For a multi-index $\bm{k} = (k_1,\ldots,k_n)\in \mathbb{Z}_{\geq 0}^n$, let $\abs{\bm{k}} \equiv \sum_{i=1}^n k_i$. For $f:\R^n\rightarrow\R$, and $\bm{k} = (k_1,\ldots,k_n)\in \mathbb{Z}_{\geq 0}^n$, let $\partial_{\bm{k}} f (z) \equiv \frac{\partial^{\abs{\bm{k}} } f(z)}{\partial_{k_1} z_1 \cdots\partial_{k_n} z_n}$ for $z \in \R^n$ whenever definable. A vector-valued map $f:\R^n\rightarrow\R^m$ is said to have sub-exponential growth at $\infty$ if $\lim_{\pnorm{x}{}\to \infty} \pnorm{f(x)e^{-\pnorm{x}{}}}{}=0$. For $f=(f_1,\ldots,f_n):\R^n \to \R^n$, let $J_f(z) \equiv (\partial f_i(z)/\partial z_j)_{i,j=1}^n$ denote the Jacobian of $f$ and 
\begin{align*}
\dv f(z) \equiv \sum_{i=1}^n \frac{\partial}{\partial z_i} f_i (z)=\tr(J_f(z))
\end{align*}
for $z \in \R^n$ whenever definable.

We use $C_{x}$ to denote a generic constant that depends only on $x$, whose numeric value may change from line to line unless otherwise specified. $a\lesssim_{x} b$ and $a\gtrsim_x b$ mean $a\leq C_x b$ and $a\geq C_x b$ respectively, and $a\asymp_x b$ means $a\lesssim_{x} b$ and $a\gtrsim_x b$ ($a\lesssim b$ means $a\leq Cb$ for some absolute constant $C$). For two nonnegative sequences $\{a_n\}$ and $\{b_n\}$, we write $a_n\ll b_n$ (respectively~$a_n\gg b_n$) if $\lim_{n\rightarrow\infty} (a_n/b_n) = 0$ (respectively~$\lim_{n\rightarrow\infty} (a_n/b_n) = \infty$). We follow the convention that $0/0 = 0$. $\mathcal{O}_{\mathbf{P}}$ and $\mathfrak{o}_{\mathbf{P}}$ denote the usual big and small O notation in probability. 

We reserve the notation $\xi=(\xi_1,\ldots,\xi_n)$ for an $n$-dimensional standard normal random vector, and $\varphi,\Phi$ for the density and the cumulative distribution function of a standard normal random variable. For any $\alpha \in (0,1)$, let $z_\alpha$ be the normal quantile defined by $\Prob(\mathcal{N}(0,1)\geq z_\alpha) = \alpha$. For two random variables $X,Y$ on $\R$, we use $d_{\mathrm{TV}}(X,Y)$ and $d_{\mathrm{Kol}}(X,Y)$  to denote their total variation distance and Kolmogorov distance defined respectively as
\begin{align*}
d_{\mathrm{TV}}(X,Y)&\equiv \sup_{B \in \mathcal{B}(\R)}\bigabs{\Prob\big(X \in B\big)-\Prob\big(Y \in B\big)},\\
d_{\mathrm{Kol}}(X,Y)&\equiv \sup_{t \in \R}\bigabs{\Prob\big(X \leq t\big)-\Prob\big(Y \leq t\big)}.
\end{align*}
Here $\mathcal{B}(\R)$ denotes all Borel measurable sets in $\R$.

\section{Preliminaries: metric projection and conic geometry}\label{subsec:preliminary}

In this section, we review some basic facts on metric projection and conic geometry. For any $x\in\R^n$, the metric projection of $x$ onto a closed convex set $K\subset \R^n$ is defined by
\begin{align*}
\Pi_K(x) \equiv  \argmin\limits_{y\in K} \pnorm{x-y}{}^2.
\end{align*}
It is a standard fact that the map $\Pi_K$ is well-defined, $1$-Lipschitz and hence absolutely continuous. The Jacobian $J_{\Pi_K}$ is therefore almost everywhere (a.e.) well-defined.

Let $G:\R^n \rightarrow\R$ be defined by 
\begin{align*}
G(y)\equiv \pnorm{y-\Pi_K(y)}{}^2.
\end{align*}
We summarize some useful properties of $G$ and $J_{\Pi_K}$ in the following lemma. 
\begin{lemma}\label{lem:proj_basic}
	The following statements hold.
	\begin{enumerate}
		\item $G$ is absolutely continuous and its gradient $\nabla G(y) = 2(y-\Pi_K(y))$ has sub-exponential growth at $\infty$.
		\item For a.e. $y\in\R^n$, $\pnorm{J_{\Pi_K }(y) }{}\vee \pnorm{I-J_{\Pi_K }(y)}{}\leq 1$ and $J_{\Pi_K}(y)^\top \Pi_K(y)= J_{\Pi_K}(y)^\top y$.
	\end{enumerate}
\end{lemma}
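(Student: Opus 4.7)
The plan is to anchor both parts in the Moreau envelope $h(y) \equiv \sup_{k \in K}\big[\iprod{y}{k} - \pnorm{k}{}^2/2\big]$. As a supremum of affine functions, $h$ is convex; completing the square identifies the unique maximizer as $\Pi_K(y)$, so $h(y) = \iprod{y}{\Pi_K(y)} - \pnorm{\Pi_K(y)}{}^2/2$. Standard envelope/duality arguments (strict concavity of the maximand together with continuity of $\Pi_K$) then give $h \in C^1(\R^n)$ with $\nabla h(y) = \Pi_K(y)$.

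For part (1), the elementary expansion $G(y) = \pnorm{y}{}^2 - 2\iprod{y}{\Pi_K(y)} + \pnorm{\Pi_K(y)}{}^2 = \pnorm{y}{}^2 - 2 h(y)$ shows $G \in C^1(\R^n)$ with $\nabla G(y) = 2y - 2\nabla h(y) = 2(y - \Pi_K(y))$, hence $G$ is locally Lipschitz and a fortiori absolutely continuous. For sub-exponential growth at infinity, fix any $k_0 \in K$ and use the minimizing property $\pnorm{y - \Pi_K(y)}{} \leq \pnorm{y - k_0}{} \leq \pnorm{y}{} + \pnorm{k_0}{}$, so that $\pnorm{\nabla G(y)}{} e^{-\pnorm{y}{}} \lesssim (\pnorm{y}{} + 1) e^{-\pnorm{y}{}} \to 0$.

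For part (2), the bound $\pnorm{J_{\Pi_K}(y)}{} \leq 1$ a.e.\ follows directly from the $1$-Lipschitz property of $\Pi_K$ via Rademacher's theorem. Because $\Pi_K = \nabla h$ with $h$ convex, Alexandrov's theorem ensures that at a.e.\ $y$ the Jacobian $J_{\Pi_K}(y) = \nabla^2 h(y)$ exists and is symmetric positive semi-definite; the combined eigenvalue constraint in $[0,1]$ then yields $\pnorm{I - J_{\Pi_K}(y)}{} \leq 1$. For the remaining orthogonality identity, at any such $y$ apply the chain rule directly to $G(y) = \pnorm{y - \Pi_K(y)}{}^2$ to obtain $\nabla G(y) = 2(I - J_{\Pi_K}(y))^\top(y - \Pi_K(y))$; matching against the formula $\nabla G(y) = 2(y - \Pi_K(y))$ from part (1) isolates $J_{\Pi_K}(y)^\top(y - \Pi_K(y)) = 0$, which rearranges to the claimed identity.

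The main (mostly bookkeeping) obstacle is aligning the two a.e.-differentiability statements used in part (2), namely Rademacher's theorem for the Lipschitz map $\Pi_K$ and Alexandrov's theorem for the convex potential $h$, on a single full-measure set on which the chain-rule computation is simultaneously valid and $J_{\Pi_K}$ is known to be symmetric and positive semi-definite. Once that alignment is secured, both claims in (2) reduce to elementary linear algebra combined with the gradient formula already obtained in (1).
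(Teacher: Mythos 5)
Your proof is correct, and for the one piece the paper actually proves in-line---the identity $J_{\Pi_K}(y)^\top(y-\Pi_K(y))=0$ via the chain rule applied to $G$---your argument is identical to the paper's. Where you differ is that the paper delegates part~(1) and the operator-norm bounds in part~(2) to \cite[Lemmas~2.1, 2.2, A.2]{goldstein2017gaussian}, while you reconstruct them self-containedly through the Moreau envelope $h$; this is a clean and standard route for deriving $\nabla G = 2(y-\Pi_K(y))$. The one place where your route is heavier than necessary is the bound $\pnorm{I-J_{\Pi_K}(y)}{}\leq 1$: you invoke Alexandrov's theorem, establish symmetry and positive semi-definiteness of $J_{\Pi_K}=\nabla^2 h$ a.e., and must then reconcile the Alexandrov second derivative with the Rademacher Jacobian on a common full-measure set. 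This is all true, but there is a more elementary argument that sidesteps these issues entirely: the variational inequality $\iprod{x-\Pi_K(x)}{k-\Pi_K(x)}\leq 0$ for $k\in K$ yields, after symmetrization in $x,y$, the firm nonexpansiveness inequality $\pnorm{\Pi_K(x)-\Pi_K(y)}{}^2\leq \iprod{\Pi_K(x)-\Pi_K(y)}{x-y}$, which implies that \emph{both} $\Pi_K$ and $I-\Pi_K$ are $1$-Lipschitz; Rademacher's theorem then gives both operator-norm bounds directly, with no symmetry or second-order regularity required. This is almost certainly what the cited Goldstein--Nourdin--Peccati lemma does, and it avoids the "alignment" obstacle you flag at the end.
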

\begin{proof}
	(1) follows from~\cite[Lemma 2.2]{goldstein2017gaussian} and the proof of \cite[Lemma A.2]{goldstein2017gaussian}. The first claim of (2) is proved in \cite[Lemma 2.1]{goldstein2017gaussian}. For the second claim of (2), note that $\nabla G(y) = 2(I-J_{\Pi_K}(y))^\top (y-\Pi_K(y))$. By (1), $\nabla G(y) = 2(y-\Pi_K(y))$, so $J_{\Pi_K}(y)^\top(y-\Pi_K(y))=0$, proving the claim.
\end{proof}

Recall that a closed and convex cone $K\subset \R^n$ is polyhedral if it is a finite intersection of closed half-spaces, and a face of $K$ is a set of the form $K\cap H$, where $H$ is a supporting hyperplane of $K$ in $\R^n$. Let $\text{lin}(F)$ denote the linear span of $F$. The dimension of a face $F$ is $\dim F \equiv \dim(\text{lin}(F))$, and the relative interior of $F$ is the interior of $F$ in $\text{lin}(F)$.

The complexity of a closed convex cone $K$ can be described by its statistical dimension defined as follows.

\begin{definition}\label{def:stat_dim}
	The \emph{statistical dimension} $\delta_{K}$ of a closed convex cone $K$ is defined as $\delta_K\equiv \E \pnorm{\Pi_K(\xi)}{}^2$. 
\end{definition}

The statistical dimension $\delta_K$ has several equivalent definitions; see e.g. \cite[Proposition 3.1]{amelunxen2014living}. In particular, $\delta_K = \E \sup_{\nu \in K\cap B(1)}(\iprod{\nu}{\xi})^2$. For any polyhedral cone $K\subset\R^n$ and $ j \in \{0,\ldots,n\}$, the $j$-th intrinsic volume of $K$ is defined as
\begin{align}\label{def:v_j}
&v_j(K) \equiv \Prob\big(\Pi_{K}(\xi) \in \hbox{relative interior of a $j$-dimensional face of $K$}\big).
\end{align}
More generally, the intrinsic volumes $\{v_j(K)\}_{j=0}^n$ for a closed convex cone $K\subset\R^n$ are defined as the limit of (\ref{def:v_j}) using polyhedral approximation; see \cite[Section 7.3]{mccoy2014from}. These quantities are well-defined and have been investigated in considerable depth; see e.g., \cite{amelunxen2014living,mccoy2014from,goldstein2017gaussian}.  

\begin{definition}\label{def:V_K}
	For any closed convex cone $K\subset\R^n$, let $V_K$ be a random variable taking values in $\{0,\ldots,n\}$ such that $\Prob(V_K = j) = v_j(K)$. 
\end{definition}

We summarize some useful properties of $\delta_K$ and $V_K$ in the following lemma. An elementary and self-contained proof is given in Appendix \ref{section:proof_preliminary}.
\begin{lemma}\label{lem:var_proj}
	Let $K$ be a convex closed cone. Then 
	\begin{enumerate}
		\item  $\delta_K = \E V_K$; 
		\item $\var(\pnorm{\Pi_K(\xi)}{}^2) =  \var(V_K)+2\delta_K$;
		\item $2\delta_K\leq \var( \pnorm{\Pi_K(\xi)}{}^2)\leq 2\delta_K+2 \pnorm{\E \Pi_K(\xi)}{}^2\leq 4\delta_K$.
	\end{enumerate}
\end{lemma}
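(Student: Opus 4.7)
The plan is to establish (1)--(2) for polyhedral $K$ via a face-by-face decomposition of $W\equiv\|\Pi_K(\xi)\|^2$, and then extend to general closed convex cones by Hausdorff approximation, using continuity of $\delta_K$, the intrinsic volumes $v_j(K)$, and the moments of $W$ (the last by dominated convergence and the $1$-Lipschitz property of $\Pi_K$ from Lemma~\ref{lem:proj_basic}). Part (3) will then combine (2) with a Gaussian covariance identity.

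For polyhedral $K$, the Voronoi region of a face $F$ of dimension $j$ equals $\mathrm{relint}(F) + N_K(F)$, and $N_K(F)\perp \mathrm{lin}(F)$. The plan is to decompose $\xi=\xi_F + \xi_F^\perp$ into the orthogonal components in $\mathrm{lin}(F)$ and $\mathrm{lin}(F)^\perp$ (which are then independent standard Gaussians); on the event $E_F\equiv\{\xi_F\in\mathrm{relint}(F)\} \cap \{\xi_F^\perp\in N_K(F)\}$ one has $\Pi_K(\xi)=\xi_F$. Writing $\xi_F = R_F U_F$ with $R_F^2\sim \chi^2_j$ independent of a uniform spherical direction $U_F$, and using that $\mathrm{relint}(F)$ is itself a cone in $\mathrm{lin}(F)$ (so $\xi_F\in\mathrm{relint}(F)\iff U_F\in\mathrm{relint}(F)$), one obtains
\[
\E\bigl[W^m \bm{1}_{E_F}\bigr] = \E\bigl[(\chi^2_j)^m\bigr]\cdot \Prob(E_F),\qquad m=1,2.
\]
Summing over $j$-dimensional faces and using $v_j(K) = \sum_F \Prob(E_F)$: the case $m=1$ yields $\delta_K = \sum_j j\,v_j(K) = \E V_K$, proving (1); the case $m=2$, combined with $\E(\chi^2_j)^2 = j(j+2)$, yields $\E[W^2] = \E V_K^2 + 2\E V_K$, hence $\var(W) = \var(V_K) + 2\delta_K$, proving (2).

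For (3), the lower bound $\var(W)\ge 2\delta_K$ is immediate from (2) since $\var(V_K)\ge 0$. For the middle inequality, the plan is to invoke the Gaussian covariance identity: with $\eta$ an independent copy of $\xi$ and $\xi_t\equiv t\xi + \sqrt{1-t^2}\,\eta$, together with $\nabla W = 2\Pi_K(\xi)$ from Lemma~\ref{lem:proj_basic},
\[
\var(W) = \int_0^1 \E\bigl\langle \nabla W(\xi), \nabla W(\xi_t)\bigr\rangle\,dt = 4\int_0^1 \E\bigl\langle \Pi_K(\xi), \Pi_K(\xi_t)\bigr\rangle\,dt.
\]
For each coordinate $i$, the Hermite chaos expansion of $\Pi_K(\cdot)_i\in L^2$ yields $\mathrm{Cov}(\Pi_K(\xi)_i, \Pi_K(\xi_t)_i) = \sum_{k\ge 1} t^k c_{i,k} \le t\,\var(\Pi_K(\xi)_i)$ for $t\in[0,1]$ (via $t^k\le t$, where $c_{i,k}\ge 0$ denotes the variance of the $k$-th chaos projection of $\Pi_K(\cdot)_i$, summing to $\var(\Pi_K(\xi)_i)$). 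Summing over $i$ (so that $\sum_i \var(\Pi_K(\xi)_i) = \delta_K - \|\E\Pi_K(\xi)\|^2$) and adding back the mean-square term gives $\E\langle \Pi_K(\xi),\Pi_K(\xi_t)\rangle \le t\bigl(\delta_K - \|\E\Pi_K(\xi)\|^2\bigr) + \|\E\Pi_K(\xi)\|^2$; integrating in $t\in[0,1]$ produces $\var(W)\le 2\delta_K + 2\|\E\Pi_K(\xi)\|^2$. The final inequality $2\|\E\Pi_K(\xi)\|^2\le 2\delta_K$ is Jensen's inequality applied to $\|\cdot\|^2$.

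The main obstacle is the geometric decomposition underlying (2): the identity $\E[W\bm{1}_{V_K=j}] = j\,v_j(K)$ requires careful verification of the Voronoi decomposition $\mathrm{relint}(F)+N_K(F)$, the orthogonality $N_K(F)\perp\mathrm{lin}(F)$, and the independence plus cone scale-invariance that extract the $\chi^2_j$ distribution. Once this is in place, (1) and (2) fall out; the covariance-identity computation for (3) is then routine, and the polyhedral-to-general extension is a standard approximation.
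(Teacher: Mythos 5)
Your proof is correct, but it takes a genuinely different route from the paper. For (1)--(2) you use the face-by-face (Voronoi) decomposition of $\|\Pi_K(\xi)\|^2$: conditionally on $\Pi_K(\xi)$ landing in the relative interior of a $j$-dimensional face, $\|\Pi_K(\xi)\|^2$ is distributed as $\chi^2_j$, whence $\E[W^m\bm{1}_{V_K=j}] = \E[(\chi^2_j)^m]\,v_j(K)$ for $m=1,2$. This is precisely the content of McCoy--Tropp's Master Steiner formula / chi-bar-squared representation, and the paper explicitly \emph{avoids} it: it remarks that \cite[Proposition~4.4]{mccoy2014from} proves (2) via the Master Steiner formula and then gives an alternative proof by Gaussian integration-by-parts, expanding $V_K \equald \iprod{\xi}{\Pi_K(\xi)} - (\iprod{\xi}{\Pi_K(\xi)}-\dv\Pi_K(\xi))$ and invoking Stein-type identities together with the fact that $J_{\Pi_K}$ is a.e.\ a projection matrix for polyhedral $K$. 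Your approach buys a more concrete probabilistic picture (the conditional chi-squared law) but requires the Voronoi/face machinery; the paper's approach is slicker once one is willing to use Stein identities and avoids the geometric decomposition. For (3), your interpolation-plus-chaos argument is a self-contained proof of the improved Gaussian--Poincar\'e inequality $\var(W)\le \tfrac12\E\|\nabla W\|^2 + \tfrac12\|\E\nabla W\|^2$; the paper simply cites this inequality as \cite[Theorem~A.2]{goldstein2017gaussian} and applies it to $\nabla W = 2\Pi_K(\xi)$. Both arrive at the same bound, but the paper's version is a one-line appeal to a known result whereas you re-derive it. Both treatments use the same polyhedral-to-general extension for (1)--(2).
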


For any closed convex cone $K\subset\R^n$, its polar cone is defined as
\begin{align}\label{def:polar_cone}
K^*\equiv \left\{v\in\R^n: \iprod{v}{u}\leq 0, \text{ for all }u\in K\right\}.
\end{align}
With $\Pi_{K^*}$ denoting the metric projection onto $K^*$, Moreau's theorem \cite[Theorem 31.5]{rockafellar1997convex} states that for any $v\in\R^n$,
\begin{align*}
v = \Pi_K(v) + \Pi_{K^*}(v) \qquad \hbox{ with } \iprod{\Pi_K(v)}{\Pi_{K^*}(v)} = 0.
\end{align*}

\section{Theory}\label{section:clt_LRT}

\subsection{Normal approximation for $T(Y)$ and power characterizations}

We start by presenting the normal approximation result for $T(Y)$ in (\ref{def:TY}) under the null hypothesis~\eqref{eq:testing_generic}; see Section~\ref{pf:lrt_clt_global_testing} for a proof. This will serve as the basis for the size and power analysis of the LRT (\ref{def:lrt_generic}) in the testing problem (\ref{eq:testing_generic}).

\begin{theorem}\label{thm:lrt_clt_global_testing}
	Let $K\subset\R^n$ be a closed convex set and $\mu_0 \in K$. Then under $H_0$,
	\begin{align}\label{ineq:lrt_clt_global_testing}
	d_{\mathrm{TV}}\bigg( \frac{T(Y)-m_{\mu_0}}{ \sigma_{\mu_0} },\mathcal{N}(0,1)\bigg)\leq \frac{8 \sqrt{\E_{\mu_0}\pnorm{\hat{\mu}_{K}-\mu_0}{}^2}}{2\pnorm{\E_{\mu_0}\hat{\mu}_K-\mu_0}{}^2+ \pnorm{\E_{\mu_0} J_{\hat{\mu}_K }}{F}^2 }.
	\end{align}
	Here $J_{\hat{\mu}_K}\equiv J_{\hat{\mu}_K}(\xi)\equiv J_{\Pi_K}(\mu_0+\xi)$, and $m_{\mu_0}, \sigma_{\mu_0}$ are as defined in (\ref{eq:lrt_mean_var}).
\end{theorem}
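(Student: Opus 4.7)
The plan is to treat the LRS under $H_0$ as a smooth enough functional of the standard Gaussian $\xi$ and feed it into a second-order Poincar\'e-type normal approximation. Set $F(\xi)\equiv T(\mu_0+\xi) = \pnorm{\xi}{}^2 - G(\mu_0+\xi)$, with $G(y) = \pnorm{y - \Pi_K(y)}{}^2$ as in Section \ref{subsec:preliminary}. By Lemma \ref{lem:proj_basic}(1),
\[
\nabla F(\xi) = 2\xi - 2(\mu_0+\xi-\hat{\mu}_K) = 2(\hat{\mu}_K - \mu_0),
\]
so the almost-everywhere Hessian is $\mathrm{Hess}\,F(\xi) = 2J_{\hat{\mu}_K}$. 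The matrix $J_{\hat{\mu}_K}$ is symmetric (since $\Pi_K$ is the gradient of the convex $\mathcal{C}^{1,1}$ map $y\mapsto \tfrac{1}{2}\pnorm{y}{}^2 - \tfrac{1}{2}G(y)$ by Moreau's decomposition) and satisfies $\pnorm{J_{\hat{\mu}_K}}{}\leq 1$ a.e.\ by Lemma \ref{lem:proj_basic}(2). In particular, $F$ is $\mathcal{C}^{1,1}$ with Lipschitz gradient of constant at most $2$, which places the problem squarely in the setting where Chatterjee's second-order Poincar\'e inequality and its metric-projection refinement by Goldstein--Nourdin--Peccati apply.

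For the numerator, I would apply a Stein/second-order Poincar\'e-type bound adapted to this Lipschitz setting. Solving Stein's equation for the standard normal, integrating by parts against $F - \E_{\mu_0} F$, and exploiting the uniform a.e.\ bound $\pnorm{\mathrm{Hess}\,F}{}\leq 2$, one obtains an inequality of the form
\[
d_{\mathrm{TV}}\bigg(\frac{F(\xi) - \E_{\mu_0} F(\xi)}{\sigma_{\mu_0}}, \mathcal{N}(0,1)\bigg) \leq \frac{C\,\sqrt{\E_{\mu_0}\pnorm{\nabla F(\xi)}{}^2}}{\sigma_{\mu_0}^2}.
\]
Substituting $\pnorm{\nabla F(\xi)}{}^2 = 4\pnorm{\hat{\mu}_K-\mu_0}{}^2$ and carefully tracking numerical constants along Stein's method produces the numerator $16\sqrt{\E_{\mu_0}\pnorm{\hat{\mu}_K-\mu_0}{}^2}/\sigma_{\mu_0}^2$, which will eventually yield the factor $8$ after combining with the variance lower bound below.

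For the denominator, I would lower bound $\sigma_{\mu_0}^2=\var_{\mu_0}(F(\xi))$ by the variance contributions of the first two Wiener/Hermite chaos projections of $F$. By Stein's integration-by-parts identity, the degree-one projection of $F$ contributes $\pnorm{\E_{\mu_0}\nabla F}{}^2 = 4\pnorm{\E_{\mu_0}\hat{\mu}_K - \mu_0}{}^2$; using symmetry of $\mathrm{Hess}\,F$, the degree-two projection contributes at least $\tfrac{1}{2}\pnorm{\E_{\mu_0}\mathrm{Hess}\,F}{F}^2 = 2\pnorm{\E_{\mu_0} J_{\hat{\mu}_K}}{F}^2$. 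Orthogonality of the chaoses then gives
\[
\sigma_{\mu_0}^2 \geq 4\pnorm{\E_{\mu_0}\hat{\mu}_K - \mu_0}{}^2 + 2\pnorm{\E_{\mu_0} J_{\hat{\mu}_K}}{F}^2 = 2\Big(2\pnorm{\E_{\mu_0}\hat{\mu}_K - \mu_0}{}^2 + \pnorm{\E_{\mu_0} J_{\hat{\mu}_K}}{F}^2\Big),
\]
i.e.\ exactly twice the denominator in (\ref{ineq:lrt_clt_global_testing}). Combining with the previous display yields the claim.

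The main obstacle will be the a.e.\ nature of $J_{\Pi_K}$: since $\Pi_K$ is only Lipschitz (and not $\mathcal{C}^1$), both the Stein-type upper bound and the chaos-based variance lower bound are, as stated, valid only for smooth functionals. The standard remedy is to mollify $G$ by Gaussian convolution, apply the smooth versions of both inequalities to the mollified functional, and pass to the limit using the uniform a.e.\ controls $\pnorm{J_{\Pi_K}}{}\vee \pnorm{I - J_{\Pi_K}}{}\leq 1$ from Lemma \ref{lem:proj_basic}(2) to justify the interchange of limits. The identity $J_{\Pi_K}(y)^\top y = J_{\Pi_K}(y)^\top \Pi_K(y)$ in the same lemma should be useful for streamlining the Stein-type integration by parts that feeds into both steps.
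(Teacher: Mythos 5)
Your proposal matches the paper's proof essentially step for step: same choice of $F(\xi)=T(\mu_0+\xi)$, same computation $\nabla F=2(\hat\mu_K-\mu_0)$ and $\mathrm{Hess}\,F=2J_{\hat\mu_K}$, the same Chatterjee-style second-order Poincar\'e inequality (the paper isolates it as Proposition \ref{prop:lrt_clt}) to produce the numerator $16\sqrt{\E_{\mu_0}\|\hat\mu_K-\mu_0\|^2}/\sigma_{\mu_0}^2$, and the same Hermite-chaos variance lower bound $\sigma_{\mu_0}^2\geq 4\|\E_{\mu_0}\hat\mu_K-\mu_0\|^2+2\|\E_{\mu_0}J_{\hat\mu_K}\|_F^2$ (the paper's Lemma \ref{lem:var_lower_bound_fourier}), which combine to give the constant $8$. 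The only substantive divergence is how you justify the second-order Gaussian integration-by-parts: you propose a mollification argument, whereas the paper verifies the needed regularity directly by checking that $\nabla F$ is globally $2$-Lipschitz (hence $\partial_{\bm k}F$ for $|\bm k|\leq 1$ are absolutely continuous) together with sub-exponential growth of $\{\partial_{\bm k}F:|\bm k|\leq 2\}$, which is exactly what the Plancherel/Hermite expansion up to second order requires; both routes are fine, but the paper's is shorter and avoids the limit-interchange you would need to justify.
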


The bound (\ref{ineq:lrt_clt_global_testing}) is obtained by a generalization of \cite[Theorem 2.1]{goldstein2017gaussian} using the second-order Poincar\'e inequality \cite{chatterjee2009fluctuations}, together with a lower bound for $\sigma_{\mu_0}^2$ using Fourier analysis in the Gaussian space \cite[Section 1.5]{nourdin2012normal}. The Fourier expansion can be performed up to the second order thanks to the absolute continuity of the first-order partial derivatives of  $T(Y)$ (cf.~Lemma \ref{lem:proj_basic}).

We now comment on the structure of (\ref{ineq:lrt_clt_global_testing}). The first term $\pnorm{\E_{\mu_0}\hat{\mu}_K-\mu_0}{}^2$ in the denominator is the squared bias of the projection estimator $\hat{\mu}_K$, while the second term $\pnorm{\E_{\mu_0} J_{\hat{\mu}_K }}{F}^2$, which depends on the magnitudes of the first-order partial derivatives of $\hat{\mu}_K$, can be roughly understood as the `variance' of $\hat{\mu}_K$. Consequently, one may expect that the denominator is of the order $\E_{\mu_0}\pnorm{\hat{\mu}_{K}-\mu_0}{}^2$, so the overall bound scales as $\mathcal{O}\big(1/\sqrt{\E_{\mu_0}\pnorm{\hat{\mu}_{K}-\mu_0}{}^2}\big)$. As will be clear in Section \ref{section:example}, this is indeed the case in all the examples worked out, and the major step in applying (\ref{ineq:lrt_clt_global_testing}) to concrete problems typically depends on obtaining sharp lower bounds for the `variance' term $\pnorm{\E_{\mu_0} J_{\hat{\mu}_K }}{F}^2$, which may require non-trivial problem-specific techniques.

Using Theorem \ref{thm:lrt_clt_global_testing}, we may characterize the size and power behavior of the LRT. For a possibly unbounded interval $I \subset \R$, let $\Delta_I : \overline{\R}\to [0,1]$ be defined as follows: For $w \in \R$,
\begin{align}\label{def:bar_psi}
\Delta_I(w) \equiv 1-\Prob\big(\mathcal{N}(0,1) \in I-w\big)=\Prob\big(\mathcal{N}(0,1) \in I^c-w\big),
\end{align}
and $\Delta_{I}(\pm \infty)\equiv \lim_{w \to \pm \infty}\Delta_I(w)$, which is clearly well-defined. $\Delta_I$ is either monotonic or unimodal, so $\Delta_I^{-1}(\beta)$ contains at most two elements for any $\beta \in [0,1]$. Two primary examples of $I$ are $\mathcal{A}^{\textrm{os}}_\alpha \equiv (-\infty,z_\alpha]$ and $\mathcal{A}^{\textrm{ts}}_\alpha \equiv [-z_{\alpha/2},z_{\alpha/2}]$ --- the acceptance regions for the one- and two-sided LRTs respectively, where we have
\begin{align}\label{eq:delta_example}
\Delta_{\mathcal{A}^{\textrm{os}}_\alpha}(w) = \Phi(-z_{\alpha}+w),\;\; \Delta_{\mathcal{A}^{\textrm{ts}}_\alpha}(w) = \Phi(-z_{\alpha/2}+w) + \Phi(-z_{\alpha/2} - w). 
\end{align}
It is clear that $\Delta_{\mathcal{A}^{\textrm{os}}_\alpha}(0)  = \Delta_{\mathcal{A}^{\textrm{ts}}_\alpha}(0)  = \alpha$, $\Delta^{-1}_{\mathcal{A}^{\textrm{os}}_\alpha}(1) = \{+\infty\}$, and $\Delta^{-1}_{\mathcal{A}^{\textrm{ts}}_\alpha}(1) = \{\pm\infty\}$. Recall the definitions of $m_\mu$ and $\sigma^2_\mu$ for general $\mu\in K$ in (\ref{eq:lrt_mean_var}). The following result (see Section~\ref{section:proof_master_thm} for a proof) characterizes the power behavior of the LRT.

\begin{theorem}\label{thm:power_lrt_global_testing}
	Consider testing (\ref{eq:testing_generic}) using the LRT as in~\eqref{def:lrt_generic}. There exists some constant $C_{ \mathcal{A}_\alpha}>0$ such that
	\begin{align}\label{ineq:power_lrt_global_testing_expansion}
	\notag&\biggabs{ \E_\mu\Psi(Y;m_{\mu_0},\sigma_{\mu_0})- \Delta_{\mathcal{A}_\alpha}\bigg(\frac{m_\mu-m_{\mu_0}}{\sigma_{\mu_0}}\bigg)} \\
	&\qquad \leq 2\cdot \mathrm{err}_{\mu_0}+C_{\mathcal{A}_\alpha } \cdot  \mathscr{L}\bigg( 1\bigwedge\frac{\pnorm{\mu-\mu_0}{}}{  \abs{m_\mu-m_{\mu_0}} \vee \sigma_{\mu_0} }   \bigg).
	\end{align}
	Here 
	\begin{align*}
	\mathrm{err}_{\mu_0} \equiv d_{\mathrm{Kol}}\bigg( \frac{T(\mu_0+\xi)-m_{\mu_0}}{ \sigma_{\mu_0} },\mathcal{N}(0,1)\bigg)\leq \hbox{right hand side of (\ref{ineq:lrt_clt_global_testing})},
	\end{align*}
	and $\mathscr{L}(x) \equiv x\sqrt{1\vee \log(1/x)} $ for $x > 0$ and $\mathscr{L}(0)\equiv 0$. Consequently:	
	\begin{enumerate}
		\item The LRT in~\eqref{def:lrt_generic} has size
		\begin{align*}
		\bigabs{\E_{\mu_0} \Psi(Y;m_{\mu_0},\sigma_{\mu_0})-\alpha}\leq 2\cdot\mathrm{err}_{\mu_0}.
		\end{align*}
		\item 
		Suppose the normal approximation of $T(Y)$ holds under $H_0$, i.e., $\mathrm{err}_{\mu_0}\to 0$.  Then, for any $\mu\in K$ such that 
		\begin{align}\label{cond:power_lrt_global_testing_0}
		\pnorm{\mu-\mu_0}{}\ll \abs{ m_\mu - m_{\mu_0}} \vee \sigma_{\mu_0},
		\end{align}
		we have
		\begin{align}\label{eq:power_lrt_global_testing}
		\mathcal{L}\bigg(\bigg\{\frac{ m_\mu - m_{\mu_0}}{\sigma_{\mu_0}}\bigg\}\bigg) \subset \Delta_{\mathcal{A}_\alpha}^{-1}(\beta) \; \Leftrightarrow \; \E_\mu \Psi(Y;m_{\mu_0},\sigma_{\mu_0})\to \beta \in [0,1].
		\end{align}
		\sloppy Hence under (\ref{cond:power_lrt_global_testing_0}), the LRT is power consistent under $\mu$, i.e., $\E_\mu \Psi(Y;m_{\mu_0},\sigma_{\mu_0})\to 1$, if and only if 
		\begin{align}\label{cond:power_lrt_global_testing_1}
		\mathcal{L}\bigg(\bigg\{\frac{ m_\mu - m_{\mu_0}}{\sigma_{\mu_0}}\bigg\}\bigg) \subset \Delta_{\mathcal{A}_\alpha}^{-1}(1) \subset \{\pm \infty\}.
		\end{align}
	\end{enumerate}
\end{theorem}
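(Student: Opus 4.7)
My plan is to reduce the analysis under the alternative $\mu$ to the null normal approximation already provided by Theorem~\ref{thm:lrt_clt_global_testing}. Coupling both null and alternative to the same standard Gaussian vector $\xi$, I realize $Y = \mu + \xi$ and introduce the drift
\begin{align*}
D(\xi) \equiv T(\mu+\xi) - T(\mu_0+\xi), \qquad \E D(\xi) = m_\mu - m_{\mu_0}.
\end{align*}
This yields the key additive decomposition
\begin{align*}
\frac{T(\mu+\xi) - m_{\mu_0}}{\sigma_{\mu_0}} = W + c + R,
\end{align*}
with $W \equiv (T(\mu_0+\xi) - m_{\mu_0})/\sigma_{\mu_0}$ at Kolmogorov distance $\mathrm{err}_{\mu_0}$ from $\mathcal{N}(0,1)$ by Theorem~\ref{thm:lrt_clt_global_testing}, $c \equiv (m_\mu - m_{\mu_0})/\sigma_{\mu_0}$ the deterministic mean shift matching the argument of $\Delta_{\mathcal{A}_\alpha}$, and $R \equiv (D(\xi) - \E D(\xi))/\sigma_{\mu_0}$ a centered residual that I expect to be negligible under the contiguity-type hypothesis~\eqref{cond:power_lrt_global_testing_0}.

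Next I would establish sub-Gaussian tails for $R$. By Lemma~\ref{lem:proj_basic}(1), $\nabla T(y) = 2(\Pi_K(y)-\mu_0)$, so the chain rule gives $\nabla_\xi D(\xi) = 2(\Pi_K(\mu+\xi) - \Pi_K(\mu_0+\xi))$, whose norm is bounded by $2\pnorm{\mu-\mu_0}{}$ by the $1$-Lipschitz property of $\Pi_K$. Standard Gaussian Lipschitz concentration then yields
\begin{align*}
\Prob\!\big(|R| > \eta\big) \leq 2\exp\!\Big(-\eta^2 \sigma_{\mu_0}^2 \big/ \big(8\pnorm{\mu-\mu_0}{}^2\big)\Big) \quad\text{for all } \eta>0.
\end{align*}

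The third step is a truncation/smoothing argument. For any $\eta > 0$,
\begin{align*}
\E_\mu \Psi = \Prob(W+c+R \in \mathcal{A}_\alpha^c) \leq \Prob\!\big(W+c \in (\mathcal{A}_\alpha^c)^\eta\big) + \Prob(|R|>\eta),
\end{align*}
where $(\mathcal{A}_\alpha^c)^\eta$ denotes the $\eta$-enlargement. The first probability is controlled by $\Delta_{\mathcal{A}_\alpha}(c)$ up to $2\,\mathrm{err}_{\mu_0}$ (from Kolmogorov on $W$, with a factor $2$ since $\mathcal{A}_\alpha^c$ is at most two half-lines) plus the $\mathcal{N}(c,1)$-measure of $(\mathcal{A}_\alpha^c)^\eta \setminus \mathcal{A}_\alpha^c$, which is at most $C\eta\cdot\max_{t\in\partial\mathcal{A}_\alpha}\varphi(t-c)$. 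The symmetric lower bound assembles into~\eqref{ineq:power_lrt_global_testing_expansion}. Balancing the boundary term against the sub-Gaussian tail with the choice $\eta \asymp (\pnorm{\mu-\mu_0}{}/\sigma_{\mu_0})\sqrt{\log(\sigma_{\mu_0}/\pnorm{\mu-\mu_0}{})}$ produces the $\mathscr{L}$-form. Claim (1) is the special case $\mu=\mu_0$, where $D\equiv 0$ and no truncation is needed; for claim (2), hypothesis~\eqref{cond:power_lrt_global_testing_0} together with $\mathrm{err}_{\mu_0}\to 0$ drives the right-hand side of~\eqref{ineq:power_lrt_global_testing_expansion} to zero, and continuity of $\Delta_{\mathcal{A}_\alpha}$ on $\overline{\R}$ translates convergence of $\E_\mu\Psi$ into the limit-point characterization~\eqref{eq:power_lrt_global_testing}, with~\eqref{cond:power_lrt_global_testing_1} an immediate consequence since $\Delta_{\mathcal{A}_\alpha}^{-1}(1) \subset \{\pm\infty\}$.

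The main technical obstacle I anticipate is upgrading $\sigma_{\mu_0}$ to $|m_\mu-m_{\mu_0}|\vee\sigma_{\mu_0}$ in the $\mathscr{L}$ denominator. A crude estimate $\eta\sup\varphi \lesssim \eta$ already gives an error of the form $\mathscr{L}(\pnorm{\mu-\mu_0}{}/\sigma_{\mu_0})$, which suffices for typical applications but is wasteful when $|c|$ is large. Exploiting the Gaussian tail decay $\varphi(t-c)$ near $\partial\mathcal{A}_\alpha$ for large $|c|$ permits a proportionally larger $\eta$, giving the sharper denominator. This refinement is essential to keep the bound informative for alternatives deep in the high-dimensional regime where $(m_\mu-m_{\mu_0})/\sigma_{\mu_0}$ diverges.
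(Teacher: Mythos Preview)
Your proposal is correct and follows essentially the same route as the paper. Both arguments rest on the identical additive decomposition $W+c+R$, the same Lipschitz bound $\pnorm{\nabla_\xi D(\xi)}{}\le 2\pnorm{\mu-\mu_0}{}$ (which the paper isolates as Proposition~\ref{prop:concentration_mean_shift}), and the same two-stage control of the error: a crude Lipschitz estimate on the normal CDF yielding the $\sigma_{\mu_0}$ denominator, plus a sharper estimate exploiting that $\sup_u |u|\,\varphi(t-u)<\infty$ to obtain the $|m_\mu-m_{\mu_0}|$ denominator. The paper packages this last step as a standalone Lemma~\ref{lem:normal_mean_multi} on multiplicative perturbations of a normal mean, which is exactly your ``Gaussian tail decay at $\partial\mathcal{A}_\alpha$ permits a proportionally larger $\eta$'' observation written in multiplicative rather than additive form.
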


\begin{remark}\label{remark:power_normal_cond}
	The validity of the normal approximation in Theorem \ref{thm:power_lrt_global_testing}-(2) is imposed to express the exact power behavior (\ref{eq:power_lrt_global_testing}) with the normal quantile. More generally, as long as the normalized LRS $\big(T(Y)-m_{\mu_0}\big)/\sigma_{\mu_0}$ has a distributional limit under $H_0$, (\ref{eq:power_lrt_global_testing}) can be obtained accordingly with the corresponding quantiles. 
\end{remark}

\begin{remark}
Some comments on conditions (\ref{cond:power_lrt_global_testing_0}) and (\ref{cond:power_lrt_global_testing_1}) in detail. 
\begin{enumerate}
	\item
	Condition (\ref{cond:power_lrt_global_testing_0}) centers around the key deviation quantity
	\begin{align}\label{def:deviation}
	\Delta T_{\mu,\mu_0}(\xi)\equiv T(\mu+\xi) - T(\mu_0 + \xi),
	\end{align}
	which can be shown to satisfy
	\begin{align*}
	\E(\Delta T_{\mu,\mu_0}) = m_\mu - m_{\mu_0}, \qquad \var(\Delta T_{\mu,\mu_0}) \leq \|\mu - \mu_0\|^2.
	\end{align*} 
	Moreover, it can be shown that $\Delta T_{\mu,\mu_0}$ concentrates around its mean $m_\mu - m_{\mu_0}$ with sub-Gaussian tails (see Proposition \ref{prop:concentration_mean_shift}). This concentration result allows us to connect the normal approximation under the null in Theorem \ref{thm:lrt_clt_global_testing} to the power behavior of the LRT under the alternative.
	
	\item The condition (\ref{cond:power_lrt_global_testing_0}) cannot be removed in general for the validity of the power characterization (\ref{eq:power_lrt_global_testing}). In fact, in the small separation regime $\pnorm{\mu-\mu_0}{}\ll \sigma_{\mu_0}$, (\ref{cond:power_lrt_global_testing_0})  is automatically fulfilled; in the large separation regime where $\pnorm{\mu-\mu_0}{}\gg \sigma_{\mu_0}$, (\ref{cond:power_lrt_global_testing_0}) can typically be verified by establishing a \emph{quadratic lower bound} $\abs{m_\mu-m_{\mu_0}}\gtrsim \pnorm{\mu-\mu_0}{}^2$. In this sense (\ref{cond:power_lrt_global_testing_0}) excludes possibly ill-behaved alternatives that violate the prescribed quadratic lower bound in the critical regime $\pnorm{\mu-\mu_0}{} \asymp \sigma_{\mu_0}$. Such ill-behaved alternatives do exist; see e.g., Example \ref{example:necessity_reg_cond} ahead for more details. 

	\item To verify (\ref{cond:power_lrt_global_testing_1}), some problem specific understanding for $m_\mu$ and $\sigma_{\mu_0}$ is needed. As $\E_\mu\iprod{\xi}{\hat{\mu}_K} = \E_\mu\dv \hat{\mu}_K$ by Stein's identity, we have
	\begin{align}\label{def:m_mu}
	m_\mu  = \|\mu - \mu_0\|^2 + 2\E_\mu \dv \hat{\mu}_K- \E_\mu \pnorm{\hat{\mu}_K-\mu}{}^2,
	\end{align}
	hence the numerator of (\ref{cond:power_lrt_global_testing_1}) requires sharp estimates of the expected `degrees of freedom' $\E_\mu \dv \hat{\mu}_K$ (cf.~\cite{meyer2000degrees}), and the estimation error $\E_\mu \pnorm{\hat{\mu}_K-\mu}{}^2$. A (near) matching upper and lower bound for $\sigma_{\mu_0}$ will also be required to obtain necessary and sufficient characterizations. We mention that (\ref{cond:power_lrt_global_testing_1}) cannot in general be equivalently inverted into a lower bound on $\pnorm{\mu-\mu_0}{}$ only; see the remarks after Theorem \ref{thm:power_lrt_subspace} for a more detailed discussion.
\end{enumerate}
\end{remark}

\begin{remark}\label{rmk:one_two_sided_LRT}
	The LRT defined in (\ref{def:lrt_generic}) depends on the choice of the acceptance region $\mathcal{A}_\alpha$.  Two obvious choices are:
	\begin{enumerate}
		\item (\emph{One-sided LRT}).  Let $\mathcal{A}_\alpha\equiv \mathcal{A}^{\textrm{os}}_\alpha = (-\infty, z_\alpha]$. This leads to the following one-sided LRT:
		\begin{align}\label{def:LRT_one_sided}
		\Psi_{\textrm{os}}(Y)\equiv \Psi_{\textrm{os}}(Y;m_{\mu_0},\sigma_{\mu_0})\equiv \bm{1}\bigg(\frac{T(Y)-m_{\mu_0}}{\sigma_{\mu_0}}>z_\alpha\bigg).
		\end{align}
		\item (\emph{Two-sided LRT}). Let $\mathcal{A}_\alpha\equiv \mathcal{A}^{\textrm{ts}}_\alpha = [-z_{\alpha/2},z_{\alpha/2}]$. This leads to the following two-sided LRT:
		\begin{align}\label{def:LRT_two_sided}
		\Psi_{\textrm{ts}}(Y)\equiv \Psi_{\textrm{ts}}(Y;m_{\mu_0},\sigma_{\mu_0})\equiv \bm{1}\bigg(\biggabs{\frac{T(Y)-m_{\mu_0}}{\sigma_{\mu_0}}}>z_{\alpha/2}\bigg).
		\end{align}
	\end{enumerate}
	In the classical case where $K$ is a subspace of fixed dimension, the one-sided LRT is power consistent (under $\mu \in K$) if and only if the two-sided LRT is power consistent, so one can simply use the standard one-sided LRT. The situation can be rather different for certain high dimensional instances of $K$.  Under the setting of Theorem \ref{thm:power_lrt_global_testing}-(2), as  $\Delta_{\mathcal{A}^{\textrm{os}}_\alpha}^{-1}(1) = \{+\infty\}$  while $\Delta_{\mathcal{A}^{\textrm{ts}}_\alpha}^{-1}(1) = \{\pm\infty\}$, power consistency under $\mu$ for the one-sided LRT implies that for the two-sided LRT, but the converse fails when the $-\infty$ limit in (\ref{cond:power_lrt_global_testing_1}) is achieved. See Example \ref{example:LRT_two_sided} ahead for a concrete example. However, in the special case where $\mu_0=0$ and $K$ is a closed convex cone, $(m_\mu-m_{\mu_0})/\sigma_{\mu_0}$ can only diverge to $+\infty$ under mild growth condition on $K$, so in this case power consistency is equivalent for one- and two-sided LRTs. Also see Remark \ref{rmk:subspace_one_sided_LRT}-(1).
\end{remark}

As a simple toy example of Theorem \ref{thm:power_lrt_global_testing}, we consider the testing problem (\ref{eq:testing_generic}) in the linear regression case, where $K\equiv K_X\equiv \{X\theta: \theta \in \R^p\}$ for some fixed design matrix $X \in \R^{n\times p}$, with $p\leq n$. We will be interested in the high dimensional regime $\rank(X)\to \infty$ where the normal approximation for the LRT holds under the null.
\begin{proposition}\label{prop:global_testing_subspace_toy}
	Consider testing (\ref{eq:testing_generic}) with $K=K_X$. Suppose that $\rank(X)\to \infty$. Let $\Psi \in \{\Psi_{\mathrm{os}},\Psi_{\mathrm{ts}}\}$. 
	\begin{enumerate}
		\item If $\mu_0 \in K_X$, then
		\begin{align}\label{ineq:lrt_clt_global_testing_linreg}
		d_{\mathrm{TV}}\bigg( \frac{T(Y)-m_{\mu_0}}{ \sigma_{\mu_0} },\mathcal{N}(0,1)\bigg)&\leq \frac{8}{\sqrt{\rank (X)} }.
		\end{align}
		Consequently the LRT is asymptotically size $\alpha$ with $\E_{\mu_0} \Psi(Y) = \alpha + \mathcal{O}(1/\sqrt{\rank (X)})$.
		\item For any $\mu \in K_X$, $m_\mu - m_{\mu_0} = \|\mu - \mu_0\|^2$, and the LRT is power consistent under $\mu$, i.e., $\E_{\mu} \Psi(Y)\to 1$, if and only if $\pnorm{\mu-\mu_0}{}\gg (\rank(X))^{1/4}$.
	\end{enumerate}	
\end{proposition}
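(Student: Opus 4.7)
\textbf{Plan for Proposition \ref{prop:global_testing_subspace_toy}.} The proof will be a direct application of the abstract Theorems~\ref{thm:lrt_clt_global_testing} and~\ref{thm:power_lrt_global_testing}, since $K=K_X$ is a subspace and everything reduces to linear algebra with the orthogonal projector $P_X$ onto the column space of $X$. Write $r \equiv \rank(X)$.

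For part (1), the projection estimator is $\hat\mu_K = P_X Y = P_X(\mu_0+\xi)$. Since $\mu_0 \in K_X$, we have $P_X\mu_0=\mu_0$, so $\hat\mu_K-\mu_0 = P_X\xi$. This immediately yields
\begin{align*}
\E_{\mu_0}\pnorm{\hat\mu_K-\mu_0}{}^2 = \tr(P_X) = r,\quad  \E_{\mu_0}\hat\mu_K - \mu_0 = 0,\quad J_{\hat\mu_K} \equiv P_X,
\end{align*}
so $\pnorm{\E_{\mu_0} J_{\hat\mu_K}}{F}^2 = \tr(P_X^\top P_X) = \tr(P_X) = r$. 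Substituting into~(\ref{ineq:lrt_clt_global_testing}) gives the bound $8\sqrt{r}/(2\cdot 0 + r) = 8/\sqrt{r}$, proving (\ref{ineq:lrt_clt_global_testing_linreg}). The size conclusion then follows from Theorem~\ref{thm:power_lrt_global_testing}-(1) and $\mathrm{err}_{\mu_0}\leq d_{\mathrm{TV}}$, since $r\to\infty$.

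For part (2), I would compute $T(Y)$ under $\mu \in K_X$ explicitly. Since $(I-P_X)\mu=(I-P_X)\mu_0=0$,
\begin{align*}
T(Y) = \pnorm{Y-\mu_0}{}^2 - \pnorm{(I-P_X)Y}{}^2 = \pnorm{\mu-\mu_0}{}^2 + 2\iprod{\mu-\mu_0}{\xi} + \pnorm{P_X\xi}{}^2.
\end{align*}
Taking expectation, $m_\mu = \pnorm{\mu-\mu_0}{}^2 + r$, so $m_\mu - m_{\mu_0} = \pnorm{\mu-\mu_0}{}^2$. For the variance at $\mu_0$, the cross term $\iprod{\mu-\mu_0}{\xi}$ vanishes and $\sigma_{\mu_0}^2 = \var(\pnorm{P_X\xi}{}^2) = 2\tr(P_X) = 2r$ (alternatively, this follows from Lemma~\ref{lem:var_proj}-(3), as subspaces are cones and $\delta_{K_X}=r$).

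To conclude consistency, I would invoke Theorem~\ref{thm:power_lrt_global_testing}-(2). The required regularity condition (\ref{cond:power_lrt_global_testing_0}) reads $\pnorm{\mu-\mu_0}{}\ll \pnorm{\mu-\mu_0}{}^2 \vee \sqrt{2r}$, which is automatic for any $\mu\in K_X$ once $r\to\infty$: if $\pnorm{\mu-\mu_0}{}\leq \sqrt{2r}$ then the right side is at least $\sqrt{2r}\to\infty$, otherwise $\pnorm{\mu-\mu_0}{}\to\infty$ forces $\pnorm{\mu-\mu_0}{}\ll \pnorm{\mu-\mu_0}{}^2$. Therefore (\ref{eq:power_lrt_global_testing}) applies with $(m_\mu-m_{\mu_0})/\sigma_{\mu_0} = \pnorm{\mu-\mu_0}{}^2/\sqrt{2r} \geq 0$; since only the $+\infty$ limit point is accessible, both $\Psi_{\mathrm{os}}$ and $\Psi_{\mathrm{ts}}$ are power consistent under $\mu$ iff $\pnorm{\mu-\mu_0}{}^2/\sqrt{r}\to\infty$, i.e.\ iff $\pnorm{\mu-\mu_0}{}\gg r^{1/4}$. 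There is no real obstacle here; the only mildly delicate point is the case analysis verifying (\ref{cond:power_lrt_global_testing_0}), but the subspace structure makes it trivial because there is no bias and the variance is exactly computable.
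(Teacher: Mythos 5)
Your proof is correct and follows essentially the same route as the paper: for part (1), compute the bias, estimation error, and Jacobian of the projection onto $K_X$ and plug into Theorem~\ref{thm:lrt_clt_global_testing}; for part (2), derive $m_\mu - m_{\mu_0} = \|\mu-\mu_0\|^2$ and $\sigma_{\mu_0}^2 \asymp \rank(X)$, check that condition (\ref{cond:power_lrt_global_testing_0}) is automatic, and invoke Theorem~\ref{thm:power_lrt_global_testing}-(2). One small note: you correctly obtain $\sigma_{\mu_0}^2 = 2\rank(X)$ (the chi-squared variance), whereas the paper's proof states $\sigma_{\mu_0}^2 = \rank(X)$ -- an apparent typo there that does not affect the asymptotic conclusion.
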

\begin{proof}
	\noindent (1). Note that $\hat{\mu}_{K_X} = \Pi_{K_X}(Y)=X(X^\top X)^{-}X^\top Y \equiv PY$, where $A^-$ denotes the pseudo-inverse for $A$. Then $\E_{\mu_0} \hat{\mu}_{K_X} = P \mu_0 = \mu_0$, $J_{\hat{\mu}_{K_X}} = P$ and
	\begin{align*}
	\E_{\mu_0} \pnorm{\hat{\mu}_{K_X} -\mu_0 }{}^2 = \pnorm{\E_{\mu_0} J_{\hat{\mu}_{K_X}}}{F}^2 &= \tr(PP^\top) = \dim(K_X) =\rank(X). 
	\end{align*}
	The claim (1) now follows from Theorem \ref{thm:lrt_clt_global_testing}.
	
	\noindent (2).	By (\ref{def:m_mu}), for any $\mu \in K_X$,
	\begin{align*}
	m_\mu
	& = \|\mu - \mu_0\|^2 + \E\big[ 2\iprod{\xi}{P\xi}-\pnorm{P\xi}{}^2 \big]\\
	&= \|\mu - \mu_0\|^2 + \E[ \pnorm{P\xi}{}^2 ] =\|\mu - \mu_0\|^2 + \rank(X),
	\end{align*}
	and with $\mu_0 \in K_X$, 
	\begin{align*}
	\sigma_{\mu_0}^2 = \var\big(\pnorm{P\xi}{}^2\big)=\rank(X). 
	\end{align*}
	As $\pnorm{\mu-\mu_0}{}\ll \pnorm{\mu-\mu_0}{}^2 \vee \sqrt{\rank(X)}$ always holds, the claim follows from Theorem \ref{thm:power_lrt_global_testing}-(2).
\end{proof}

More examples on testing in orthant/circular cone, isotonic regression and Lasso are worked out in Section \ref{section:example}.

\subsection{Subspace versus closed convex cone}\label{subsec:space_cone}

In this subsection, we study in detail the testing problem (\ref{eq:testing_special}) as an important special case of (\ref{eq:testing_generic}). The additional subspace and cone structure will allow us to give more explicit characterizations of the size and the power of the LRT; note that here the LRS $T(Y)$ takes the modified form (\ref{def:TY_subspace}). We start with the following simple observation.
\begin{lemma}\label{lem:invariance_lrt}
	Let $K$ be a closed convex set in $\R^n$. Then for $\mu$ such that $K-\mu \subset K$, we have
	\begin{align*}
	\Pi_K(\mu+\xi) = \mu+\Pi_K(\xi),\quad \forall \xi \in \R^n.
	\end{align*}
	Consequently,
	\begin{align*}
	\pnorm{\mu+\xi-\Pi_K(\mu+\xi)}{}^2 = \pnorm{\xi-\Pi_K(\xi)}{}^2.
	\end{align*}
\end{lemma}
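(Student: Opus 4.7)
The plan is to prove this by a one-line change of variables in the definition of the projection, and then read off the consequence.

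Concretely, I would start from the variational definition
\[
\Pi_K(\mu+\xi) \;=\; \argmin_{y\in K}\;\|y-(\mu+\xi)\|^2,
\]
and substitute $y = \mu + z$. The objective becomes $\|z-\xi\|^2$, so the claim reduces entirely to showing that the substitution $z \mapsto \mu + z$ biject $K$ with itself, i.e., that $\mu + K = K$. Once this is established, the minimization over $y\in K$ is equivalent to the minimization over $z\in K$ of $\|z-\xi\|^2$, whose unique minimizer is $\Pi_K(\xi)$. Hence $\Pi_K(\mu+\xi) = \mu + \Pi_K(\xi)$.

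The translation-invariance of $K$ under $\mu$ is where the hypothesis $K-\mu \subset K$ enters. The containment $K - \mu \subset K$ gives $K \subset \mu + K$ directly. For the reverse containment $\mu + K \subset K$, one uses the ambient structure of the lemma's intended applications: in Section~\ref{subsec:space_cone} the set $K$ is a closed convex cone and $\mu$ lies in a subspace $K_0 \subset K$, so $-\mu \in K_0 \subset K$ as well, and since a convex cone is closed under addition, $k + \mu \in K$ for every $k\in K$. Thus $K+\mu = K$, validating the change of variables. (In other words, the hypothesis $K-\mu\subset K$ is applied in a context where it is in fact equivalent to $K-\mu = K$.)

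The consequence is then an immediate substitution:
\[
\|\mu+\xi-\Pi_K(\mu+\xi)\|^2 \;=\; \|\mu+\xi - \mu - \Pi_K(\xi)\|^2 \;=\; \|\xi-\Pi_K(\xi)\|^2.
\]
There is no real obstacle; the only care is to justify the translation invariance of $K$ under $\mu$ from the stated hypothesis together with the ambient cone/subspace structure that the lemma is applied in.
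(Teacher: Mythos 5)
Your change-of-variables argument is a correct but genuinely different route from the paper's proof. The paper verifies the variational inequality characterization of projection directly: it checks that $\iprod{(\mu+\xi)-(\mu+\Pi_K(\xi))}{\nu-(\mu+\Pi_K(\xi))}\leq 0$ for all $\nu\in K$, which reduces after cancellation to $\iprod{\xi-\Pi_K(\xi)}{(\nu-\mu)-\Pi_K(\xi)}\leq 0$, and this holds because $\nu-\mu\in K$ by the hypothesis $K-\mu\subset K$. Your argument instead substitutes $y=\mu+z$ in the $\argmin$ and exploits a bijection of the feasible set. Both are standard convex-analytic moves; the variational-inequality route is more ``local'' (first-order conditions), while yours is ``global'' (symmetry of the optimization problem).

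The more interesting difference is that you explicitly flag a subtlety that the paper's proof glosses over. The variational characterization of $\Pi_K(\mu+\xi)$ requires two things: the candidate point $\mu+\Pi_K(\xi)$ must lie in $K$, and the inequality above must hold for all $\nu\in K$. The paper only verifies the inequality; feasibility $\mu+\Pi_K(\xi)\in K$ is never checked, and in fact it does not follow from $K-\mu\subset K$ alone. (A one-dimensional counterexample: $K=(-\infty,0]$, $\mu=1$, $\xi=-0.5$ gives $K-\mu\subset K$ but $\mu+\Pi_K(\xi)=0.5\notin K$, and indeed $\Pi_K(\mu+\xi)=0\neq 0.5$.) So the lemma as stated is slightly too weak: what is actually needed is $K-\mu=K$, i.e.\ both $K-\mu\subset K$ and $K+\mu\subset K$. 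You correctly observe this, correctly note that $K-\mu\subset K$ is equivalent to $K\subset\mu+K$, and correctly supply the missing containment from the cone-plus-subspace structure present everywhere the lemma is invoked ($\pm\mu\in K_0\subset K$ with $K$ a cone). Your proof is thus a little more careful than the paper's; if anything, your parenthetical remark---that the hypothesis is only ever used in a context where it is equivalent to $K-\mu=K$---is exactly the right way to reconcile the gap.
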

\begin{proof}
	By the definition of projection, we want to verify
	\begin{align*}
	\iprod{\mu+\xi-(\mu+\Pi_K(\xi))}{\nu-(\mu+\Pi_K(\xi))}\leq 0,\quad \forall \nu \; \in K.
	\end{align*}
	This amounts to verifying that
	\begin{align*}
	\iprod{\xi-\Pi_K(\xi)}{(\nu-\mu)-\Pi_K(\xi)}\leq 0, \quad \forall \nu \in K.
	\end{align*}
	As $\nu - \mu \in K$ by the condition $K-\mu \subset K$, the above inequality holds by the projection property for $\Pi_K(\xi)$.
\end{proof}
Recall the definition of the statistical dimension $\delta_K$ in Definition \ref{def:stat_dim}. The above lemma provides us with simplifications of $m_\mu$ and $\sigma^2_\mu$ as defined in (\ref{eq:lrt_mean_var}): under the setting of (\ref{eq:testing_special}), for any $\mu\in K_0$,
\begin{align}\label{eq:lrt_mean_var_special}
m_{\mu} \equiv m_0 = \delta_{K}-\delta_{K_0}, \quad \;\;\sigma_\mu^2 \equiv \sigma_0^2 =  \var\big(\pnorm{\Pi_{K}(\xi)}{}^2-\pnorm{\Pi_{K_0}(\xi)}{}^2\big).
\end{align}
Moreover, as $K_0$ is a subspace, we have $\delta_{K_0} = \dim(K_0)$. The following result (proved in Section~\ref{pf:lrt_clt_pivotal}) derives the normal approximation of $T(Y)$ with an explicit error bound.

\begin{theorem}\label{thm:lrt_clt_pivotal}
	Suppose $K_0\subset K\subset \R^n$ are such that $K_0$ is a subspace and $K$ is a closed convex cone. Then for $\mu \in K_0$,  
	\begin{align*}
	d_{\mathrm{TV}}\bigg( \frac{T(Y)-m_0}{\sigma_0 },\mathcal{N}(0,1)\bigg)
	&\leq \frac{8}{\sqrt{\delta_K-\delta_{K_0}}} .
	\end{align*}
\end{theorem}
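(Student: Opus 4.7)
The plan is to reduce the statement to a pure cone problem in $K_0^{\perp}$ and then recycle the intermediate second-order Poincar\'e bound from the proof of Theorem~\ref{thm:lrt_clt_global_testing}, with the generic Fourier lower bound on the variance there replaced by the sharper cone-specific bound from Lemma~\ref{lem:var_proj}.

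First I would reduce to $\mu=0$. Lemma~\ref{lem:invariance_lrt} applies to both projections: for $\mu\in K_0$, $K_0-\mu\subset K_0$ since $K_0$ is a subspace, and $K-\mu\subset K$ because $-\mu\in K_0\subset K$ together with $K+K\subset K$ for the convex cone $K$. Hence $\Pi_{K_0}(\mu+\xi)=\mu+\Pi_{K_0}(\xi)$ and $\Pi_K(\mu+\xi)=\mu+\Pi_K(\xi)$, so $T(Y)$ is distributionally independent of $\mu\in K_0$. Setting $\tilde K\equiv K\cap K_0^{\perp}$ and letting $P_0$ denote orthogonal projection onto $K_0$, decompose $\xi=\xi_0+\xi_1$ with $\xi_0\equiv P_0\xi$ and $\xi_1\equiv(I-P_0)\xi$; the invariance of $K$ under translations by $K_0$ gives the orthogonal factorization $\Pi_K(\xi)=\xi_0+\Pi_{\tilde K}(\xi_1)$, while $\Pi_{K_0}(\xi)=\xi_0$. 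Applying Moreau's identity inside each cone, the LRS collapses to
\[
T(Y)=\pnorm{\Pi_K(\xi)}{}^2-\pnorm{\Pi_{K_0}(\xi)}{}^2=\pnorm{\Pi_{\tilde K}(\xi_1)}{}^2,
\]
and taking expectations in the Pythagorean identity $\pnorm{\Pi_K(\xi)}{}^2=\pnorm{\xi_0}{}^2+\pnorm{\Pi_{\tilde K}(\xi_1)}{}^2$ yields $\delta_{\tilde K}=\delta_K-\delta_{K_0}$.

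Viewing $\xi_1$ as a standard Gaussian in $K_0^{\perp}\cong\R^{n-\delta_{K_0}}$, the quantity $\pnorm{\Pi_{\tilde K}(\xi_1)}{}^2$ is exactly the original-form LRS of Theorem~\ref{thm:lrt_clt_global_testing} for the cone-testing problem $(\mu_0,K)=(0,\tilde K)$ in $K_0^{\perp}$. The proof of that theorem combines Chatterjee's second-order Poincar\'e inequality (using $\pnorm{J_{\Pi_{\tilde K}}}{}\le 1$ a.e.\ from Lemma~\ref{lem:proj_basic} together with the fourth-moment estimate $\E\pnorm{\Pi_{\tilde K}(\xi_1)}{}^4\lesssim\delta_{\tilde K}^2$ obtained from Gaussian concentration of the $1$-Lipschitz map $\xi_1\mapsto\pnorm{\Pi_{\tilde K}(\xi_1)}{}$) with the Gaussian Fourier lower bound $\sigma_0^2\ge 2(2\pnorm{\E\Pi_{\tilde K}(\xi_1)}{}^2+\pnorm{\E J_{\Pi_{\tilde K}}}{F}^2)$. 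Before that last step is invoked, the argument already produces the intermediate estimate
\[
d_{\mathrm{TV}}\!\left(\frac{T(Y)-m_0}{\sigma_0},\mathcal{N}(0,1)\right)\le \frac{16\sqrt{\delta_K-\delta_{K_0}}}{\sigma_0^2},
\]
which, when combined with the Fourier bound, produces the denominator and the constant $8$ in the statement of Theorem~\ref{thm:lrt_clt_global_testing}.

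I would then replace the Fourier variance lower bound by the sharper cone-specific estimate from Lemma~\ref{lem:var_proj}(3),
\[
\sigma_0^2=\var\pnorm{\Pi_{\tilde K}(\xi_1)}{}^2\ge 2\delta_{\tilde K}=2(\delta_K-\delta_{K_0}).
\]
Plugging this into the intermediate bound immediately yields $d_{\mathrm{TV}}\le 8/\sqrt{\delta_K-\delta_{K_0}}$. The main obstacle is that Theorem~\ref{thm:lrt_clt_global_testing} cannot be invoked as a black box: its denominator $2\pnorm{\E\Pi_{\tilde K}(\xi_1)}{}^2+\pnorm{\E J_{\Pi_{\tilde K}}}{F}^2$ need not be of order $\delta_{\tilde K}$ for a general cone (for a ``spread'' cone it can be as small as $\delta_{\tilde K}^2/(n-\delta_{K_0})$), so the gain must be inserted inside the proof at the point where the Fourier lower bound is applied. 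Careful bookkeeping of the numerical constants through this substitution produces the clean factor $8$ in the final bound.
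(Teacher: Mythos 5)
Your proposal is correct and follows essentially the same path as the paper: reduce to $\mu=0$ via Lemma~\ref{lem:invariance_lrt}, use the orthogonal Pythagorean decomposition of $\Pi_K(\xi)$ into its $K_0$ and $K\cap K_0^\perp$ parts (the paper phrases this via the non-oblique pair identity $\Pi_{K_0}=\Pi_{K_0}\circ\Pi_K$ and writes $K\cap K_0^*$, which coincides with your $\tilde K=K\cap K_0^\perp$ since $K_0$ is a subspace), apply the second-order Poincar\'e bound of Proposition~\ref{prop:lrt_clt} to get the $16\sqrt{\delta_K-\delta_{K_0}}/\sigma_0^2$ estimate, and finish with the variance lower bound $\sigma_0^2\geq 2(\delta_K-\delta_{K_0})$ from Lemma~\ref{lem:var_proj}. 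The parenthetical remark attributing a fourth-moment estimate to the second-order Poincar\'e step is a small mischaracterization of Proposition~\ref{prop:lrt_clt}'s proof (which uses Gaussian--Poincar\'e and the operator-norm bound on the Jacobian, not fourth moments), but this does not affect the validity of the argument.
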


It is easy to see from the above bound that under the growth condition $\delta_{K}-\delta_{K_0}\to \infty$, normal approximation of $T(Y)$ holds under the null. This growth condition cannot be improved in general: for a subspace $K$, $T(Y)$ follows a chi-squared distribution with $\delta_K-\delta_{K_0}$ degrees of freedom under the null, so normal approximation holds if and only if $\delta_{K}-\delta_{K_0}\to \infty$. The above theorem extends \cite[Theorem 2.1]{goldstein2017gaussian} in which the case $K_0 = \{0\}$ is treated. Compared to classical results on the chi-bar squared distribution \cite[Corollary 2.2]{dykstra1991asymptotic}, the growth condition here does not require exact knowledge for the mixing weights, and can be easily checked using Gaussian process techniques; see Section \ref{subsec:example_subspace} for examples.

Using Theorem \ref{thm:lrt_clt_pivotal}, we can prove sharp size and power behavior of the LRT; see Theorem~\ref{thm:power_lrt_subspace} below (proved in Section~\ref{pf:power_lrt_subspace}). For $p\geq 1$, let
\begin{align*}
\Gamma_{K,p}(\nu)\equiv \E \pnorm{\Pi_{K}\big(\nu+\xi\big)}{}^p - \E \pnorm{\Pi_{K}(\xi)}{}^p, \quad \nu \in \R^n.
\end{align*}
We simply shorthand $\Gamma_{K,1}$ as $\Gamma_K$ for notational convenience. Recall the definition of $V_K$ in Definition \ref{def:V_K} and that of the polar cone $K^*$ in (\ref{def:polar_cone}).

\begin{theorem}\label{thm:power_lrt_subspace}
	Consider testing (\ref{eq:testing_special}) using the LRT $\Psi(Y;m_0,\sigma_0)$ with the modified LRS $T(Y)$ in (\ref{def:TY_subspace}). There exist constants $C_{\mathcal{A}_\alpha},C_{\mathcal{A}_\alpha}'>0$  such that
	\begin{align}
	\notag&\biggabs{ \E_\mu\Psi(Y;m_0,\sigma_0)-  \Delta_{\mathcal{A}_\alpha}\bigg(\frac{ \Gamma_{K,2}\big(\mu-\Pi_{K_0}(\mu)\big) }{\sigma_0 }\bigg)  }\\
	&\quad \qquad \leq 2\cdot \mathrm{err}_0
	+C_{\mathcal{A}_\alpha} \cdot  \mathscr{L}\bigg( 1\bigwedge\frac{\pnorm{\mu-\Pi_{K_0}(\mu) }{}}{ \bigabs{\Gamma_{K,2}\big(\mu-\Pi_{K_0}(\mu)\big)} \vee \sigma_0  } \bigg) \label{ineq:normal_power_expansion_subspace}\\
	&\quad \qquad \leq C_{\mathcal{A}_\alpha}'\cdot \mathscr{L}\Big( \big(\delta_K-\delta_{K_0}\big)^{-1/4} \Big). \label{ineq:normal_power_expansion_subspace_1}
	\end{align}
	Here $\mathrm{err}_0, \mathscr{L}(\cdot)$ are defined in Theorem \ref{thm:power_lrt_global_testing}. Consequently:
	\begin{enumerate}
		\item For $\mu \in K_0$, the LRT has size $\E_0 \Psi(Y;m_0,\sigma_0)$, where
		\begin{align*}
		\bigabs{\E_0 \Psi(Y;m_0,\sigma_0)-\alpha}\leq \frac{16}{\sqrt{\delta_K-\delta_{K_0}}}.
		\end{align*}
		\item Suppose further $\delta_K - \delta_{K_0}\to \infty$.  Then for $\mu \in K$,
		\begin{align}\label{ineq:normal_power_expansion_subspace_2}
		&\mathcal{L}\bigg(\bigg\{\frac{\Gamma_{K,2}\big(\mu-\Pi_{K_0}(\mu)\big)}{\sigma_0}\bigg\}\bigg) \subset \Delta_{\mathcal{A}_\alpha}^{-1}(\beta) \cap [0,+\infty] \nonumber\\
		\Leftrightarrow \quad & \mathcal{L}\bigg(\bigg\{\frac{2\Gamma_K\big(\mu-\Pi_{K_0}(\mu)\big) }{ \sqrt{2+r(K,K_0)} \sqrt{1-\delta_{K_0}/\delta_K}}\bigg\}\bigg) \subset \Delta_{\mathcal{A}_\alpha}^{-1}(\beta) \cap [0,+\infty] \nonumber\\
		\Leftrightarrow \quad  &\E_\mu \Psi(Y;m_{0},\sigma_{0})\to \beta \in [0,1],
		\end{align}
		where  $r(K,K_0)\equiv \var(V_{K\cap K_0^*})/\delta_{K\cap K_0^*} \in [0,2]$. Hence the LRT is power consistent under $\mu$, i.e., $\E_\mu \Psi(Y;m_0,\sigma_0)\to 1$, if and only if 
		\begin{align}\label{cond:power_lrt_subspace_2}
		&\frac{ \Gamma_{K,2}\big(\mu-\Pi_{K_0}(\mu)\big) }{\big(\delta_K-\delta_{K_0}\big)^{1/2}}\to +\infty\quad \Leftrightarrow \quad \frac{\Gamma_{K}\big(\mu-\Pi_{K_0}(\mu)\big)}{\sqrt{1-\delta_{K_0}/\delta_K}} \to +\infty. 
		\end{align}
	\end{enumerate}		
\end{theorem}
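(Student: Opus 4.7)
The plan is to specialize Theorem~\ref{thm:power_lrt_global_testing} to the subspace-cone setting by exploiting two structural facts: the invariance of Lemma~\ref{lem:invariance_lrt} (available since $K_0$ lies in the lineality space $K\cap(-K)$ of $K$) and the orthogonal decomposition $K=K_0\oplus(K\cap K_0^\perp)$.

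\emph{Reduction and computation of $m_0,\sigma_0,m_\mu$.} For any $\mu_0\in K_0$, both $K_0-\mu_0\subset K_0$ and $K-\mu_0\subset K$ hold, so Lemma~\ref{lem:invariance_lrt} gives $\Pi_{K_0}(\mu_0+\xi)=\mu_0+\Pi_{K_0}(\xi)$ and $\Pi_K(\mu_0+\xi)=\mu_0+\Pi_K(\xi)$. Combining with the orthogonal decomposition $\Pi_K(\xi)=\Pi_{K_0}(\xi)+\Pi_{K\cap K_0^\perp}(\xi)$ and Moreau's theorem, I obtain
\[
T(Y)\stackrel{d}{=}\|\Pi_K(\xi)\|^2-\|\Pi_{K_0}(\xi)\|^2=\|\Pi_{K\cap K_0^*}(\xi)\|^2
\]
under $H_0$, which identifies $m_0=\delta_K-\delta_{K_0}$ and, by Lemma~\ref{lem:var_proj}, $\sigma_0^2=(\delta_K-\delta_{K_0})\bigl(2+r(K,K_0)\bigr)$. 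Next, for $\mu\in K$ set $\nu\equiv\mu-\Pi_{K_0}(\mu)$; the decomposition $K=K_0\oplus(K\cap K_0^\perp)$ forces $\nu\in K\cap K_0^\perp$. Applying Lemma~\ref{lem:invariance_lrt} with the shift $\Pi_{K_0}(\mu)$, using $\Pi_{K_0}(\nu+\xi)=\Pi_{K_0}(\xi)$ (since $\nu\perp K_0$) and Moreau on $K$, a short computation yields $T(Y)\stackrel{d}{=}\|\Pi_K(\nu+\xi)\|^2-\|\Pi_{K_0}(\xi)\|^2$, hence $m_\mu-m_0=\Gamma_{K,2}(\nu)$. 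Theorem~\ref{thm:lrt_clt_pivotal} bounds $\mathrm{err}_0\leq 8/\sqrt{\delta_K-\delta_{K_0}}$, which via Theorem~\ref{thm:power_lrt_global_testing}(1) establishes part~(1). Plugging $\mu_0:=\Pi_{K_0}(\mu)$ into Theorem~\ref{thm:power_lrt_global_testing} together with $\|\mu-\mu_0\|=\|\nu\|$, $m_\mu-m_{\mu_0}=\Gamma_{K,2}(\nu)$, $\sigma_{\mu_0}=\sigma_0$ delivers the bound~\eqref{ineq:normal_power_expansion_subspace}.

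\emph{The main obstacle: the uniform bound~\eqref{ineq:normal_power_expansion_subspace_1}.} One needs $\|\nu\|/(\Gamma_{K,2}(\nu)\vee\sigma_0)\lesssim m^{-1/4}$ uniformly over $\nu\in K\cap K_0^\perp$, where $m\equiv\delta_K-\delta_{K_0}$. The easy regime $\|\nu\|\leq\sqrt{2}\,m^{1/4}$ uses $\sigma_0\geq\sqrt{2m}$. For $\|\nu\|\gtrsim m^{1/4}$, the plan is to establish a quadratic-type lower bound on $\Gamma_{K,2}(\nu)$ through: (a) reducing to the cone $C:=K\cap K_0^\perp$ in the ambient $K_0^\perp$, where $\Gamma_{K,2}(\nu)=\Gamma_{C,2}(\nu)$ and $\delta_C=m$; (b) observing that for $\nu\in C$ Moreau yields $\iprod{\nu}{\Pi_{C^*}(\nu+\xi)}\leq 0$, hence $\iprod{\nu}{\Pi_C(\nu+\xi)}\geq\iprod{\nu}{\nu+\xi}$, so that $\|\Pi_C(\nu+\xi)\|\geq\|\nu\|+\iprod{\nu/\|\nu\|}{\xi}$ on the event the right side is nonnegative; (c) using Gaussian concentration of the $1$-Lipschitz form $\iprod{\nu/\|\nu\|}{\xi}$ to control the failure event, producing $\Gamma_{C,2}(\nu)\gtrsim\|\nu\|^2-O(m)$. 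A case split on $\|\nu\|$ vs.\ $\sqrt{m}$ then yields the required inequality. Combined with monotonicity of $\mathscr{L}$ on $[0,1]$ and $\mathrm{err}_0=\mathcal{O}(m^{-1/2})\ll\mathscr{L}(m^{-1/4})$, this closes~\eqref{ineq:normal_power_expansion_subspace_1}.

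\emph{Equivalence and consistency.} For~\eqref{ineq:normal_power_expansion_subspace_2} I would write
\begin{align*}
\Gamma_{K,2}(\nu)=\bigl(\E\|\Pi_K(\nu+\xi)\|+\E\|\Pi_K(\xi)\|\bigr)\Gamma_K(\nu)+\bigl(\var\|\Pi_K(\nu+\xi)\|-\var\|\Pi_K(\xi)\|\bigr),
\end{align*}
and exploit Gaussian concentration of the $1$-Lipschitz map $\xi\mapsto\|\Pi_K(\xi)\|$ around $\sqrt{\delta_K}$ with $O(1)$ fluctuation to deduce that $\Gamma_{K,2}(\nu)/\sigma_0$ and $2\Gamma_K(\nu)\sqrt{\delta_K}/\sigma_0=2\Gamma_K(\nu)/\sqrt{(2+r)(1-\delta_{K_0}/\delta_K)}$ share the same limit-point set in $[0,+\infty]$, yielding~\eqref{ineq:normal_power_expansion_subspace_2}. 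Since $\Gamma_{K,2}(\nu),\Gamma_K(\nu)\geq 0$ by convexity of $x\mapsto\|\Pi_C(x)\|^k$ ($k\in\{1,2\}$) together with symmetry of $\xi$, the only divergent limit point achievable on $[0,+\infty]$ is $+\infty$, so~\eqref{cond:power_lrt_subspace_2} follows on taking $\beta=1$.
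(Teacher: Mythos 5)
Your overall plan tracks the paper's: identify $m_\mu - m_0 = \Gamma_{K,2}(\mu-\Pi_{K_0}(\mu))$ using Lemma~\ref{lem:invariance_lrt}, bound $\mathrm{err}_0$ via Theorem~\ref{thm:lrt_clt_pivotal}, and for part~(2) expand $\Gamma_{K,2}$ in terms of $\Gamma_K$ using $O(1)$-concentration of $\|\Pi_K(\cdot)\|$. Those pieces are sound. However, there are two genuine gaps.

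First, the step ``plug $\mu_0:=\Pi_{K_0}(\mu)$ into Theorem~\ref{thm:power_lrt_global_testing} with $\sigma_{\mu_0}=\sigma_0$'' is not justified, because the identity $\sigma_{\mu_0}=\sigma_0$ is false. Theorem~\ref{thm:power_lrt_global_testing} is stated for the unmodified LRS~(\ref{def:TY}), under which, by Lemma~\ref{lem:invariance_lrt}, $\sigma_{\mu_0}^2 = \var(\|\Pi_K(\xi)\|^2) = 2\delta_K + \var(V_K)$; whereas the modified LRS~(\ref{def:TY_subspace}) of the present theorem has $\sigma_0^2 = \var(\|\Pi_{K\cap K_0^*}(\xi)\|^2) = 2(\delta_K-\delta_{K_0})+\var(V_{K\cap K_0^*})$, which can be much smaller (e.g.\ when $\delta_{K_0}$ is close to $\delta_K$). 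You therefore cannot transfer the conclusion of Theorem~\ref{thm:power_lrt_global_testing} verbatim; you must redo the concentration estimate (the analogue of Proposition~\ref{prop:concentration_mean_shift}) for the modified statistic. The paper does exactly this: since $\nabla T(y) = 2(\Pi_K(y)-\Pi_{K_0}(y))$ for~(\ref{def:TY_subspace}), the deviation $Z_0(\mu)\equiv T(\mu+\xi)-T(\xi)-(m_\mu-m_0)$ satisfies $\|\nabla_\xi Z_0(\mu)\|\leq 4\|\mu-\Pi_{K_0}(\mu)\|$ (note the factor $4$, not $2$, because both $\Pi_K$ and $\Pi_{K_0}$ contribute), after which the arguments of Theorem~\ref{thm:power_lrt_global_testing} can be replayed.

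Second, your route to the uniform bound~(\ref{ineq:normal_power_expansion_subspace_1}) does not close. Setting $m\equiv\delta_K-\delta_{K_0}$, the Moreau/concentration argument you sketch yields at best $\Gamma_{C,2}(\nu)\gtrsim \|\nu\|^2 - O(m)$ (indeed, the exact lower bound from your inequality $\|\Pi_C(\nu+\xi)\|\geq (\|\nu\|+\langle\nu/\|\nu\|,\xi\rangle)_+$ is $\E\|\Pi_C(\nu+\xi)\|^2\geq \|\nu\|^2\Phi(\|\nu\|)+\|\nu\|\varphi(\|\nu\|)+\Phi(\|\nu\|)$, so $\Gamma_{C,2}(\nu)\geq \|\nu\|^2+O(1)-\delta_C$). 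This bound is negative precisely in the critical window $m^{1/4}\lesssim\|\nu\|\lesssim m^{1/2}$, where the $\vee\,\sigma_0$ term then takes over and
\[
\frac{\|\nu\|}{\Gamma_{C,2}(\nu)\vee\sigma_0} \asymp \frac{\|\nu\|}{\sigma_0}
\]
can be of order $1$ rather than $m^{-1/4}$. What the proof actually needs is the sharp inequality $\Gamma_{K,2}(\nu)\geq\|\nu\|^2$ for all $\nu\in K$, which the paper cites from~\cite[Lemma E.1]{wei2019geometry}. With that bound one has, uniformly over $\nu\in K$,
\[
\frac{\|\nu\|}{\Gamma_{K,2}(\nu)\vee\sigma_0} \leq \frac{1}{\|\nu\|\vee(\sigma_0/\|\nu\|)} \leq \sigma_0^{-1/2} \asymp m^{-1/4},
\]
which closes~(\ref{ineq:normal_power_expansion_subspace_1}). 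Incidentally, the sharp inequality has a short self-contained proof: for $\nu\in K$, convexity of $x\mapsto\|\Pi_{K^*}(x)\|^2$ together with $\Pi_{K^*}(\nu+\xi)\in K^*$ gives $\|\Pi_{K^*}(\nu+\xi)\|^2-\|\Pi_{K^*}(\xi)\|^2\leq 2\langle\Pi_{K^*}(\nu+\xi),\nu\rangle\leq 0$, and then Moreau yields $\|\Pi_K(\nu+\xi)\|^2-\|\Pi_K(\xi)\|^2\geq\|\nu\|^2+2\langle\nu,\xi\rangle$; taking expectations gives $\Gamma_{K,2}(\nu)\geq\|\nu\|^2$.
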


\begin{remark}\label{rmk:subspace_one_sided_LRT}
	\begin{enumerate}
		\item By the proof of~\cite[Lemma E.1]{wei2019geometry}, $\Gamma_{K,2}(\nu)\geq \pnorm{\nu}{}^2\geq 0$ for all $\nu \in K$, so  all the limit points in (\ref{ineq:normal_power_expansion_subspace_2}) are nonnegative. This leads to the equivalence of the power consistency property for the one-sided LRT (\ref{def:LRT_one_sided}) and the two-sided LRT (\ref{def:LRT_two_sided}).
		\item With the help of Lemma \ref{lem:invariance_lrt} and (\ref{eq:lrt_mean_var_special}), which holds for any $\mu\in K_0$, some calculations yield that
		\begin{align}\label{ineq:subspace_quad_lower_bound}
		m_\mu - m_0 = \Gamma_{K,2}(\mu-\Pi_{K_0}(\mu))\geq \pnorm{\mu-\Pi_{K_0}(\mu)}{}^2.
		\end{align} 
		Therefore, the counterpart of the generic condition (\ref{cond:power_lrt_global_testing_0}) under (\ref{eq:testing_special})
		\begin{align*}
		\pnorm{\mu - \Pi_{K_0}(\mu)}{} \ll \abs{m_{\mu} - m_0} \vee \sigma_0
		\end{align*}
		is automatically satisfied due to the global quadratic lower bound (\ref{ineq:subspace_quad_lower_bound}). In particular, (\ref{ineq:normal_power_expansion_subspace_1}) vanishes under the growth condition $\delta_K - \delta_{K_0}\to \infty$.
	\end{enumerate}
\end{remark}

The power behavior of the LRT is characterized using $\Gamma_{K,2}$ and $\Gamma_K$ in Theorem \ref{thm:power_lrt_subspace}. The function $\Gamma_{K,2}$ is usually more amenable to explicit calculations in concrete examples, while the formulation using $\Gamma_K$ allows us to recover the separation rate in $\pnorm{\cdot}{}$ for the LRT derived in \cite{wei2019geometry} in the setting (\ref{eq:testing_special}). We formally state this result below; see Section~\ref{pf:wwg} for a proof.

\begin{corollary}\label{cor:wwg}
	For $\Psi \in \{\Psi_{\mathrm{os}},\Psi_{\mathrm{ts}}\}$, (\ref{cond:power_lrt_subspace_2}) is satisfied for any $\mu \in K$ such that 
	\begin{align}\label{cond:wwg}
	\pnorm{\mu-\Pi_{K_0}(\mu)}{} \gg \delta_{K}^{1/4} \bigwedge \bigg(\frac{ \delta_{K}^{1/2} }{ 0 \bigvee \inf_{\eta \in K\cap B(1)} \iprod{\eta}{\E \Pi_{K}(\xi)} }\bigg).
	\end{align}
\end{corollary}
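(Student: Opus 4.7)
The plan is to derive the divergence condition~(\ref{cond:power_lrt_subspace_2}) from the separation hypothesis~(\ref{cond:wwg}) by combining two complementary lower bounds on $\Gamma_{K,2}(\nu)$, where $\nu \equiv \mu - \Pi_{K_0}(\mu)$. By Theorem~\ref{thm:power_lrt_subspace}-(2), power consistency of the LRT is equivalent to $\Gamma_{K,2}(\nu)/(\delta_K - \delta_{K_0})^{1/2} \to +\infty$; since $(\delta_K - \delta_{K_0})^{1/2} \leq \delta_K^{1/2}$, it suffices to verify $\Gamma_{K,2}(\nu)/\delta_K^{1/2} \to +\infty$. Before proceeding, I check that $\nu \in K$: because $K_0 \subset K$ is a subspace and $\Pi_{K_0}(\mu) \in K_0$, we have $-\Pi_{K_0}(\mu) \in K_0 \subset K$, and therefore $\nu = \mu + (-\Pi_{K_0}(\mu)) \in K$ by closure of the convex cone $K$ under addition.

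The two lower bounds on $\Gamma_{K,2}(\nu)$ for $\nu \in K$ are as follows. The first is the \emph{quadratic} bound $\Gamma_{K,2}(\nu) \geq \pnorm{\nu}{}^2$, already noted in Remark~\ref{rmk:subspace_one_sided_LRT}-(1) via \cite[Lemma E.1]{wei2019geometry}. The second is the \emph{linear} bound $\Gamma_{K,2}(\nu) \geq 2\pnorm{\nu}{}\,\kappa_K$, where $\kappa_K \equiv 0 \vee \inf_{\eta \in K \cap B(1)} \iprod{\eta}{\E \Pi_K(\xi)}$, which I would derive as follows. The map $z \mapsto \pnorm{\Pi_K(z)}{}^2/2$ equals half the squared distance from $z$ to the polar cone $K^*$ (by Moreau's theorem), and is therefore convex with gradient $\Pi_K(z)$. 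The first-order convexity inequality $\pnorm{\Pi_K(\nu+\xi)}{}^2/2 \geq \pnorm{\Pi_K(\xi)}{}^2/2 + \iprod{\Pi_K(\xi)}{\nu}$ then yields, upon taking expectations, $\Gamma_{K,2}(\nu) \geq 2\iprod{\E \Pi_K(\xi)}{\nu}$. Writing $\nu = \pnorm{\nu}{}\cdot(\nu/\pnorm{\nu}{})$ with $\nu/\pnorm{\nu}{} \in K \cap B(1)$ and minimizing over $\eta \in K \cap B(1)$, combined with the trivial bound $\Gamma_{K,2}(\nu) \geq 0$, gives the linear bound.

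Combining the two bounds produces
\[
\frac{\Gamma_{K,2}(\nu)}{\delta_K^{1/2}} \geq \max\bigg(\frac{\pnorm{\nu}{}^2}{\delta_K^{1/2}},\ \frac{2\pnorm{\nu}{}\,\kappa_K}{\delta_K^{1/2}}\bigg).
\]
Setting $a \equiv \pnorm{\nu}{}/\delta_K^{1/4}$ and $b \equiv \pnorm{\nu}{}\,\kappa_K/\delta_K^{1/2}$, the hypothesis~(\ref{cond:wwg}) is precisely $\max(a, b) \to +\infty$ along the sequence of problems (using the convention $\delta_K^{1/2}/0 = +\infty$ when $\kappa_K = 0$). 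Since $a_n, b_n \geq 0$, for any $M > 1$ one has eventually $a_n > M$ or $b_n > M$, which gives $a_n^2 > M$ or $2b_n > M$, whence $\max(a_n^2, 2b_n) \to +\infty$. Therefore $\Gamma_{K,2}(\nu)/\delta_K^{1/2} \to +\infty$, establishing~(\ref{cond:power_lrt_subspace_2}) and completing the proof. The main non-routine step is the convexity-based derivation of the linear lower bound; once it is in hand, the remainder is elementary manipulation.
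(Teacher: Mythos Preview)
Your proof is correct and takes a genuinely different route from the paper's. The paper works with $\Gamma_K=\Gamma_{K,1}$ and establishes the slightly stronger conclusion $\Gamma_K(\nu)\to\infty$ by invoking two technical lower bounds on $\Gamma_{K,1}$ from \cite[(74a)--(74b)]{wei2019geometry}, followed by a case analysis that separately treats the regimes $\pnorm{\nu}{}\geq L_n\delta_K^{1/4}$ and $\pnorm{\nu}{}<L_n\delta_K^{1/4}$ (with a further sub-case split in the latter). By contrast, you work directly with $\Gamma_{K,2}$ and derive the linear lower bound $\Gamma_{K,2}(\nu)\geq 2\langle \E\Pi_K(\xi),\nu\rangle$ from the convexity of $z\mapsto \tfrac{1}{2}\pnorm{\Pi_K(z)}{}^2=\tfrac{1}{2}\mathrm{dist}(z,K^*)^2$ and its gradient $\Pi_K(z)$; this is self-contained and avoids any appeal to \cite{wei2019geometry}. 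Combined with the quadratic bound $\Gamma_{K,2}(\nu)\geq\pnorm{\nu}{}^2$, the remainder is a clean $\max$-of-two-terms argument with no case analysis. Your approach is more elementary and transparent; the paper's approach yields the marginally stronger intermediate statement $\Gamma_{K,1}(\nu)\to\infty$, but that extra strength is not needed for the corollary.
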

Below we give a detailed comparison of (\ref{cond:power_lrt_subspace_2}) and its sufficient condition (\ref{cond:wwg}) due to \cite{wei2019geometry}:

\begin{itemize}
	\item (\textit{Optimality}) By \cite{wei2019geometry}, condition (\ref{cond:wwg}) cannot be further improved in the \emph{worst} case in the sense that for every fixed pair $(K_0, K)$, there exists some $\mu\in K$ violating (\ref{cond:wwg}) that invalidates (\ref{cond:power_lrt_subspace_2}). Furthermore, the same work also shows that the uniform $\|\cdot\|$-separation rate in (\ref{cond:wwg}) is minimax optimal in many cone testing problems.
	\item (\textit{Non-uniform power}) On the other hand, it is important to mention that (\ref{cond:power_lrt_subspace_2}) is not equivalent to (\ref{cond:wwg}). In fact, as we will see in the example of testing $0$ versus the orthant cone $K_+$ and the product circular cone $K_{\times,\alpha}$ (to be detailed in Corollary \ref{cor:lrt_clt_orthant} and Theorem \ref{thm:lrt_clt_global_testing_circular}), the worst case condition (\ref{cond:wwg}) in terms of a separation in $\pnorm{\cdot}{}$ is too conservative: condition (\ref{cond:power_lrt_subspace_2}) allows natural configurations of $\mu \in \{K_+,K_{\times,\alpha}\}$ whose separation rate in $\pnorm{\cdot}{}$ can be $n^\delta$ for any $\delta \in (0,1/4)$, while (\ref{cond:wwg}) necessarily requires a separation rate in $\pnorm{\cdot}{}$ of order at least $n^{1/4}$. Therefore, although (\ref{cond:wwg}) gives the best possible inversion of (\ref{cond:power_lrt_subspace_2}) in terms of uniform separation in $\pnorm{\cdot}{}$, condition (\ref{cond:power_lrt_subspace_2}) can be much weaker than (\ref{cond:wwg}), and characterizes the non-uniform power behavior of the LRT. 
\end{itemize}

To give a better sense of the results in Theorem \ref{thm:power_lrt_subspace}, we consider a toy example where $K$ is also a subspace. 
\begin{proposition}\label{prop:power_lrt_subspace_toy}
	Let $\Psi \in \{\Psi_{\mathrm{os}},\Psi_{\mathrm{ts}}\}$. Suppose $\delta_{K} - \delta_{K_0} \to \infty$. 
	\begin{enumerate}
		\item If $\mu \in K_0$, the LRT is asymptotically size $\alpha$ with $\E_\mu \Psi(Y;m_0,\sigma_0) = \alpha+\mathcal{O}\big(\big(\delta_{K} - \delta_{K_0}\big)^{-1/2}\big)$.
		\item For $\mu \in K$, the LRT is power consistent under $\mu$, i.e., $\E_\mu \Psi(Y;m_0,\sigma_0)\to 1$, if and only if $\pnorm{\mu-\Pi_{K_0}(\mu)}{}\gg \big(\delta_{K} - \delta_{K_0}\big)^{1/4}$.
	\end{enumerate}
\end{proposition}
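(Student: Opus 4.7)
The plan is to specialize Theorem \ref{thm:power_lrt_subspace} to the case where $K$ is itself a subspace, so that the abstract quantities $\Gamma_{K,2}$ and $\delta_K$ collapse to explicit expressions. For part (1), since $\mu\in K_0$ corresponds to the null regime, the size control follows directly from Theorem \ref{thm:power_lrt_subspace}-(1), which yields $\bigabs{\E_\mu\Psi(Y;m_0,\sigma_0)-\alpha}\leq 16/\sqrt{\delta_K-\delta_{K_0}}$; the assumption $\delta_K-\delta_{K_0}\to\infty$ then gives the claimed $\alpha+\mathcal{O}\big((\delta_K-\delta_{K_0})^{-1/2}\big)$ size. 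This is identical for both $\Psi_{\mathrm{os}}$ and $\Psi_{\mathrm{ts}}$ since the size bound is stated uniformly over acceptance regions.

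For part (2), the main task is to evaluate the quantity $\Gamma_{K,2}(\mu-\Pi_{K_0}(\mu))$ that appears in the power consistency condition~(\ref{cond:power_lrt_subspace_2}). When $K$ is a subspace, the projection $\Pi_K$ is a linear orthogonal projection $P$, so $\E\Pi_K(\xi)=0$ and $\E\pnorm{\Pi_K(\xi)}{}^2=\tr(P)=\dim(K)=\delta_K$. For any $\mu\in K$, the vector $\nu\equiv \mu-\Pi_{K_0}(\mu)$ lies in $K$ because $K_0\subset K$ and $K$ is closed under addition, hence $\Pi_K(\nu+\xi)=\nu+P\xi$. A direct computation gives
\begin{align*}
\E\pnorm{\Pi_K(\nu+\xi)}{}^2=\pnorm{\nu}{}^2+2\iprod{\nu}{\E P\xi}+\E\pnorm{P\xi}{}^2=\pnorm{\nu}{}^2+\delta_K,
\end{align*}
so that $\Gamma_{K,2}(\nu)=\pnorm{\nu}{}^2$.

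Substituting into the first equivalence of~(\ref{cond:power_lrt_subspace_2}), the LRT is power consistent under $\mu\in K$ if and only if
\begin{align*}
\frac{\pnorm{\mu-\Pi_{K_0}(\mu)}{}^2}{\sqrt{\delta_K-\delta_{K_0}}}\to+\infty,
\end{align*}
which is equivalent to $\pnorm{\mu-\Pi_{K_0}(\mu)}{}\gg (\delta_K-\delta_{K_0})^{1/4}$. The equivalence between power consistency of $\Psi_{\mathrm{os}}$ and $\Psi_{\mathrm{ts}}$ is covered by Remark \ref{rmk:subspace_one_sided_LRT}-(1), since $\Gamma_{K,2}(\nu)=\pnorm{\nu}{}^2\geq 0$ places all limit points on the nonnegative real line.

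There is no substantial obstacle here, as the proposition is designed as a sanity check specializing Theorem \ref{thm:power_lrt_subspace} to the classical chi-squared LRT setting. The only computational step is verifying the closed form $\Gamma_{K,2}(\nu)=\pnorm{\nu}{}^2$, which reduces to linearity of the subspace projection and the identity $\E\pnorm{P\xi}{}^2=\dim(K)$; after that, the rest is algebraic rearrangement of~(\ref{cond:power_lrt_subspace_2}).
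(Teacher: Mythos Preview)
Your proposal is correct and follows essentially the same route as the paper: both invoke Theorem~\ref{thm:power_lrt_subspace}-(1) for size, and for power both compute $\Gamma_{K,2}(\mu-\Pi_{K_0}(\mu))=\pnorm{\mu-\Pi_{K_0}(\mu)}{}^2$ using linearity of the subspace projection and then substitute into condition~(\ref{cond:power_lrt_subspace_2}). The paper additionally records $\sigma_0^2=2(\delta_K-\delta_{K_0})$ via Lemma~\ref{lem:var_proj}-(3), but since (\ref{cond:power_lrt_subspace_2}) is already phrased in terms of $\delta_K-\delta_{K_0}$, your omission of this step is harmless.
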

\begin{proof}
	(1) is a direct consequence of Theorem \ref{thm:power_lrt_subspace}-(1). (2) follows from Theorem \ref{thm:power_lrt_subspace}-(2) upon noting that
	\begin{align*}
	&\Gamma_{K,2}(\mu-\Pi_{K_0}(\mu))=\E \pnorm{\Pi_{K}\big(\mu-\Pi_{K_0}(\mu)+\xi\big)}{}^2 - \E \pnorm{\Pi_{K}(\xi)}{}^2 = \pnorm{\mu-\Pi_{K_0}(\mu)}{}^2,\\
	&\sigma_0^2 = \var \big( \pnorm{\Pi_{K\cap K_0^\ast}(\xi)}{}^2\big)=2\delta_{K\cap K_0^\ast} = 2(\delta_{K}-\delta_{K_0}).
	\end{align*}
	The second line of the above display uses Lemma \ref{lem:var_proj}-(3).
\end{proof}

More examples on testing parametric assumptions versus shape-constrained alternatives will be detailed in Section \ref{section:example}.

\section{Examples}\label{section:example}

This section is organized as follows. Sections \ref{subsec:example_orthant}-\ref{subsec:example_lasso} study the generic testing problem (\ref{eq:testing_generic}) in the context of orthant/circular cones, isotonic regression, and Lasso, respectively. Section \ref{subsec:example_subspace} specializes the subspace versus cone testing problem (\ref{eq:testing_special}) to the setting of testing parametric assumptions versus shape-constrained alternatives. For simplicity of presentation, we will focus on the two-sided LRT (\ref{def:LRT_two_sided}), and simply call it the LRT unless otherwise specified.

\subsection{Testing in orthant cone}\label{subsec:example_orthant}

Consider the orthant cone $$K_{+}\equiv \left\{\nu =(\nu_1,\ldots, \nu_n) \in \R^n: \nu_i\geq 0, i \in [1:n]\right\}.$$ We are interested in the testing problem (\ref{eq:testing_generic}) with $K = K_+$. Testing in the orthant cone has previously been studied by \cite{kudo1963multivariate,raubertas1986hypothesis,wei2019geometry}. The following result (see Section~\ref{pf:lrt_clt_global_testing_orthant} for a proof) gives the limiting distribution of the LRS and characterizes the power behavior of the LRT in this example.

\begin{theorem}\label{thm:lrt_clt_global_testing_orthant}
	\begin{enumerate}
		\item There exists a universal constant $C>0$ such that for $\mu_0 \in K_+$, 
		\begin{align*}
		d_{\mathrm{TV}}\bigg( \frac{T(Y)-m_{\mu_0}}{ \sigma_{\mu_0} },\mathcal{N}(0,1)\bigg)&\leq \frac{C}{\sqrt{n}}.
		\end{align*}
		Consequently the  LRT is asymptotically size $\alpha$ with $\E_{\mu_0} \Psi_{\mathrm{ts}}(Y;m_{\mu_0},\sigma_{\mu_0}) =\alpha + \mathcal{O}(n^{-1/2})$.
		\item \sloppy For any $\mu \in K_+$, the LRT is power consistent under $\mu$, i.e., $\E_{\mu} \Psi_{\mathrm{ts}}(Y;m_{\mu_0},\sigma_{\mu_0}) \to 1$, if and only if 
		\begin{align*}
		\biggabs{ \sum_{i=1}^n \big\{\bar{S}_+(\mu_i)-\bar{S}_+((\mu_0)_i)\big\}+\pnorm{\mu-\mu_0}{}^2}\gg n^{1/2}.
		\end{align*}
		Here, $\bar{S}_+$ is an increasing, concave and bounded function on $[0,\infty)$ with $\bar{S}_+(0)=0$ and defined as
		\begin{align}\label{def:S_+}
		\bar{S}_+(x)\equiv\Phi(x)+x\varphi(x)-x^2(1- \Phi(x)) -\frac{1}{2},\quad x\geq 0.
		\end{align}
	\end{enumerate}
\end{theorem}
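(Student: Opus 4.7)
The plan is to apply Theorem~\ref{thm:lrt_clt_global_testing} for part (1) and Theorem~\ref{thm:power_lrt_global_testing}-(2) for part (2), exploiting the product structure of $K_+$: $\Pi_{K_+}(y)=(y_1^+,\ldots,y_n^+)$ with diagonal Jacobian $\big(J_{\hat{\mu}_{K_+}}(\xi)\big)_{ii}=\bm{1}(\mu_{0,i}+\xi_i\geq 0)$. Under $\mu_0\in K_+$ the LRS decouples as a sum of $n$ independent one-dimensional terms,
\[
T(\mu_0+\xi)=\sum_{i=1}^n\big[\xi_i^2-((\mu_{0,i}+\xi_i)^-)^2\big],
\]
and this one-dimensional reduction drives all subsequent computations.

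For part (1), a direct coordinatewise computation bounds each summand of $\E_{\mu_0}\pnorm{\hat{\mu}_{K_+}-\mu_0}{}^2$ by $1$ (using Mills' inequality to offset the $\mu_{0,i}^2(1-\Phi(\mu_{0,i}))$-contribution), giving a numerator of order $\sqrt{n}$ in (\ref{ineq:lrt_clt_global_testing}). For the denominator, the diagonal Jacobian yields $\pnorm{\E_{\mu_0} J_{\hat{\mu}_{K_+}}}{F}^2=\sum_i\Phi(\mu_{0,i})^2\geq n/4$, since $\mu_{0,i}\geq 0$ forces $\Phi(\mu_{0,i})\geq 1/2$. The $\mathcal{O}(1/\sqrt{n})$ bound then follows from Theorem~\ref{thm:lrt_clt_global_testing}, and the size claim from Theorem~\ref{thm:power_lrt_global_testing}-(1).

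For part (2), I first derive the explicit formula for $m_\mu-m_{\mu_0}$. Taking expectations in the analogue of the previous display under $\mu$ and using the elementary Gaussian identity $\E[((x+\xi_1)^-)^2]=(x^2+1)(1-\Phi(x))-x\varphi(x)$ for $x\geq 0$ produces
\[
m_\mu-m_{\mu_0}=\pnorm{\mu-\mu_0}{}^2+\sum_{i=1}^n\big[\bar{S}_+(\mu_i)-\bar{S}_+(\mu_{0,i})\big].
\]
A direct computation shows $\bar{S}_+'(x)=2\varphi(x)-2x(1-\Phi(x))\geq 0$ on $[0,\infty)$ (by Mills), and together with $\bar{S}_+(0)=0$ and $\bar{S}_+(+\infty)=1/2$ confirms the stated monotonicity, concavity and boundedness. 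Next, by independence $\sigma_{\mu_0}^2=\sum_i\var\big(f_{\mu_{0,i}}(\xi_i)\big)$ with $f_a(\xi)\equiv\xi^2-((a+\xi)^-)^2$; a continuity-plus-limit argument (verifying $\var(f_0(\xi))>0$ directly and $\var(f_a(\xi))\to\var(\xi^2)=2$ as $a\to\infty$) gives $\sigma_{\mu_0}^2\asymp n$ uniformly in $\mu_0\in K_+$.

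Finally, to invoke Theorem~\ref{thm:power_lrt_global_testing}-(2) I verify the regularity condition (\ref{cond:power_lrt_global_testing_0}): when $\pnorm{\mu-\mu_0}{}\gg\sqrt{n}$ the quadratic term dominates the bounded correction $\big|\sum_i[\bar{S}_+(\mu_i)-\bar{S}_+(\mu_{0,i})]\big|\leq n/2$, so $m_\mu-m_{\mu_0}\asymp\pnorm{\mu-\mu_0}{}^2\gg\pnorm{\mu-\mu_0}{}$, while when $\pnorm{\mu-\mu_0}{}=o(\sqrt{n})\asymp\sigma_{\mu_0}$ the condition is automatic. Combining Theorem~\ref{thm:power_lrt_global_testing}-(2) with $\sigma_{\mu_0}\asymp\sqrt{n}$ then delivers the stated iff. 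The main technical obstacles I expect are (a) the uniform variance lower bound $\sigma_{\mu_0}^2\gtrsim n$ over the unbounded range $\mu_{0,i}\in[0,\infty)$, which requires careful small- and large-$\mu_{0,i}$ asymptotics, and (b) the critical regime $\pnorm{\mu-\mu_0}{}\asymp\sqrt{n}$ where (\ref{cond:power_lrt_global_testing_0}) may fail due to near-cancellation between $\pnorm{\mu-\mu_0}{}^2$ and the $\bar{S}_+$ correction, so the $\Rightarrow$ direction of the iff needs a direct concentration argument for $T(Y)-m_{\mu_0}$ under $\mu$ to preclude power consistency.
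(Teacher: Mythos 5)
Your approach matches the paper's for part (1) and for most of part (2): you reduce to coordinatewise Gaussian computations, obtain $\pnorm{\E_{\mu_0} J_{\hat{\mu}_{K_+}}}{F}^2 = \sum_i \Phi\big((\mu_0)_i\big)^2 \geq n/4$ and $\E_{\mu_0}\pnorm{\hat{\mu}_{K_+}-\mu_0}{}^2 \asymp n$, derive the formula $m_\mu - m_{\mu_0} = \pnorm{\mu-\mu_0}{}^2 + \sum_i\big[\bar{S}_+(\mu_i)-\bar{S}_+((\mu_0)_i)\big]$ together with the monotonicity, concavity and boundedness of $\bar{S}_+$, and argue $\sigma_{\mu_0}^2\asymp n$ by the coordinate decoupling. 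These computations all check out (the constant $\leq 1$ per summand of $\E_{\mu_0}\pnorm{\hat{\mu}_{K_+}-\mu_0}{}^2$ is really $\leq 1+\sup_{x>0}x^2(1-\Phi(x))$, but the order $\asymp n$ is what matters).

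The one genuine gap is the critical regime $\pnorm{\mu-\mu_0}{}\asymp n^{1/2}$, which you flag in your last sentence but do not resolve. Your earlier claim that "Combining Theorem~\ref{thm:power_lrt_global_testing}-(2) with $\sigma_{\mu_0}\asymp\sqrt{n}$ then delivers the stated iff" overreaches: Theorem~\ref{thm:power_lrt_global_testing}-(2) is only applicable under condition \eqref{cond:power_lrt_global_testing_0}, which you verify only in the regimes $\pnorm{\mu-\mu_0}{}\ll n^{1/2}$ and $\gg n^{1/2}$, and you then acknowledge the remaining regime as an open obstacle. The paper closes it by going back to the decomposition \eqref{ineq:decomposition_power_lrt_generic}: write $T(\mu+\xi) - m_{\mu_0} = \big[T(\mu+\xi)-T(\mu_0+\xi)\big] + \big[T(\mu_0+\xi)-m_{\mu_0}\big]$. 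The first bracket concentrates around $m_\mu-m_{\mu_0}$ with sub-Gaussian fluctuations of scale $\pnorm{\mu-\mu_0}{}\asymp\sigma_{\mu_0}$ (this is Proposition~\ref{prop:concentration_mean_shift}), while the second bracket is $\sigma_{\mu_0}\cdot\big(\mathcal{N}(0,1)+\mathfrak{o}_{\mathbf{P}}(1)\big)$ by part (1). So both stochastic terms are $\mathcal{O}_{\mathbf{P}}(\sigma_{\mu_0})$ in this regime, and the normalized LRS escapes every fixed interval in probability iff the deterministic shift $(m_\mu-m_{\mu_0})/\sigma_{\mu_0}$ diverges, which is precisely $\bigabs{\sum_i\{\bar{S}_+(\mu_i)-\bar{S}_+((\mu_0)_i)\}+\pnorm{\mu-\mu_0}{}^2}\gg n^{1/2}$. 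Note this argument is needed for \emph{both} directions of the iff in the critical regime, not just for precluding power consistency; your proposal correctly identifies the tool (Proposition~\ref{prop:concentration_mean_shift}-type concentration) but must actually carry it out to complete the proof.
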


Let us further investigate the special case $\mu_0=0$ to illustrate the non-uniform power behavior of the LRT mentioned after Theorem \ref{thm:power_lrt_subspace}. In other words, we consider testing $\mu = 0$ versus the orthant cone $K_+$. Let 
\begin{align*}
S_+(x)\equiv \bar{S}_+(x)+x^2 = \Phi(x)+x \varphi(x)+x^2 \Phi(x) - \frac{1}{2},\quad x\geq 0.
\end{align*}
As $S_+'(x) = 2\big[\varphi(x)+x \Phi(x)\big]$, $S_+'(0)=2\varphi(0)>0$, and $S_+''(x)= 2\Phi(x)\geq 0$, $S_+$ is a strictly increasing and convex function on $[0,\infty)$ with $S_+(0)=0$. Furthermore, it can be verified via direct calculation that uniformly over $x \geq 0$, $S_+(x) \asymp x \vee x^2$. Theorem \ref{thm:lrt_clt_global_testing_orthant} immediately yields the following corollary.

\begin{corollary}\label{cor:lrt_clt_orthant}
	\begin{enumerate}
		\item \sloppy For $\mu=0$, the  LRT is asymptotically size $\alpha$ with $\E_0 \Psi_{\mathrm{ts}}(Y;{m}_0,{\sigma}_0) = \alpha+\mathcal{O}(n^{-1/2})$. 
		\item  For $\mu \in K_+$, the  LRT is power consistent under $\mu$, i.e., $\E_\mu \Psi_{\mathrm{ts}}(Y;{m}_0,{\sigma}_0)\to 1$, if and only if $\pnorm{\mu}{1}\vee \pnorm{\mu}{}^2\gg n^{1/2}$.
	\end{enumerate}
\end{corollary}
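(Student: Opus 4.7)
The plan is to obtain Corollary \ref{cor:lrt_clt_orthant} as a direct specialization of Theorem \ref{thm:lrt_clt_global_testing_orthant} at $\mu_0 = 0$, followed by an elementary comparison between $\sum_i S_+(\mu_i)$ and the quantity $\pnorm{\mu}{1}\vee \pnorm{\mu}{}^2$. The size statement (1) is immediate from Theorem \ref{thm:lrt_clt_global_testing_orthant}(1), since specializing $\mu_0 = 0$ yields $\abs{\E_0 \Psi_{\mathrm{ts}}(Y;m_0,\sigma_0) - \alpha} \lesssim n^{-1/2}$ by the total variation bound together with the fact that the Kolmogorov distance is controlled by the TV distance.

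For the power statement (2), I would first plug $\mu_0 = 0$ into the characterization in Theorem \ref{thm:lrt_clt_global_testing_orthant}(2). A quick check of the definition of $\bar{S}_+$ gives $\bar{S}_+(0) = \Phi(0) - 1/2 = 0$, so the condition simplifies to $\sum_{i=1}^n \bar{S}_+(\mu_i) + \pnorm{\mu}{}^2 \gg n^{1/2}$. Using the definition $S_+(x) = \bar{S}_+(x) + x^2$ and the fact that $\mu_i \geq 0$ (so all terms are nonnegative, allowing the absolute value to be dropped), this further reduces to $\sum_{i=1}^n S_+(\mu_i) \gg n^{1/2}$. So the proof reduces to showing
\begin{equation*}
\sum_{i=1}^n S_+(\mu_i) \asymp \pnorm{\mu}{1} \vee \pnorm{\mu}{}^2
\end{equation*}
whenever $\mu \in K_+$.

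The remaining step is the claim $S_+(x) \asymp x \vee x^2$ on $[0,\infty)$, which is already stated in the text preceding the corollary and which I would verify by noting $S_+'(x) = 2(\varphi(x) + x\Phi(x))$ and $S_+(0) = 0$: for $x \in [0,1]$, a Taylor expansion around $0$ combined with $S_+'(0) = 2\varphi(0) > 0$ gives $S_+(x) \asymp x$, while for $x \geq 1$, the dominant term $x^2 \Phi(x) \asymp x^2$ (with the other terms $\Phi(x), x\varphi(x), 1/2$ all being $\mathcal{O}(x)$ or bounded) gives $S_+(x) \asymp x^2$. Summing over $i$ then yields $\sum_i S_+(\mu_i) \asymp \sum_i (\mu_i \vee \mu_i^2)$, and one easily checks that for $\mu_i \geq 0$,
\begin{equation*}
\tfrac{1}{2}(\mu_i + \mu_i^2) \leq \mu_i \vee \mu_i^2 \leq \mu_i + \mu_i^2,
\end{equation*}
so $\sum_i (\mu_i \vee \mu_i^2) \asymp \pnorm{\mu}{1} + \pnorm{\mu}{}^2 \asymp \pnorm{\mu}{1} \vee \pnorm{\mu}{}^2$, completing the proof. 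There is no real obstacle here, as all work has been done in Theorem \ref{thm:lrt_clt_global_testing_orthant}; the only care needed is the elementary equivalence $\sum_i (\mu_i \vee \mu_i^2) \asymp \pnorm{\mu}{1} \vee \pnorm{\mu}{}^2$, which relies crucially on the sign constraint $\mu \in K_+$.
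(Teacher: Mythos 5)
Your proof is correct and matches the paper's (implicit) reasoning: the corollary is indeed an immediate specialization of Theorem \ref{thm:lrt_clt_global_testing_orthant} at $\mu_0=0$, with the key reduction $\sum_i \bar{S}_+(\mu_i)+\pnorm{\mu}{}^2=\sum_i S_+(\mu_i)\asymp \pnorm{\mu}{1}\vee\pnorm{\mu}{}^2$ following from the paper's stated equivalence $S_+(x)\asymp x\vee x^2$ on $[0,\infty)$ and the nonnegativity of $\mu\in K_+$. You correctly verify $S_+(x)\asymp x\vee x^2$ and correctly drop the absolute value using $\bar{S}_+\geq 0$ on $[0,\infty)$, so nothing is missing.
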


The results in \cite[Section 3.1.5]{wei2019geometry}, or equivalently, condition (\ref{cond:wwg}) show that the type II error of an optimally calibrated LRT vanishes uniformly for $\mu \in K_+$ such that $\pnorm{\mu}{}\gg n^{1/4}$. Our results above indicate that the regime where the LRT has asymptotic power 1, for the orthant cone $K_+$, is actually characterized by the condition $\|\mu\|_1\vee \|\mu\|^2 \gg n^{1/2}$ and is hence non-uniform with respect to $\pnorm{\cdot}{}$. We give two concrete examples below.
\begin{example}\label{example:orthant}
	Let $q \in (0,1/2)$ and $\tau_1,\tau_2>0$ be two fixed positive constants. Consider the following alternatives: (1) $\mu=(\tau_1 n^{-q})  \bm{1}_n\in K_+$, and (2) $\mu = (\tau_2 i^{-q})_{i=1}^n \in K_+$.  In both cases, $\pnorm{\mu}{1}\asymp n^{1-q}$ and $\pnorm{\mu}{}^2 \asymp n^{1-2q}$. The above corollary then yields that the LRT is power consistent under $\mu$ if and only if $q \in (0,1/2)$, while the characterization of \cite{wei2019geometry} guarantees power consistency of the LRT only for $q \in (0,1/4)$. In particular, as $q\rightarrow 1/2$, the LRT is power consistent for certain alternative $\mu$ with $\pnorm{\mu}{}\asymp n^\delta$ for any $\delta > 0$. See Section \ref{subsubsec:simulation_orthant} ahead for some simulation evidence.
\end{example}

One may further wonder whether the above examples only highlight `exceptional' alternatives in the regime where the uniform separation in $\pnorm{\cdot}{}$ fails to be informative, i.e., with $M_n \equiv \{ \mu \in K_+: \pnorm{\mu}{}^2\leq Cn^{1/2}\}$ for some large enough absolute constant $C>0$, whether the above examples only constitute a small fraction of $M_n$. To this end, let $A_n\equiv \{\mu \in M_n: \pnorm{\mu}{1}\vee \pnorm{\mu}{}^2\geq Cn^{1/2}\}$ be the region in $M_n$ in which the LRT is indeed powerful. By a standard volumetric calculation, it is easy to see that $A_n/M_n \to 1$. In other words, the LRT is indeed powerful for `most' alternatives in the region where the uniform separation in $\pnorm{\cdot}{}$ is not informative as $n \to \infty$. Hence the non-uniform characterization in Corollary \ref{cor:lrt_clt_orthant}-(2) is essential for determining whether the LRT is powerful for a given alternative $\mu \in K_+$ in the regime $\pnorm{\mu}{}=\mathcal{O}(n^{1/4})$.

As the separation rate $n^{1/4}$ in $\pnorm{\cdot}{}$ is minimax optimal for testing $0$ versus $K_+$ (cf.~\cite[Proposition 1]{wei2019geometry}), the discussion above also illustrates the conservative nature of the minimax formulation in this testing problem.

\subsubsection{An illustrative simulation study}\label{subsubsec:simulation_orthant}

\begin{figure}[t]
	\begin{minipage}[b]{0.45\textwidth}
		\includegraphics[width=\textwidth]{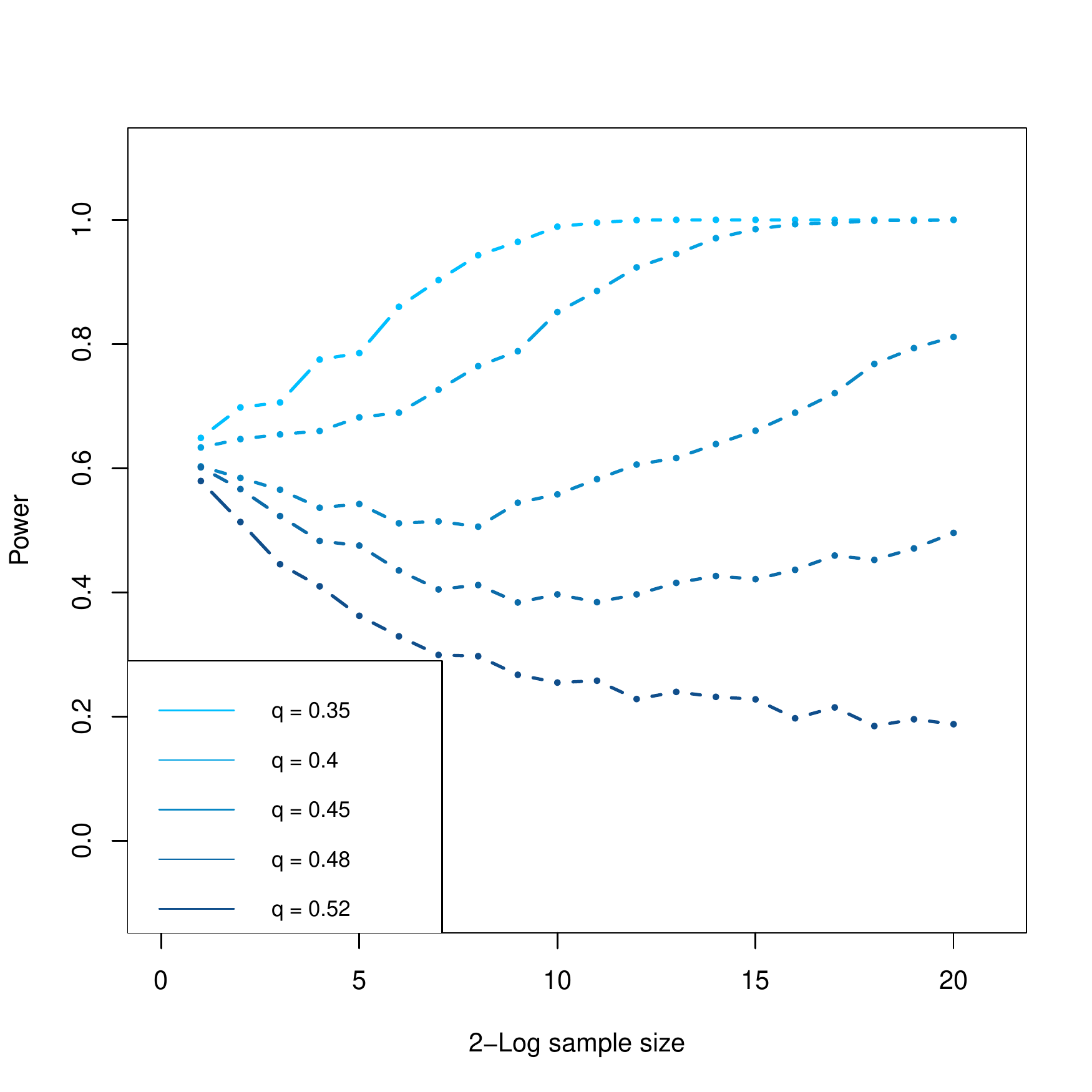}
	\end{minipage}
	\begin{minipage}[b]{0.45\textwidth}
		\includegraphics[width=\textwidth]{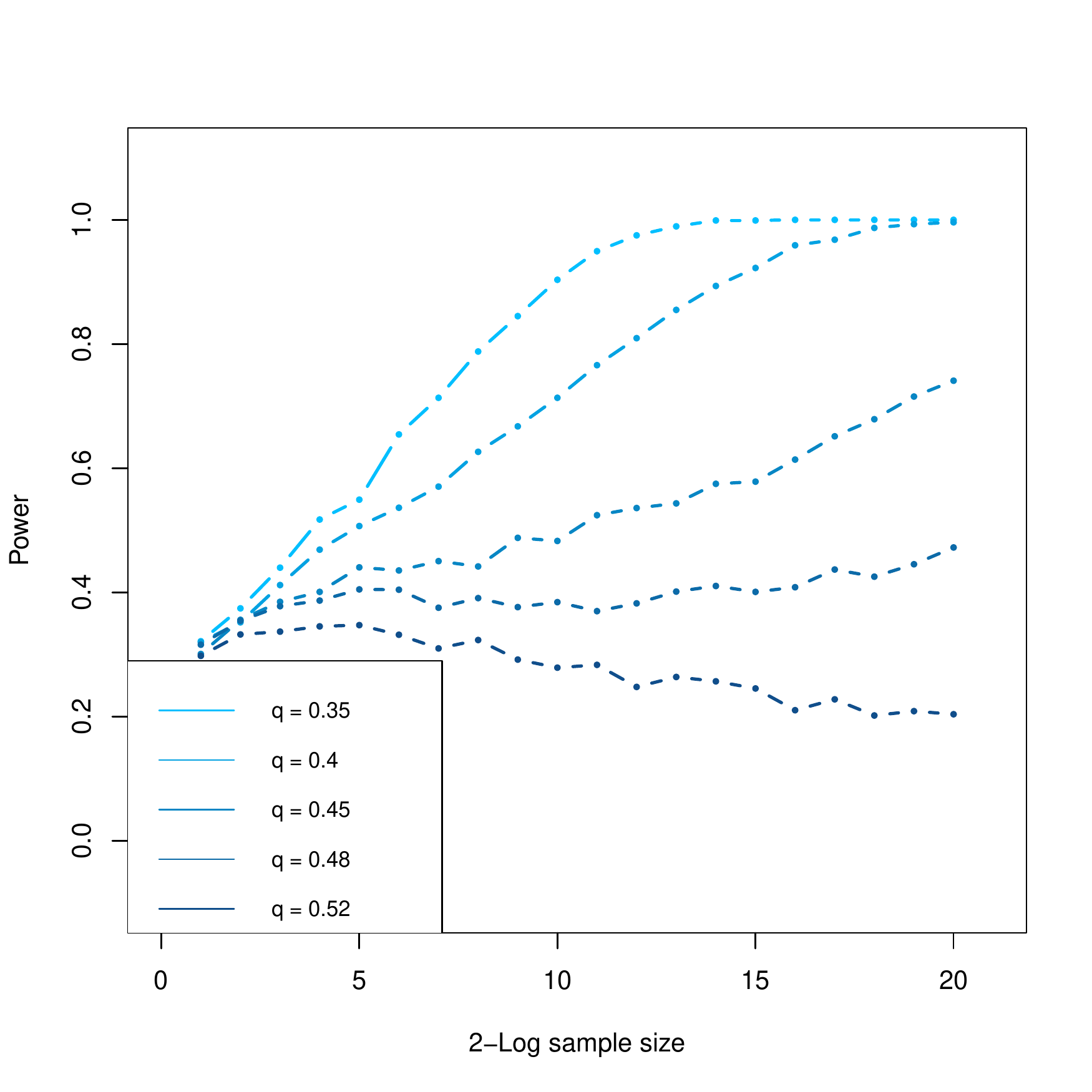}
	\end{minipage}
	\caption{The power curves for the alternatives $\mu = (2n^{-q})_{i=1}^n$ (in the left panel) and $\mu = (i^{-q})_{i=1}^n$ (in the right panel) as $q$ varies, for sample sizes $n\in \{2^\ell,\ell\in[1:20]\}$. The plots illustrate that the LRT has power  in the range $q \in (0,1/2)$ in both the examples.}
	\label{fig:power_orthant}
\end{figure}

\begin{figure}[t]
	\begin{minipage}[b]{0.45\textwidth}
		\includegraphics[width=\textwidth]{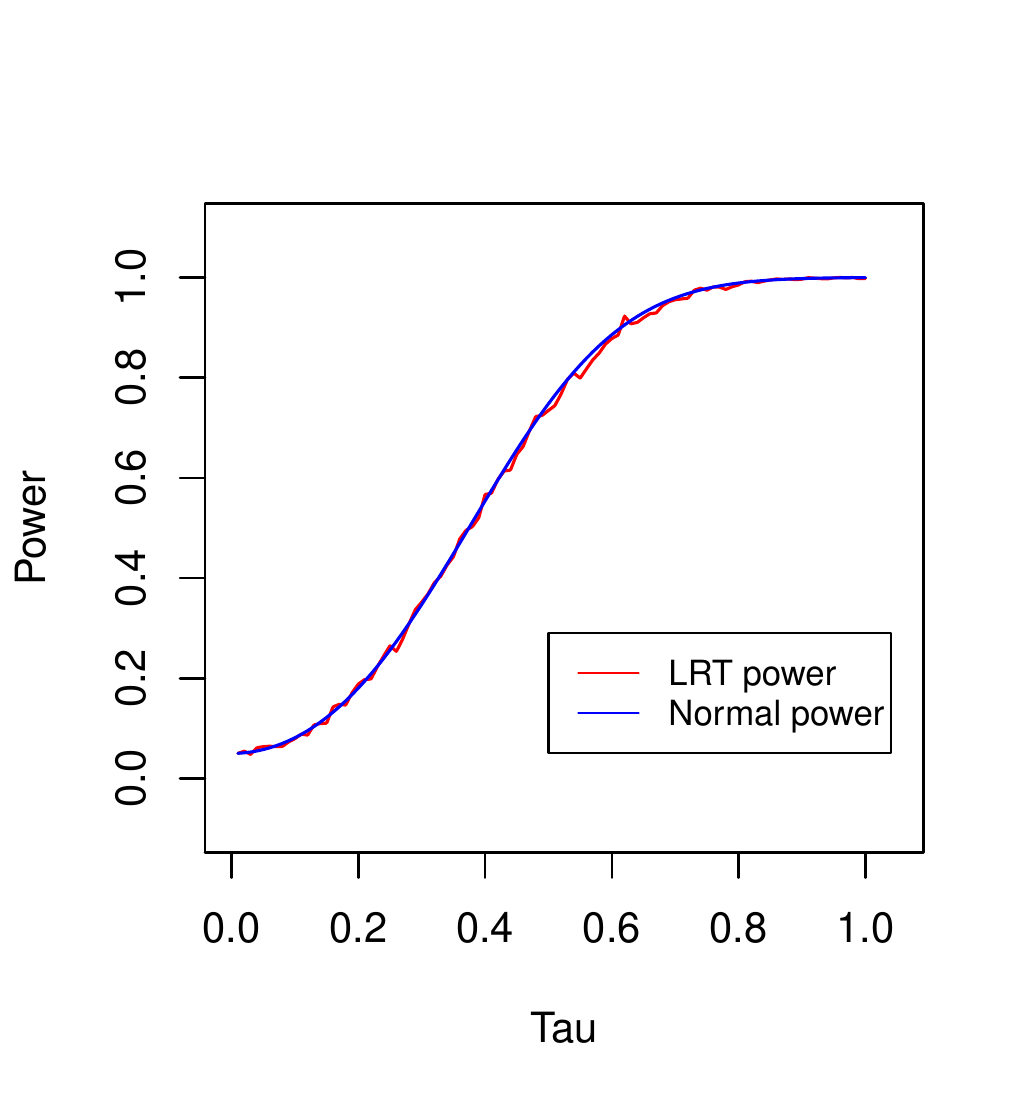}
	\end{minipage}
	\begin{minipage}[b]{0.45\textwidth}
		\includegraphics[width=\textwidth]{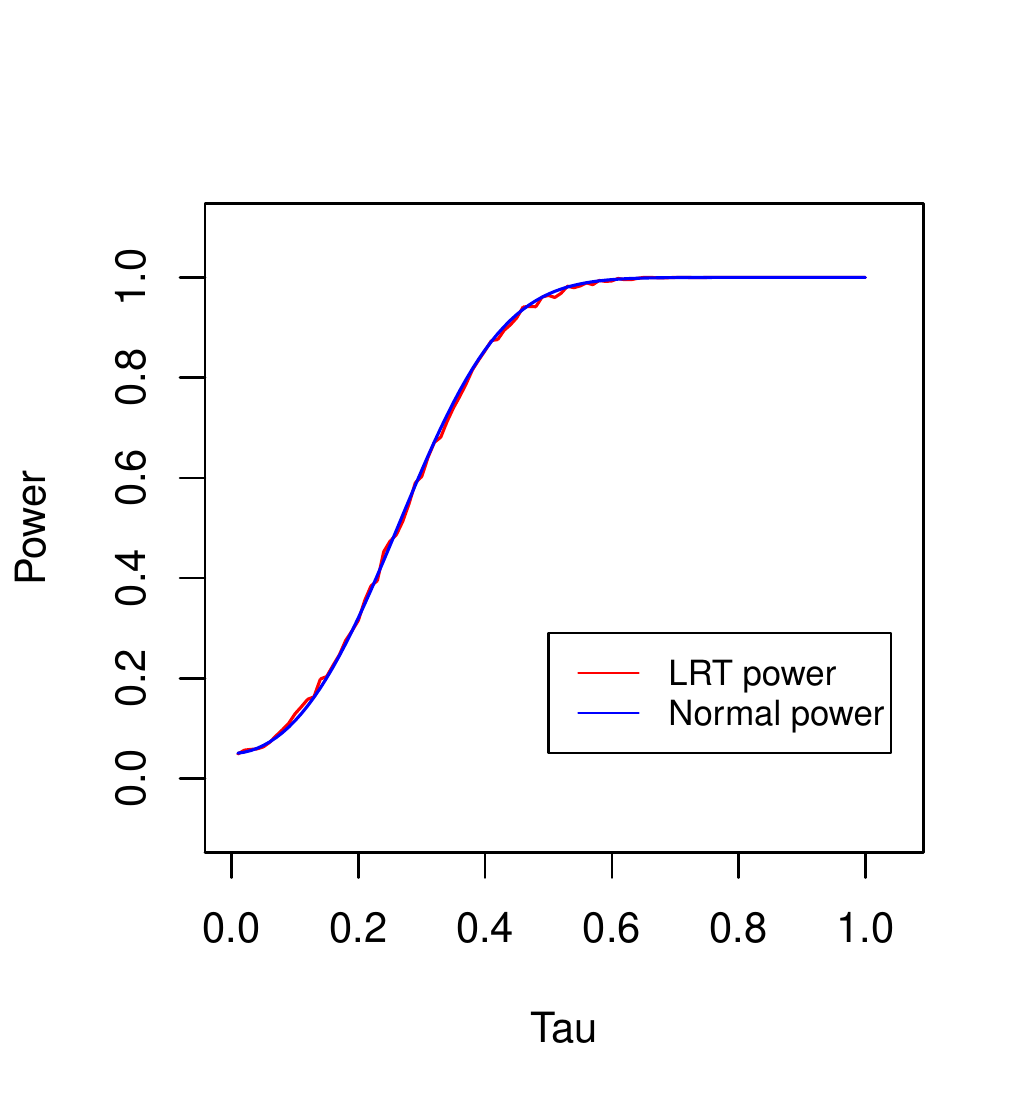}
	\end{minipage}
	\caption{Fixed $q = 0.3$ and $n = 20000$. The alternatives are $\mu = (\tau_1 n^{-0.3})$ in left panel and $\mu = (\tau_2 i^{-0.3})$ in the right panel with $\tau_1,\tau_2 \in \{0.01,0.02,\ldots,1\}$. The red line denotes the power curve of LRT, i.e., $\{\E_\mu \Psi_{\mathrm{ts}}(Y;m_0,\sigma_0):\tau_1 \}$, while the blue line denotes the theoretical power curve via normal approximation, i.e., $\{\Delta_{\mathcal{A}_\alpha}\big(\Gamma_{K,2}(\mu)/\sigma_0\big):\tau_2\}$. }
	\label{fig:power_orthant_fixed_direction}
\end{figure}

Below we present simulation results under the two settings considered in Example \ref{example:orthant}. The confidence level will be taken as $\alpha=0.05$. The power of the LRT in both the simulations below is calculated using an average of $2000$ replications.
\begin{itemize}
	\item In Figure \ref{fig:power_orthant}, we take $\tau_1=2,\tau_2=1$ and examine the sharpness of the power characterization $q \in (0,1/2)$ predicted by Corollary \ref{cor:lrt_clt_orthant}-(2).	Clearly, Figure \ref{fig:power_orthant} shows that $q \in (0,1/2)$ is the correct range where the LRT is powerful in both the settings of Example \ref{example:orthant}, rather than $q \in (0,1/4)$ as predicted by \cite{wei2019geometry}. 
	\item In Figure \ref{fig:power_orthant_fixed_direction}, we fix $q=0.3$,  $n=20000$, and examine the validity of the normal power expansion (\ref{ineq:normal_power_expansion_subspace}) in Theorem \ref{thm:power_lrt_subspace} along the alternatives considered in Example \ref{example:orthant} with $\tau_1,\tau_2 \in \{0.01,0.02,\ldots,1\}$. Formally, we consider two power curves: (i) the power of the LRT, i.e., $\E_\mu \Psi_{\mathrm{ts}}(Y;m_0,\sigma_0)$, (ii) theoretical power given by the normal approximation, i.e., $\Delta_{\mathcal{A}_\alpha}\big(\Gamma_{K,2}(\mu)/\sigma_0\big)$, for alternatives of the form $\mu = (\tau_1 n^{-0.3})_{i=1}^n$ and $\mu = (\tau_2 i^{-0.3})_{i=1}^n$ with the prescribed $\tau_1,\tau_2$'s. Figure \ref{fig:power_orthant_fixed_direction} clearly shows that the two power curves are very close to each other.
\end{itemize}

\subsubsection{Counter-examples}

Let $\mu_0 = \bm{1}_n \in K_+$, and $\mu = c \bm{1}_n$ for some fixed $c > 0$ to be determined. As long as $c\neq 1$, we have $\|\mu - \mu_0\|^2 = n(c-1)^2 \asymp n$. We also have $\sigma_\mu^2 = n\cdot\var\big[(c+\xi_1-1)^2 - (c+\xi_1)_-^2\big]\equiv n \rho^2(c) \asymp n$, and
\begin{align*}
m_\mu - m_{\mu_0} = \|\mu - \mu_0\|^2 + \sum_{i=1}^n\big(\bar{S}_+(c) - \bar{S}_+(1)\big) = n\big\{(c-1)^2 + \bar{S}_+(c) - \bar{S}_+(1)\big\},
\end{align*}
where $\bar{S}_+$ is defined in (\ref{def:S_+}). 
Let $F(c)\equiv (c-1)^2 + \bar{S}_+(c) - \bar{S}_+(1)$. Then $F(1) = 0$, $F(0) = 0.5753...$, and $F^\prime(1) = \bar{S}_+^\prime(1) = 0.1666... > 0$.

We first present a choice of $c$ that leads to an example showing the necessity of (\ref{cond:power_lrt_global_testing_0}) for the power characterization  (\ref{eq:power_lrt_global_testing}).

\begin{example}\label{example:necessity_reg_cond}
	By the previous discussion,  $F$ must admit a zero in the open interval $(0,1)$, which we denote as $c_0$. With $c = c_0$, we then have $m_\mu = m_{\mu_0}$. Moreover, as $\sigma_\mu^2 = n\rho^2(c_0)\neq n \rho^2(1)= \sigma_{\mu_0}^2$, so by Theorem \ref{thm:lrt_clt_global_testing_orthant}-(1),
	\begin{align*}
	\frac{T(\mu+\xi)-m_{\mu_0}}{\sigma_{\mu_0}} = \frac{T(\mu+\xi)-m_{\mu}}{\sigma_{\mu}}\cdot \frac{\sigma_\mu}{\sigma_{\mu_0}} \rightarrow_d \mathcal{N}\bigg(0,\frac{\rho^2(c_0)}{\rho^2(1)}\bigg)\neq_d \mathcal{N}(0,1).
	\end{align*}
	This means  (\ref{eq:power_lrt_global_testing}) fails.
\end{example}

Next we present a choice of $c$ that leads to an example showing the necessity of considering two-sided LRT. 

\begin{example}\label{example:LRT_two_sided}
	By the previous discussion, $F(c)<0$ for $c \in (0,1)$ near 1. Pick any $c_1 \in (0,1)$ such that $F(c_1)<0$ and consider $c=c_1$. Let $\mu= c_1 \bm{1}_n$. As $\sigma_{\mu_0}\asymp n^{1/2}$,  $(m_\mu-m_{\mu_0})/\sigma_{\mu_0} \asymp -n^{1/2}$, so by Theorem \ref{thm:lrt_clt_global_testing_orthant}-(1),
	\begin{align*}
	\frac{T(\mu+\xi)-m_{\mu_0}}{\sigma_{\mu_0}} = \frac{T(\mu+\xi)-m_{\mu}}{\sigma_{\mu}}\cdot \frac{\sigma_\mu}{\sigma_{\mu_0}}+ \frac{m_\mu-m_{\mu_0}}{\sigma_{\mu_0}} \to -\infty
	\end{align*}
	in probability. This means that the two-sided LRT in (\ref{def:LRT_two_sided}) is powerful under $\mu= c_1 \bm{1}_n$, i.e., $\E_\mu \Psi_{\mathrm{ts}}(Y;m_{\mu_0},\sigma_{\mu_0})\to 1$, but the one-sided LRT in (\ref{def:LRT_one_sided}) is not powerful under $\mu$, i.e., $\E_\mu \Psi_{\textrm{os}}(Y;m_{\mu_0},\sigma_{\mu_0})\to 0$.
\end{example}

\subsection{Testing in circular cone}\label{subsec:example_circular}

For any $\alpha\in(0,\pi/2)$, let the $\alpha$-circular cone be defined by $$K_\alpha\equiv \left\{\nu\in \R^{n-1}: \nu_1\geq \|\nu\|\cos(\alpha) \right\},$$ and let $ K_{\times,\alpha} \equiv K_\alpha\times \R\subset\R^n$. Consider the testing problem (\ref{eq:testing_generic}) with $\mu_0 = 0$ and $K\in \{K_\alpha, K_{\times,\alpha}\}$. The circular cone has recently been used in modeling by \cite{besson2006adaptive,greco2008radar}. The following result (see Section~\ref{pf:lrt_clt_global_testing_circular}  for a proof) gives the limiting distribution of the LRS and characterizes the power behavior of the LRT in this example.

\begin{theorem}\label{thm:lrt_clt_global_testing_circular}
	\begin{enumerate}
		\item Let $K \in \{K_\alpha,K_{\times,\alpha}\}$. There exists some universal constant $C>0$ such that, 
		\begin{align*}
		d_{\mathrm{TV}}\bigg( \frac{T(Y)-m_{0}}{ \sigma_{0} },\mathcal{N}(0,1)\bigg)&\leq \frac{C}{\sqrt{n} }.
		\end{align*}
		Consequently the LRT is asymptotically size $\alpha$ with $\E_{\mu_0} \Psi_{\mathrm{ts}}(Y;m_0,\sigma_0) =\alpha + \mathcal{O}(n^{-1/2})$.
		\item 
		\begin{enumerate}
			\item \sloppy For any $\mu \in K_\alpha$, the LRT is power consistent under $\mu$, i.e., $\E_{\mu} \Psi_{\mathrm{ts}}(Y;m_0,\sigma_0) \to 1$, if and only if $\pnorm{\mu}{}\gg 1$.
			\item \sloppy For any $\mu \in K_{\times,\alpha}$, the LRT is power consistent under $\mu$, i.e., $\E_{\mu} \Psi_{\mathrm{ts}} (Y;m_0,\sigma_0) \to 1$, if and only if $\|\mu^1\|\gg 1$ or $\abs{\mu^2} \gg n^{1/4}$.
		\end{enumerate}
		Here for any $\mu\in \R^n$, $\mu = (\mu^1,\mu^2) \in \R^{n-1}\times \R$ with $\mu^1 \in \R^{n-1}$ denoting the first $n-1$ components of $\mu$ and $\mu^2 \in \R$ denoting the last. 
	\end{enumerate}
\end{theorem}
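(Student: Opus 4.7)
The plan is to derive Part (1) from Theorem \ref{thm:lrt_clt_pivotal} and Part (2) from Theorem \ref{thm:power_lrt_subspace}, applied in both cases with the trivial subspace $K_0 = \{0\}$, so that $T(Y) = \pnorm{\Pi_K(Y)}{}^2$ and the power threshold reduces to $\Gamma_{K,2}(\mu)/\sqrt{\delta_K}\to \infty$. Both claims ultimately rest on two quantitative estimates: the statistical dimension $\delta_K$, and the mean shift $\Gamma_{K,2}(\mu) = \E\pnorm{\Pi_K(\mu+\xi)}{}^2 - \delta_K$, which I would control using the closed-form projection onto the circular cone.

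For Part (1), I would first verify $\delta_{K_\alpha}\asymp_\alpha n$ for fixed $\alpha\in(0,\pi/2)$, from which $\delta_{K_{\times,\alpha}} = \delta_{K_\alpha} + 1\asymp_\alpha n$ follows from the product structure. Using the explicit formula
\begin{equation*}
\pnorm{\Pi_{K_\alpha}(y)}{}^2 = \pnorm{y}{}^2\cos^2(\theta(y)-\alpha)\cdot \bm{1}\{\alpha<\theta(y)<\pi/2+\alpha\} + \pnorm{y}{}^2\cdot \bm{1}\{\theta(y)\leq \alpha\},
\end{equation*}
with $\theta(y) = \arccos(y_1/\pnorm{y}{})$, together with Gaussian concentration of $\pnorm{\xi}{}^2$ near $n-1$ and of $\theta(\xi)$ near $\pi/2$, yields $\delta_{K_\alpha} = (n-1)\sin^2\alpha + \mathcal{O}(1)$. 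Plugging into Theorem \ref{thm:lrt_clt_pivotal} then gives the claimed $\mathcal{O}(n^{-1/2})$ rate.

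For Part (2a), rotational symmetry around $e_1$ lets me place $\mu\in K_\alpha$ into the $e_1$-$e_2$ plane with $\mu_1\geq \pnorm{\mu}{}\cos\alpha$. Expanding the same projection formula for $\mu+\xi$ to leading order in the $\mathcal{O}(1/\sqrt{n})$ fluctuation of the angular coordinate, and subtracting the analogous expansion for $\delta_{K_\alpha}$, should produce an asymptotic expansion of the form
\begin{equation*}
\Gamma_{K_\alpha,2}(\mu) \asymp_\alpha \sqrt{n}\cdot \mu_1 + \pnorm{\mu}{}^2 \asymp_\alpha \sqrt{n}\cdot \pnorm{\mu}{} + \pnorm{\mu}{}^2
\end{equation*}
valid uniformly over $\mu\in K_\alpha$ with $\pnorm{\mu}{}\leq n^{1/4}$; the second $\asymp_\alpha$ uses $\mu_1\asymp_\alpha\pnorm{\mu}{}$ on $K_\alpha$. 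For $\pnorm{\mu}{}\gg n^{1/4}$ the quadratic lower bound $\Gamma_{K_\alpha,2}(\mu)\geq \pnorm{\mu}{}^2$ from Remark \ref{rmk:subspace_one_sided_LRT}-(2) takes over. Combining the two regimes yields $\Gamma_{K_\alpha,2}(\mu)\gg \sqrt{n} \Longleftrightarrow \pnorm{\mu}{}\gg 1$, which is the claimed characterization.

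For Part (2b), the Cartesian product structure gives $\Pi_{K_{\times,\alpha}}(y) = (\Pi_{K_\alpha}(y^1),y^2)$ and hence the additive decomposition $\Gamma_{K_{\times,\alpha},2}(\mu) = \Gamma_{K_\alpha,2}(\mu^1) + (\mu^2)^2$. The power threshold from Theorem \ref{thm:power_lrt_subspace}-(2) then becomes $\Gamma_{K_\alpha,2}(\mu^1) + (\mu^2)^2 \gg \sqrt{n}$, which by Part (2a) is equivalent to $\pnorm{\mu^1}{}\gg 1$ or $\abs{\mu^2}\gg n^{1/4}$. I expect the main technical obstacle to be uniform control of the remainder in the expansion of $\Gamma_{K_\alpha,2}(\mu)$ across $\pnorm{\mu}{}\leq n^{1/4}$, in particular near the exceptional events $\{\theta(\mu+\xi)\leq \alpha\}$ and $\{\theta(\mu+\xi)\geq \pi/2+\alpha\}$ where the closed-form projection switches branches; Gaussian tail bounds on $\theta(\mu+\xi)-\pi/2$, which is typically of order $1/\sqrt{n}$, should absorb these small-probability events into the $\mathcal{O}(1)$ remainder.
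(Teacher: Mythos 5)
Your proposal is correct, but the route you take for part (2a) is genuinely different from the paper's, and your part (1) is more self-contained than necessary. The paper never computes the mean shift $\Gamma_{K_\alpha,2}(\mu)$ for $K_\alpha$: for the ``only if'' direction it directly shows $T(\mu+\xi)-T(\xi)=\mathcal{O}_{\mathbf{P}}(n^{1/2})$ whenever $\pnorm{\mu}{}=\mathcal{O}(1)$, using only the $1$-Lipschitz property of $\Pi_{K_\alpha}$ and $\sigma_0\asymp n^{1/2}$, and concludes from the decomposition~\eqref{ineq:decomposition_power_lrt} that the normalized LRS stays $\mathcal{O}_{\mathbf{P}}(1)$; the ``if'' direction is then taken from Corollary~\ref{cor:wwg} (equivalently, the separation-rate result of \cite{wei2019geometry}). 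Your plan instead aims for the two-sided estimate $\Gamma_{K_\alpha,2}(\mu)\asymp_\alpha\sqrt{n}\pnorm{\mu}{}+\pnorm{\mu}{}^2$ and feeds it into Theorem~\ref{thm:power_lrt_subspace}-(2), which delivers both directions from a single mean-shift computation --- sharper and more explicit, but requiring the closed-form projection. One remark worth making: the delicate asymptotic expansion of $\pnorm{\Pi_{K_\alpha}(\mu+\xi)}{}^2$ that you flag as your main technical obstacle is avoidable. The upper bound $\Gamma_{K_\alpha,2}(\mu)\le 2\pnorm{\mu}{}\,\E\pnorm{\Pi_{K_\alpha}(\xi)}{}+\pnorm{\mu}{}^2\lesssim_\alpha\sqrt{n}\pnorm{\mu}{}+\pnorm{\mu}{}^2$ follows from the Lipschitz bound $\pnorm{\Pi_{K_\alpha}(\mu+\xi)}{}\le\pnorm{\Pi_{K_\alpha}(\xi)}{}+\pnorm{\mu}{}$, while the matching lower bound follows by convexity of $t\mapsto\E\pnorm{\Pi_{K_\alpha}(t\mu+\xi)}{}^2$ (recall $\pnorm{\Pi_K(y)}{}^2=\mathrm{dist}(y,K^*)^2$ for a cone $K$), which gives $\Gamma_{K_\alpha,2}(\mu)\ge 2\iprod{\mu}{\E\Pi_{K_\alpha}(\xi)}=2\mu_1\,(\E\Pi_{K_\alpha}(\xi))_1\gtrsim_\alpha\sqrt{n}\pnorm{\mu}{}$ for $\mu\in K_\alpha$, together with the global bound $\Gamma_{K_\alpha,2}(\mu)\ge\pnorm{\mu}{}^2$ from \cite[Lemma E.1]{wei2019geometry}. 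This sidesteps the remainder and branching concerns entirely. Your parts (1) and (2b) match the paper's argument (the paper simply cites \cite[Section 6.3]{mccoy2014from} for $\delta_{K_\alpha}\asymp n$ rather than re-deriving it, and the additive decomposition of $\Gamma_{K,2}$ over the product in (2b) is identical).
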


Regarding the two cones $\{K_\alpha,K_{\times,\alpha}\}$, \cite{wei2019geometry} showed the following:
\begin{itemize}
	\item For $K_\alpha$,  an optimally calibrated LRT is powerful for $\mu \in K_\alpha$ such that $\pnorm{\mu}{}\gg 1$. The minimax $\|\cdot\|$-separation rate is of the same constant order, so the LRT is minimax optimal.
	\item For $K_{\times,\alpha}$,  an optimally calibrated LRT is powerful for $\mu \in K_{\times,\alpha}$ such that $\pnorm{\mu}{}\gg n^{1/4}$, while the minimax $\|\cdot\|$-separation rate is of constant order, so the LRT is strictly minimax sub-optimal.
\end{itemize}

Theorem \ref{thm:lrt_clt_global_testing_circular}-(2) is rather interesting compared to the above results of \cite{wei2019geometry}:
\begin{itemize}
	\item For $K_\alpha$, Theorem \ref{thm:lrt_clt_global_testing_circular}-(2)(a) shows that the power behavior of LRT is uniform with respect to $\pnorm{\cdot}{}$ for $K_\alpha$. In other words, for any $\mu\in K_\alpha$ with $\|\mu\| = \mathcal{O}(1)$ the LRT is necessarily not powerful. 
	\item For $K_{\times,\alpha}$, Theorem \ref{thm:lrt_clt_global_testing_circular}-(2)(b) shows that the only bad alternatives that drive the uniform separation rate $n^{1/4}$ in $\pnorm{\cdot}{}$ are those $\mu = (\mu^1,\mu^2) \in K_{\times,\alpha}$ lying in the narrow cylinder $\pnorm{\mu^1}{}=\mathcal{O}(1)$ and $\abs{\mu^2}=\mathcal{O}(n^{1/4})$, and the LRT will be powerful for points of the form, e.g. $(\mu^1, 0)$ as soon as $\|\mu^1\|\gg 1$. This is in line with the result of Theorem \ref{thm:lrt_clt_global_testing_circular}-(2)(a), and provides another example where the LRT exhibits non-uniform power behavior with respect to $\pnorm{\cdot}{}$. 
\end{itemize}
Similar to the LRT in the orthant cone, one may easily see that the conservative uniform separation rate (i.e., $\pnorm{\mu}{}\gg n^{1/4}$) in $\pnorm{\cdot}{}$ for $K_{\times,\alpha}$ fails to detect `most' alternatives where the LRT is powerful, as $n \to \infty$. In this sense, the minimax sub-optimality of LRT for testing $0$ versus $K_{\times,\alpha}$ is also conservative as the LRT behaves badly for only a few alternatives with large separation rate in $\pnorm{\cdot}{}$.

The phenomenon observed above for the product circular cone can be easily extended as follows. For some positive integer $m$ and generic closed convex cones $K_i \subset \R^{n_i}$, $i=1,\ldots,m$, let $K_{\times} \equiv \times_{i=1}^m K_i \subset \R^{\sum_{i=1}^m n_i}$ be the associated product cone. Then the LRT for testing $0$ versus $K_\times$ is power consistent under $\mu=(\mu^i)_{i=1}^m \in \times_{i=1}^m K_i = K$ if and only if 
\begin{align*}
\frac{ \sum_{i=1}^m \Gamma_{K_i,2}\big(\mu^i\big) }{ \big(\sum_{i=1}^m\delta_{K_i}\big)^{1/2} }\to \infty.
\end{align*} 
The proof is largely similar to Theorem \ref{thm:lrt_clt_global_testing_circular}-(2)(b) so we omit the details.

\subsection{Testing in isotonic regression}\label{subsec:example_isotonic}

Let the monotone cone be defined by $$K_{\uparrow}\equiv K_{\uparrow,0}= \{\nu =(\nu_1,\ldots, \nu_n) \in \R^n: \nu_1\leq \ldots\leq \nu_n\}.$$ We consider the testing problem (\ref{eq:testing_generic}) with $K = K_\uparrow$ using the two-sided LRT (as  in~\eqref{def:LRT_two_sided}). The following result (see Section~\ref{pf:lrt_clt_global_testing_iso}  for a proof) gives the limiting distribution of the LRS and characterizes the power behavior of the LRT in this example. 

\begin{theorem}\label{thm:lrt_clt_global_testing_iso}
	\begin{enumerate}
		\item Suppose $\mu_0 \in K_{\uparrow}$, and for a universal constant $L>1$,
		{\small \begin{align}\label{cond:lrt_clt_iso}
			\frac{1}{L} \leq \min_{1\leq i\leq n-1} n\big((\mu_0)_{i+1}-(\mu_0)_{i}\big) \leq \max_{1\leq i\leq n-1} n\big((\mu_0)_{i+1}-(\mu_0)_{i}\big)\leq L.
			\end{align}}
		Then
		\begin{align*}
		d_{\mathrm{TV}}\bigg( \frac{T(Y)-m_{\mu_0}}{ \sigma_{\mu_0} },\mathcal{N}(0,1)\bigg)\leq \frac{C}{n^{1/6}}.
		\end{align*}
		Here $C>0$ is a constant depending on $L$ only. Consequently the LRT is asymptotically size $\alpha$ with $\E_{\mu_0} \Psi_{\mathrm{ts}} (Y;m_{\mu_0},\sigma_{\mu_0}) =\alpha + \mathcal{O}(n^{-1/6})$.
		\item Let $\mu_0 = \big(f_0(i/n)\big)_{i=1}^n$ and $\mu = \big(f(i/n)\big)_{i=1}^n$, where $f,f_0:[0,1]\to \R$ are $C^2$ monotone functions related by $f_0 = f+\rho_n \delta$ for some $C^1$ function $\delta:[0,1]\to \R$ with $\int \delta^2 =1$ and $\delta'$ is bounded away from $0$ and $\infty$. Suppose the first derivatives $f',f_0'$ are bounded away from $0$ and $\infty$ and second derivatives $f'',f_0''$ are bounded away from $\infty$. Then the LRT is power consistent under $\mu$, i.e., $\E_\mu \Psi_{\mathrm{ts}}(Y;m_{\mu_0},\sigma_{\mu_0}) \to 1$, if and only if $\rho_n \gg n^{-5/12}$, if and only if $\pnorm{\mu-\mu_0}{}\gg n^{1/12}$.
	\end{enumerate}
\end{theorem}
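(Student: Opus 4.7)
The plan is to specialize the master theorems of Section \ref{section:clt_LRT}: Theorem \ref{thm:lrt_clt_global_testing} yields Part (1), and Theorem \ref{thm:power_lrt_global_testing}-(2) yields Part (2). Write $\hat{\mu}\equiv\hat{\mu}_{K_\uparrow}$ throughout.

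For Part (1), one controls each of the three problem-specific quantities on the right side of (\ref{ineq:lrt_clt_global_testing}). Under (\ref{cond:lrt_clt_iso}) the total variation of $\mu_0$ is bounded, so Chatterjee-type sharp oracle bounds combined with classical risk asymptotics for isotonic regression give $\E_{\mu_0}\pnorm{\hat{\mu}-\mu_0}{}^2\asymp n^{1/3}$ and $\pnorm{\E_{\mu_0}\hat{\mu}-\mu_0}{}^2\asymp n^{1/3}$ (balanced bias-variance in the strictly increasing smooth regime). For the Jacobian, I would exploit the block-diagonal structure of $J_{\hat{\mu}}$: if $B(i)$ denotes the pool-adjacent-violators block containing index $i$, then almost surely $(J_{\hat{\mu}})_{ij}=|B(i)|^{-1}\bm{1}\{j\in B(i)\}$. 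Under (\ref{cond:lrt_clt_iso}), block sizes concentrate at the scale $n^{2/3}$, so for $|i-j|\lesssim n^{2/3}$ one has $(\E_{\mu_0}J_{\hat{\mu}})_{ij}\asymp n^{-2/3}$, and summing yields $\pnorm{\E_{\mu_0}J_{\hat{\mu}}}{F}^2 \gtrsim n\cdot n^{2/3}\cdot n^{-4/3}\asymp n^{1/3}$. Plugging these into (\ref{ineq:lrt_clt_global_testing}) gives a bound of order $\sqrt{n^{1/3}}/n^{1/3}=n^{-1/6}$.

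For Part (2), apply Theorem \ref{thm:power_lrt_global_testing}-(2) with $\mathcal{A}_\alpha=\mathcal{A}^{\textrm{ts}}_\alpha$. Since both $\mu_0$ and $\mu$ satisfy the slope regularity for large $n$, Part (1) applies to both, giving $\sigma_{\mu_0}\asymp n^{1/6}$ and $\mathrm{err}_{\mu_0}\to 0$. The key step is to pin down $m_\mu-m_{\mu_0}$. By the Stein identity (\ref{def:m_mu}),
\[m_\mu - m_{\mu_0} = \pnorm{\mu-\mu_0}{}^2 + 2\big(\E_\mu\dv\hat{\mu}-\E_{\mu_0}\dv\hat{\mu}\big) - \big(\E_\mu\pnorm{\hat{\mu}-\mu}{}^2-\E_{\mu_0}\pnorm{\hat{\mu}-\mu_0}{}^2\big).\]
Since $\E_\mu\dv\hat{\mu}$ equals the expected number of PAVA blocks, both this divergence and the risk are $\asymp n^{1/3}$ and are smooth functionals of the monotone signal (via leading-order integrals of the form $\int_0^1(f')^{2/3}$ from Brunk--Groeneboom asymptotics). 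Taylor expansion along the perturbation $f\mapsto f_0=f+\rho_n\delta$ then shows that both correction terms are of order $O(\rho_n n^{1/3})$. Since $\pnorm{\mu-\mu_0}{}^2 = n\rho_n^2\gg\rho_n n^{1/3}$ for $\rho_n\gg n^{-2/3}$—and the critical regime $\rho_n\asymp n^{-5/12}$ is well within this—one obtains $m_\mu-m_{\mu_0}=n\rho_n^2(1+o(1))$ in this regime, while $m_\mu-m_{\mu_0}=o(\sigma_{\mu_0})$ in the remaining small-$\rho_n$ regime. Condition (\ref{cond:power_lrt_global_testing_0}) is verified directly: $\pnorm{\mu-\mu_0}{}=\sqrt{n}\rho_n\ll n\rho_n^2\vee n^{1/6}$ uniformly in $\rho_n$. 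Then Theorem \ref{thm:power_lrt_global_testing}-(2) yields power consistency iff $n\rho_n^2\gg n^{1/6}$, i.e., iff $\rho_n\gg n^{-5/12}$ iff $\pnorm{\mu-\mu_0}{}\gg n^{1/12}$.

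The main obstacle is the sharp lower bound $\pnorm{\E_{\mu_0}J_{\hat{\mu}}}{F}^2\gtrsim n^{1/3}$ for Part (1): this requires controlling the distribution of PAVA block sizes in expectation, not merely with high probability. A feasible route is via the min-max representation of $\hat{\mu}$ combined with Brunk--Groeneboom-type local limit analysis and concentration of block sizes at the scale $n^{2/3}$. For Part (2), a parallel technical point is the first-order asymptotic expansion of $\E_\mu\dv\hat{\mu}$ and $\E_\mu\pnorm{\hat{\mu}-\mu}{}^2$ as functionals of the smooth monotone signal, but the required precision $O(\rho_n n^{1/3})$ is rather mild and should follow from classical Grenander-type asymptotics or a direct PAVA-stability argument.
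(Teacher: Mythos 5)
Your plan reproduces the paper's argument: Part (1) follows by plugging a lower bound on $\pnorm{\E_{\mu_0}J_{\hat\mu}}{F}^2$ of order $n^{1/3}$ (via PAVA block localization at scale $n^{2/3}$, cf.\ Proposition \ref{prop:lower_bound_J_iso} and Lemma \ref{lem:iso_localize}) and an upper bound $\E_{\mu_0}\pnorm{\hat\mu-\mu_0}{}^2\lesssim n^{1/3}$ into Theorem \ref{thm:lrt_clt_global_testing}, and Part (2) follows from Theorem \ref{thm:power_lrt_global_testing}-(2) plus the Meyer--Woodroofe second-order expansions for $\E_{\mu_f}\dv\hat\mu$ and $\E_{\mu_f}\pnorm{\hat\mu-\mu_f}{}^2$ as functionals of $\int(f')^{2/3}$. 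One small but worth-noting inaccuracy: you claim $\pnorm{\E_{\mu_0}\hat\mu-\mu_0}{}^2\asymp n^{1/3}$ by ``balanced bias-variance,'' but that balancing is a statement about the pointwise $n^{-1/3}$ fluctuation, whose leading Chernoff term is mean-zero; the systematic $\ell_2$ bias $\pnorm{\E_{\mu_0}\hat\mu-\mu_0}{}^2$ is of strictly smaller order, and the paper's proof simply drops this term from the denominator of (\ref{ineq:lrt_clt_global_testing}) since the Frobenius term alone delivers $n^{1/3}$; your claim is both unnecessary and not correct as stated, though it does not affect the final rate $n^{-1/6}$.
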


A few remarks are in order.
\begin{itemize}
	\item (\textit{Normal approximation}) The normal approximation  in Theorem~\ref{thm:lrt_clt_global_testing_iso}-(1) settles the problem of the limiting distribution for the LRT used in the simulation in \cite[Section 4]{durot2001goodness}. There the LRT is compared to a goodness-of-fit test based on the central limit theorem for the $\ell_1$ estimation error of isotonic LSE (cf. \cite{groeneboom1985estimating,groeneboom1999asymptotic,durot2007error}). We note that  condition~(\ref{cond:lrt_clt_iso}) on the sequence $\mu_0$ is equivalent to a bounded first derivative away from $0$ and $\infty$ at the function level. This condition is commonly adopted in global CLTs for $\ell_p$ type losses of isotonic LSEs, cf.~\cite{durot2007error}. In fact, the condition in \cite{durot2007error} is stronger than (\ref{cond:lrt_clt_iso}) to guarantee a CLT for $\ell_p$ estimation error of the isotonic LSE.
	
	\item (\textit{Rate of normal approximation}) We conjecture that the error rate $\mathcal{O}(n^{-1/6})$ in the above normal approximation is optimal based on the following heuristics. Writing $\hat{\mu}$ as a shorthand for $\hat{\mu}_{K_\uparrow}$, the LRS $T(Y)$ can be written, under $H_0$, as
	\begin{align*}
	T(Y)& = 2\iprod{\xi}{\hat{\mu}-\mu_0}-\pnorm{\hat{\mu}-\mu_0}{}^2\\
	& = \sum_{i=1}^n \Big(2\xi_i (\hat{\mu}_i-(\mu_0)_i)- (\hat{\mu}_i-(\mu_0)_i)^2\Big).
	\end{align*}
	Under the regularity condition (\ref{cond:lrt_clt_iso}), the isotonic LSE $\hat{\mu}$ is localized in the sense that each $\hat{\mu}_i$ roughly depends on $\mu_0$ and $\xi$ only via indices in a local neighborhood of $i$ that contains $\mathcal{O}(n^{2/3})$ many points. So one may naturally view $T(Y)$ as roughly a summation of $\mathcal{O}(n^{1/3})$ `independent' blocks, each of which roughly has variance of constant order. This naturally leads to the $ \mathcal{O}(1/\sqrt{n^{1/3}})=\mathcal{O}(n^{-1/6})$ rate in the Berry-Esseen bound of Theorem \ref{thm:lrt_clt_global_testing_iso}-(1). Our Theorem \ref{thm:lrt_clt_global_testing_iso}-(1) formalizes this intuition, but the proof is along a completely different line. 
	
	\item (\textit{Local power analysis}) The `local alternative' setting in Theorem \ref{thm:lrt_clt_global_testing_iso}-(2) follows that of \cite{durot2001goodness}. In particular, the separation rate in Theorem \ref{thm:lrt_clt_global_testing_iso}-(2) is reminiscent of \cite[Theorem 3.1]{durot2001goodness}. \cite{durot2001goodness} obtained, under similar configurations and regularity conditions, a separation rate for a goodness-of-fit test based on the CLT for the $\ell_1$ estimation error of the isotonic LSE of order $\rho_n\gg n^{-5/12}\vee n^{-1/2}\delta_n^{-1/2}$, where $\delta_n$ is the length of the support of the function $\delta$. Our results here show that the LRT has a sharp separation rate $\rho_n\gg n^{-5/12}$ under the prescribed configuration, which is no worse than the one derived in \cite{durot2001goodness} based on $\ell_1$ estimation error.
\end{itemize}

In the isotonic regression example above, the main challenge in deriving the normal approximation for $T(Y)$ is to lower bound the quantity $ \pnorm{\E_{\mu_0}J_{\hat{\mu}_{K_{\uparrow}}}}{F}^2$ in (\ref{ineq:lrt_clt_global_testing}). We detail this intermediate result in the following proposition, which may be of independent interest (see Section~\ref{pf:lrt_clt_global_testing_iso} for a proof).

\begin{proposition}\label{prop:lower_bound_J_iso}
	Under the setting of Theorem \ref{thm:lrt_clt_global_testing_iso}-(1), there exists a small enough constant $\kappa>0$, depending on $L$ only, such that
	\begin{align*}
	\big(\E_{\mu_0} J_{\hat{\mu}_{K_\uparrow} }\big)_{ij} \geq \kappa n^{-2/3}
	\end{align*}
	for $\{(i,j): \abs{i-j}\leq \kappa n^{2/3}, 0.1 n\leq i,j\leq 0.9n\}$ for $n$ large enough.
\end{proposition}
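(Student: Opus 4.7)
The starting point is a standard characterization of the Jacobian of the isotonic projection (see e.g.\ \cite{meyer2000degrees}): for a.e.\ $y \in \R^n$,
\begin{align*}
\big(J_{\Pi_{K_\uparrow}}(y)\big)_{ij}=\frac{1}{|B_i(y)|}\,\mathbf{1}\{j \in B_i(y)\},
\end{align*}
where $B_i(y) \subset [1{:}n]$ denotes the constant block of $\Pi_{K_\uparrow}(y)$ containing the index $i$. Setting $y=\mu_0+\xi$, one therefore has, for any constant $C>0$,
\begin{align*}
\bigl(\E_{\mu_0} J_{\hat\mu_{K_\uparrow}}\bigr)_{ij} \;\geq\; \frac{1}{2Cn^{2/3}+1}\cdot \Prob_{\mu_0}\bigl(\{i,j\}\subset B_i,\ |B_i|\leq 2Cn^{2/3}+1\bigr).
\end{align*}
Thus the proposition reduces to producing constants $\kappa,C,c>0$, depending only on $L$, such that the above probability is at least $c$ whenever $0.1n\leq i,j\leq 0.9n$ and $|i-j|\leq \kappa n^{2/3}$.

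The plan for lower bounding this probability is to use the classical correspondence between isotonic regression and the greatest convex minorant (GCM) of partial sums. Set $i_0\equiv \lfloor (i+j)/2\rfloor$ and form the recentred cumulative sum $S_k \equiv \sum_{\ell=1}^{k}\big(Y_\ell-(\mu_0)_{i_0}\big)$; then $\hat\mu_\ell-(\mu_0)_{i_0}$ equals the left slope at $\ell$ of the GCM of $\{(k,S_k)\}_{k=0}^{n}$, and the block $B_i$ is exactly the linear piece of this GCM containing $i$. Consequently, the event we must control is the event that the GCM of $\{(k,S_k)\}$ is linear on an interval $[\ell_\star,r_\star]$ with $i_0-Cn^{2/3}\leq \ell_\star\leq i_0-\kappa n^{2/3}$ and $i_0+\kappa n^{2/3}\leq r_\star\leq i_0+Cn^{2/3}$.

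Now I would perform the standard cube-root rescaling $k=i_0+\lfloor n^{2/3} t\rfloor$. Under the regularity condition \eqref{cond:lrt_clt_iso}, the deterministic part $\sum_{\ell=i_0+1}^{i_0+n^{2/3}t}\big((\mu_0)_\ell-(\mu_0)_{i_0}\big)$ is a discrete-convex function with curvature of order $1/n$ and evaluates, after rescaling by $n^{-1/3}$, to a smooth strictly convex parabola $q(t)=\tfrac12 f_0'(i_0/n)t^2+o(1)$ with second derivative bounded between $1/(2L)$ and $2L$; the random part $n^{-1/3}\sum_{\ell=i_0+1}^{i_0+n^{2/3}t}\xi_\ell$ is a rescaled Gaussian random walk whose distribution is close (in Kolmogorov distance) to a two-sided Brownian motion. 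Since the GCM of a two-sided Brownian motion plus a non-degenerate parabola $B(t)+qt^2$ is piecewise linear with linear pieces whose length has a continuous positive density on $(0,\infty)$ (a consequence of Groeneboom's analysis of the concave majorant of Brownian motion with parabolic drift, cf.\ \cite{groeneboom1985estimating}), the event above maps to an event on the limit process of the form ``the linear piece of the GCM at $0$ contains $[-\kappa,\kappa]$ and is contained in $[-C,C]$,'' which has positive probability for every $\kappa$ small and $C$ large. A quantitative, non-asymptotic version of this statement (uniform over $i_0\in[0.1n,0.9n]$ for all $n$ large) can be obtained by direct computation on the Gaussian random walk plus quadratic drift, balancing the drift contribution of order $t^2 n^{1/3}$ against the noise of order $\sqrt{|t|}n^{1/3}$.

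The main obstacle is the last step: obtaining a \emph{uniform-in-$n$, uniform-in-$i_0$} constant lower bound on the probability that the GCM at index $i_0$ has a linear piece of length in the window $[2\kappa n^{2/3},2Cn^{2/3}]$ covering $i_0$. This requires two separate estimates carried out via the switching relation — an upper bound showing $|B_i|\leq Cn^{2/3}$ with probability at least $1-\epsilon(C)$ with $\epsilon(C)\downarrow 0$, and a matching lower bound showing $|B_i|\geq \kappa n^{2/3}$ with probability at least $1-\eta(\kappa)$ with $\eta(\kappa)\downarrow 0$ — which together force the block to sit in the desired window with probability bounded below. Both estimates follow from Gaussian tail bounds for the supremum of $B(t)+ct^2$ on suitable intervals, translated back to the random walk via a Komlós–Major–Tusnády-type strong approximation, but the quantitative bookkeeping (keeping the constants uniform in $i_0$) is the place where care is needed.
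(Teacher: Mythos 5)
Your proposal is essentially correct and takes a genuinely different route from the paper's, in two respects. First, the opening inequality: you lower-bound
\begin{align*}
\bigl(\E_{\mu_0} J_{\hat\mu}\bigr)_{ij}=\E\frac{\mathbf{1}\{j\in B_i\}}{|B_i|}\geq\frac{\Prob\bigl(j\in B_i,\;|B_i|\leq N\bigr)}{N}
\end{align*}
by truncation, whereas the paper applies Cauchy--Schwarz to the same expectation to obtain $\bigl(\E J_{\hat\mu}\bigr)_{ij}\geq \Prob^2(\hat\mu_i=\hat\mu_j)\big/\E|B_i|$, and then bounds the numerator and the denominator separately. Your version is slightly more economical: it only requires a probability bound on the block size, whereas the paper must additionally bound the expected block size (integrating a tail). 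Second, for the localization at scale $n^{2/3}$, you argue through the greatest convex minorant of the recentred partial sums and invoke Groeneboom's description of the concave majorant of Brownian motion with parabolic drift, while the paper works with the max--min representation of $\hat\mu_i$ directly and inserts the KMT strong approximation into that formula (packaged as Lemma \ref{lem:iso_localize}). These two routes are equivalent under the switching relation and lead to the same analytic work.

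One imprecision worth flagging: in your closing paragraph you collapse the two-sided extension requirement to the pair of statements ``$|B_i|\leq Cn^{2/3}$ w.h.p.'' and ``$|B_i|\geq\kappa n^{2/3}$ w.h.p.''. The latter alone does not imply $j\in B_i$ for $|i-j|\leq\kappa n^{2/3}$ (the block could extend only in the direction away from $j$). Your earlier, more careful formulation --- pinning both the left endpoint $\ell_\star\leq i_0-\kappa n^{2/3}$ and the right endpoint $r_\star\geq i_0+\kappa n^{2/3}$ of the linear piece --- is the correct target event, and this is exactly what the paper controls via the argmax location $u^\ast$ in the max--min formula (showing $u^\ast< i-c^{-100}n^{2/3}$ with high probability) and its mirror on the right. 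So the fix is merely to carry the endpoint-by-endpoint control through your final summary; the substance of the argument is sound.
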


The above proposition is proved via exploiting the min-max representation of the isotonic LSE, a property not shared by general shape-constrained LSEs. We conjecture that results analogous to Theorem~\ref{thm:lrt_clt_global_testing_iso} hold for the general $k$-monotone cone $K_{\uparrow,k}$, to be formally defined in Section \ref{subsec:example_subspace}, but an analogue to Proposition \ref{prop:lower_bound_J_iso} above is not yet available for general $K_{\uparrow,k}$.

\subsection{Testing in Lasso} \label{subsec:example_lasso}

Consider the linear regression model $$Y = \mu+\xi\equiv X\theta + \xi,$$ where $X\in\R^{n\times p}$ is a fixed design matrix with $p\leq n$ and full column rank. Let $\Sigma \equiv X^\top X/n$ be the Gram matrix. Let $\hat{\theta}^0 \equiv (X^\top X)^{-1}X^\top Y$ be the ordinary LSE, $\hat{\theta}\equiv\hat{\theta}(\lambda)$ be the constrained Lasso solution defined as
\begin{align}\label{def:lasso_constrained}
\hat{\theta}(\lambda) \equiv \argmin_{\theta\in\R^p} \frac{1}{2}\|Y - X\theta\|^2 \qquad \text{s.t. } \|\theta\|_1\leq \lambda,
\end{align}
and $\hat{\mu}\equiv \hat{\mu}(\lambda) \equiv X\hat{\theta}(\lambda)$. The setting here fits into our general framework by letting $$K \equiv K_{X,\lambda}\equiv \{\mu = X\theta: \|\theta\|_1\leq \lambda\}$$ and $\hat{\mu}_{K}\equiv \hat{\mu}$. Note that we do not impose sparsity of $\theta$ here. We will be interested in the testing problem (\ref{eq:testing_generic}), i.e., $H_0: \mu= \mu_0$ versus $H_1: \mu \in K_{X,\lambda}$, where $\mu_0  = X\theta_0\in K_{X,\lambda}$ with $\pnorm{\theta_0}{1}\leq\lambda$. Such a goodness-of-fit test and the related problem of constructing confidence sets for the Lasso estimator has previously been studied in \cite{verzelen2010goodness,chatterjee2011bootstrapping,nickl2013confidence,shah2018goodness}. In the following, we use  the two-sided LRT $\Psi_{\mathrm{ts}} (Y;m_0,\sigma_0)$ (as  in~\eqref{def:LRT_two_sided}) to test  (\ref{eq:testing_generic}) and study its power characterization (see Section~\ref{pf:lrt_clt_global_testing_lasso} for a proof).

\begin{theorem}\label{thm:lrt_clt_global_testing_lasso}
	Suppose $p\rightarrow\infty$. For $\mu\in K_{X,\lambda}$, let $$ \mathfrak{p}_{\lambda,\mu}\equiv \Prob_\mu\big(\pnorm{\hat{\theta}^0}{1}~\geq~\lambda\big).$$
	\begin{enumerate}
		\item There exists a universal constant $C>0$ such that, for $\mu_0 \in K_{X,\lambda}$, 
		\begin{align*}
		d_{\mathrm{TV}}\bigg( \frac{T(Y)-m_{\mu_0}}{ \sigma_{\mu_0} },\mathcal{N}(0,1)\bigg)&\leq \frac{C \sqrt{p+ n \mathfrak{p}_{\lambda,\mu_0}^{1/2} } }{ \big(p- C (n \mathfrak{p}_{\lambda,\mu_0})^2\big)_+}.
		\end{align*}
		Consequently the LRT is asymptotically size $\alpha$ with $\E_{\mu_0} \Psi_{\mathrm{ts}}(Y;m_{\mu_0},\sigma_{\mu_0}) =\alpha + \mathcal{O}(p^{-1/2})$, provided that $n \mathfrak{p}_{\lambda,\mu_0}^{1/2} =\mathfrak{o}(1)$. 
		\item Suppose $n\cdot(\mathfrak{p}_{\lambda,\mu}^{1/2}\vee \mathfrak{p}_{\lambda,\mu_0}^{1/2}) = \mathfrak{o}(1)$. For any $\mu \in K_{X,\lambda}$, the LRT is power consistent, i.e., $\E_\mu \Psi_{\mathrm{ts}}(Y;m_{\mu_0},\sigma_{\mu_0}) \to 1$, if and only if $\pnorm{\mu-\mu_0}{}\gg p^{1/4}$. 
	\end{enumerate}
\end{theorem}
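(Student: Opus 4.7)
The plan is to derive Theorem \ref{thm:lrt_clt_global_testing_lasso} by specializing Theorem \ref{thm:lrt_clt_global_testing} and Theorem \ref{thm:power_lrt_global_testing} to $K = K_{X,\lambda}$. The central observation is: on the event $A_\lambda \equiv \{\pnorm{\hat\theta^0}{1}\leq \lambda\}$ (whose $\Prob_{\mu_0}$-complement has probability $\mathfrak{p}_{\lambda,\mu_0}$), the $\ell_1$ constraint is inactive, the Lasso solution coincides with the OLS, and hence $\hat\mu_K = P_X Y$ with $J_{\hat\mu_K} = P_X$ almost everywhere, where $P_X \equiv X(X^\top X)^{-1}X^\top$ is the rank-$p$ orthogonal projector onto $\mathrm{col}(X)$. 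Moreover, since $\hat\mu_K$ takes values in $\mathrm{col}(X)$, we have $\rank(J_{\hat\mu_K})\leq p$, so $\pnorm{J_{\hat\mu_K} - P_X}{F}^2 \leq 2p\cdot \pnorm{J_{\hat\mu_K} - P_X}{}^2 \lesssim p$ almost everywhere by Lemma \ref{lem:proj_basic}.

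For part (1), I would plug the above into Theorem \ref{thm:lrt_clt_global_testing}. The \emph{numerator} $\sqrt{\E_{\mu_0}\pnorm{\hat\mu_K - \mu_0}{}^2}$ is bounded by splitting the expectation on $A_\lambda$ and $A_\lambda^c$: on $A_\lambda$, $\hat\mu_K - \mu_0 = P_X\xi$ (since $\mu_0 \in \mathrm{col}(X)$), contributing $p$; on $A_\lambda^c$, the crude bound $\pnorm{\hat\mu_K - \mu_0}{}\leq 2\pnorm{\xi}{}$ combined with Cauchy--Schwarz and $\E\pnorm{\xi}{}^4\lesssim n^2$ yields a contribution of order $n\mathfrak{p}_{\lambda,\mu_0}^{1/2}$. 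For the \emph{denominator} $\pnorm{\E_{\mu_0} J_{\hat\mu_K}}{F}^2$, I would write $\E_{\mu_0} J_{\hat\mu_K} = P_X - \E_{\mu_0}[(P_X - J_{\hat\mu_K})\mathbf{1}_{A_\lambda^c}]$, expand the squared Frobenius norm, and bound the cross-term via Cauchy--Schwarz using the a.s.\ estimate $\pnorm{P_X - J_{\hat\mu_K}}{F}\lesssim \sqrt{p}$ from the preceding paragraph. Combining these gives the claimed Berry--Esseen rate.

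For part (2), I would invoke Theorem \ref{thm:power_lrt_global_testing}(2). Let $\tilde A_\lambda(\mu) \equiv \{\pnorm{(X^\top X)^{-1}X^\top(\mu+\xi)}{1}\leq \lambda\}$, whose $\Prob_\mu$-complement has probability $\mathfrak{p}_{\lambda,\mu}$. By Lemma \ref{lem:invariance_lrt} applied after translating by $\mu \in \mathrm{col}(X)$, on $\tilde A_\lambda(\mu)$ we have $\hat\mu_K = \mu + P_X\xi$ and $J_{\hat\mu_K} = P_X$. Using the representation \eqref{def:m_mu} for $m_\mu$ together with splits on $\tilde A_\lambda(\mu)$ analogous to part (1), the assumption $n(\mathfrak{p}_{\lambda,\mu}^{1/2}\vee\mathfrak{p}_{\lambda,\mu_0}^{1/2}) = \mathfrak{o}(1)$ yields $m_\mu - m_{\mu_0} = \pnorm{\mu - \mu_0}{}^2 + \mathfrak{o}(\sqrt p)$ and $\sigma_{\mu_0}^2 = 2p(1+\mathfrak{o}(1))$. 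The regularity condition \eqref{cond:power_lrt_global_testing_0} is automatic because $\pnorm{\mu-\mu_0}{}\leq \pnorm{\mu-\mu_0}{}^2\vee \sqrt{p}$ always entails $\pnorm{\mu-\mu_0}{}\ll \pnorm{\mu-\mu_0}{}^2\vee \sqrt{p}$ as $p\to\infty$. Applying \eqref{cond:power_lrt_global_testing_1}, power consistency is then equivalent to $\pnorm{\mu-\mu_0}{}^2/\sqrt{p}\to +\infty$, i.e., $\pnorm{\mu-\mu_0}{}\gg p^{1/4}$.

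The principal technical obstacle is controlling error terms on the small events $A_\lambda^c$ and $\tilde A_\lambda(\mu)^c$ on which the Lasso constraint is active. There the Jacobian $J_{\hat\mu_K}$ depends delicately on the active set and has no simple closed form; fortunately, only \emph{crude} a.s.\ bounds of the right order in $\mathfrak{p}_{\lambda,\cdot}$ are needed. These are supplied by the $1$-Lipschitz property of $\Pi_K$ from Lemma \ref{lem:proj_basic} together with $\rank(J_{\hat\mu_K})\leq p$, used in conjunction with Cauchy--Schwarz and standard Gaussian moment estimates.
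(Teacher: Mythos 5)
Your proposal is essentially correct, and the high-level plan (specialize Theorems \ref{thm:lrt_clt_global_testing} and \ref{thm:power_lrt_global_testing} via the observation that on $\{\|\hat\theta^0\|_1 \leq \lambda\}$ the constraint is inactive and $\hat\mu_K = P_X Y$) matches the paper's. Where you genuinely diverge is in the key technical step, the lower bound on $\pnorm{\E_{\mu_0}J_{\hat\mu_K}}{F}^2$ (the paper's Proposition \ref{prop:lasso_estimates}-(1)). The paper invokes the explicit face-decomposition of the Jacobian due to \cite{kato2009degrees}, writes $J_{\hat\mu_K}$ as $\bm{1}_{\hat\theta^0 \in E_0^\circ}Z_0 + \sum_{m,\ell}\bm{1}_{\hat\theta^0 \in E^\circ_{m,\ell}}Z_{m,\ell}$ with each $Z_{m,\ell}$ a projection, and then bounds entrywise using $\abs{(Z_{m,\ell})_{ij}}\leq 1$, yielding $p/2 - 4(n\mathfrak{p}_{\lambda,\mu_0})^2$. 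Your route is more elementary: writing $\E_{\mu_0}J_{\hat\mu_K} = P_X - \E_{\mu_0}[(P_X - J_{\hat\mu_K})\bm{1}_{A_\lambda^c}]$, using the image-in-$\mathrm{col}(X)$ rank bound together with $\|J_{\hat\mu_K}\|\vee\|P_X\|\leq 1$ from Lemma \ref{lem:proj_basic}-(2) to get $\|J_{\hat\mu_K}-P_X\|_F \lesssim \sqrt p$ a.e., and expanding the Frobenius norm. This avoids the Kato machinery entirely and actually yields a slightly sharper error term, of order $p\mathfrak{p}_{\lambda,\mu_0}$ rather than $(n\mathfrak{p}_{\lambda,\mu_0})^2$, though both are of the right order under $n\mathfrak{p}_{\lambda,\mu_0}^{1/2} = \mathfrak{o}(1)$. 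Your estimates of the numerator $\E_{\mu_0}\pnorm{\hat\mu_K-\mu_0}{}^2$ and of $m_\mu$, $\sigma_{\mu_0}^2$ for part (2) mirror the paper's and are fine.

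One flaw to fix: in part (2), you cite Lemma \ref{lem:invariance_lrt} to conclude that $\hat\mu_K = \mu + P_X\xi$ on $\tilde A_\lambda(\mu)$. That lemma requires $K-\mu\subset K$, which fails here since $K_{X,\lambda}$ is an $\ell_1$ ball in $\mathrm{col}(X)$, not a cone, and is not invariant under translation by $\mu$. The conclusion you want is still true, but by the same direct argument as in part (1): when $\|(X^\top X)^{-1}X^\top(\mu+\xi)\|_1\leq\lambda$, the constrained Lasso coincides with OLS, so $\hat\mu_K = P_X(\mu+\xi) = \mu + P_X\xi$ because $\mu\in\mathrm{col}(X)$. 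Replace the lemma citation with this observation and the argument goes through.
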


The proof of Theorem \ref{thm:lrt_clt_global_testing_lasso} relies on the following proposition, which may be of independent interest (see Section~\ref{pf:lrt_clt_global_testing_lasso} for a proof).

\begin{proposition}\label{prop:lasso_estimates}
	The following hold:
	\begin{enumerate}
		\item $\pnorm{\E_{\mu_0} J_{\hat{\mu}_{K_{X,\lambda}}}}{F}^2 \geq p/2-4 \big(n \mathfrak{p}_{\lambda,\mu_0}\big)^2$.
		\item For any $\mu\in K_{X,\lambda}$, $\bigabs{\E_\mu \dv \hat{\mu}_{K_{X,\lambda}}-p }\leq 2p\cdot  \mathfrak{p}_{\lambda,\mu}$.
		\item For any $\mu\in K_{X,\lambda}$, $\bigabs{\E_\mu\pnorm{\hat{\mu}_{K_{X,\lambda}}-\mu}{}^2-p}\leq C n \mathfrak{p}_{\lambda,\mu}^{1/2}$.
	\end{enumerate}
	Here $C>0$ is an absolute constant.
\end{proposition}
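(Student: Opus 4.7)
The overall plan is to split on the event $A \equiv \{\|\hat{\theta}^0\|_1 \leq \lambda\}$, on which the $\ell_1$ constraint in (\ref{def:lasso_constrained}) is inactive and hence the constrained Lasso coincides locally with the ordinary least squares fit. More precisely, on the open set $\{\|\hat{\theta}^0(Y)\|_1 < \lambda\}$ the minimizer $\hat{\theta}(Y) = \hat{\theta}^0(Y)$ in a neighborhood of $Y$, so $\hat{\mu}(Y) = P Y$ and $J_{\hat{\mu}}(Y) = P$ almost everywhere on $A$, where $P \equiv X(X^\top X)^{-1}X^\top$ is the projection onto $\mathrm{col}(X)$; the boundary contributes Lebesgue measure zero. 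Outside $A$ I will rely on three generic structural facts about $J_{\hat{\mu}}$, all to be harvested from Lemma \ref{lem:proj_basic}: (i) since $\hat{\mu} = y - \tfrac12 \nabla G(y)$, $J_{\hat{\mu}} = I - \tfrac12 \mathrm{Hess}(G)$ is symmetric a.e.; (ii) the bounds $\|J_{\hat{\mu}}\|\vee \|I - J_{\hat{\mu}}\| \leq 1$ combined with symmetry force the eigenvalues of $J_{\hat{\mu}}$ into $[0,1]$, so $J_{\hat{\mu}}$ is PSD with $0 \leq \tr J_{\hat{\mu}} \leq p$ a.e.; (iii) since $\hat{\mu}(Y) \in \mathrm{col}(X)$ everywhere, differentiating gives $PJ_{\hat{\mu}} = J_{\hat{\mu}}$ a.e.

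Parts (2) and (3) then fall out by direct computation. For (2), the decomposition $\E_\mu \dv \hat{\mu} = p\,\Prob_\mu(A) + \E_\mu[\dv \hat{\mu}\cdot \bm{1}_{A^c}]$ combined with $0 \leq \dv \hat{\mu} \leq p$ yields $|\E_\mu \dv \hat{\mu} - p| \leq p\,\mathfrak{p}_{\lambda,\mu} + p\,\mathfrak{p}_{\lambda,\mu} = 2p\,\mathfrak{p}_{\lambda,\mu}$. For (3), since $\mu \in K_{X,\lambda}$ gives $P\mu = \mu$, on $A$ we have $\|\hat{\mu}-\mu\|^2 = \|P\xi\|^2$, so
\[
\bigabs{\E_\mu\|\hat{\mu}-\mu\|^2 - p} \leq \E_\mu\bigl[\|P\xi\|^2\bm{1}_{A^c}\bigr] + \E_\mu\bigl[\|\hat{\mu}-\mu\|^2\bm{1}_{A^c}\bigr].
\]
Cauchy–Schwarz handles the first term using $\E\|P\xi\|^4 \lesssim p^2 \leq n^2$. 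For the second, the projection inequality $\|\hat{\mu}-Y\| \leq \|\mu - Y\|$ (valid since $\mu \in K_{X,\lambda}$) yields $\|\hat{\mu}-\mu\| \leq 2\|\xi\|$, so $\E\|\hat{\mu}-\mu\|^4 \lesssim n^2$, and another Cauchy–Schwarz gives the $\lesssim n\, \mathfrak{p}_{\lambda,\mu}^{1/2}$ bound.

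The main step is Part (1), and the crux is exploiting the PSD structure of $J_{\hat{\mu}}$. Writing $\E_{\mu_0}J_{\hat{\mu}} = (1 - \mathfrak{p}_{\lambda,\mu_0})P + R$ with $R \equiv \E_{\mu_0}[J_{\hat{\mu}}\bm{1}_{A^c}]$, the identity $PJ_{\hat{\mu}} = J_{\hat{\mu}}$ yields $PR = R$, and hence $\langle P, R\rangle_F = \tr(PR) = \tr R$. Since $R$ is a convex mixture of PSD matrices, $R$ is PSD, so $\tr R \geq 0$. Expanding the Frobenius norm squared,
\[
\|\E_{\mu_0}J_{\hat{\mu}}\|_F^2 = p(1-\mathfrak{p}_{\lambda,\mu_0})^2 + 2(1-\mathfrak{p}_{\lambda,\mu_0})\tr R + \|R\|_F^2 \geq p(1-\mathfrak{p}_{\lambda,\mu_0})^2 \geq p - 2p\,\mathfrak{p}_{\lambda,\mu_0}.
\]
A Young-type estimate $2p\,\mathfrak{p}_{\lambda,\mu_0} \leq p/2 + 2p\,\mathfrak{p}_{\lambda,\mu_0}^2$, together with $p\leq n$, then gives $p(1-\mathfrak{p}_{\lambda,\mu_0})^2 \geq p/2 - 2n^2\mathfrak{p}_{\lambda,\mu_0}^2 \geq p/2 - 4(n\,\mathfrak{p}_{\lambda,\mu_0})^2$. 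The hard part—and the only place genuine insight is needed—is recognizing that one must discard the cross term $2(1-\mathfrak{p})\tr R$ using the PSD property rather than bound it via $\|R\|_F$; the naive route gives only $\|R\|_F \leq \sqrt{n}\,\mathfrak{p}_{\lambda,\mu_0}$, which is much too weak when $p \ll n$.
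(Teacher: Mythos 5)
Your proof is correct. Parts (2) and (3) are essentially the same as the paper's: both split on the event that the $\ell_1$ constraint is inactive (the paper writes this as $\{\hat{\theta}^0\in E_0^\circ\}$, you as $A$), identify $\hat{\mu}$ with the ordinary projection $PY$ on that event, and control the complement via the trivial bound $0\le \tr J_{\hat\mu}\le p$ for part (2) and via Cauchy--Schwarz together with the projection inequality $\|\hat\mu-\mu\|\le 2\|\xi\|$ for part (3).

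Part (1) is where you genuinely depart from the paper. The paper invokes Kato's explicit representation of $J_{\hat\mu}$ as a mixture of orthogonal projection matrices indexed by the faces of the $\ell_1$ ball, then argues \emph{entrywise}: it uses $|(Z_{m,\ell})_{ij}|\le 1$ and the elementary inequality $(a-b)_+^2\ge a^2/2-b^2$ to lower bound $\sum_{i,j}(\E J_{\hat\mu})_{ij}^2$. You instead write $\E_{\mu_0}J_{\hat\mu}=(1-\mathfrak p)P+R$ and exploit only three \emph{structural} properties of $R=\E[J_{\hat\mu}\bm 1_{A^c}]$: it is PSD, satisfies $PR=R$ (hence $\langle P,R\rangle_F=\tr R\ge 0$), and can be dropped along with $\|R\|_F^2$ to give $\|\E J_{\hat\mu}\|_F^2\ge p(1-\mathfrak p)^2$. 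This is cleaner, avoids the face enumeration entirely, and reveals that the key point is the sign of the cross term rather than any quantitative control of $R$. It also buys generality: the argument would apply verbatim to $\Pi_K$ for any closed convex $K\subset\mathrm{col}(X)$ once one knows the Jacobian is symmetric PSD a.e.

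Two small caveats. First, your parenthetical that the ``naive route'' via $\|R\|_F$ fails is overstated: using $\|J_{\hat\mu}\|_F^2\le\tr J_{\hat\mu}\le p$ (eigenvalues in $[0,1]$, rank $\le p$) one gets $\|R\|_F\le\sqrt p\,\mathfrak p$, and Cauchy--Schwarz on the cross term then gives $2|\langle P,R\rangle_F|\le 2p\mathfrak p$, which is enough; the real gain of your argument is avoiding the entrywise calculation, not that a Frobenius-norm route is doomed. Second, your derivation of a.e.\ symmetry of $J_{\hat\mu}$ from ``$J_{\hat\mu}=I-\tfrac12\mathrm{Hess}(G)$'' is a bit cavalier: a Lipschitz gradient's a.e.\ Jacobian is not automatically symmetric, and the clean justification is either (a) Alexandrov's theorem applied to the convex potential $\phi(y)=\tfrac12\|y\|^2-\tfrac12\mathrm{dist}(y,K)^2$ with $\nabla\phi=\Pi_K$, or (b) in the polyhedral setting at hand, Kato's formula, which exhibits $J_{\hat\mu}$ a.e.\ as an orthogonal projection matrix. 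Since $K_{X,\lambda}$ is polyhedral this is not a gap, only a point you should tighten before publication.
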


The proof of the above proposition makes essential use of an explicit representation of the Jacobian $J_{\hat{\mu}_{K_{X,\lambda}}}$ derived in \cite{kato2009degrees}, which complements its analogues for Lasso in the penalized form derived in \cite{zou2007degrees,tibshirani2012degrees}.

\begin{remark}
	A few remarks are in order.
	\begin{enumerate}
		\item (\textit{Choice of $\lambda$}) To apply Theorem \ref{thm:lrt_clt_global_testing_lasso}, we need to control the probability term $ \mathfrak{p}_{\lambda,\mu}$ for a generic $\mu = X\theta\in K_{X,\lambda}$. This can be done via the following exponential inequality (see Lemma \ref{lem:l1_deviation}): for any $t\geq 1$,
		\begin{align*}
		\Prob_\mu\bigg(\|\hat{\theta}^0\|_1 \geq \pnorm{\theta}{1} +t \sqrt{\frac{p}{n\lambda_{\min}(\Sigma)}}\bigg) \leq e^{-t^2/C}.
		\end{align*}
		Here $C > 0$ is a universal constant and $\lambda_{\min}(\Sigma)$ is the smallest eigenvalue of $\Sigma$. Therefore, for any choice of the tuning parameter $\lambda$ satisfying
		\begin{align}\label{ineq:lambda_lasso}
		\lambda \geq \pnorm{\theta_0}{1}+ r_n,\textrm{ with } r_n\equiv C \sqrt{p \log n/\big(n\lambda_{\min}(\Sigma)\big)}
		\end{align}
	    for a large enough constant $C>0$, we have $n\cdot(\mathfrak{p}_{\lambda,\mu}^{1/2}\vee \mathfrak{p}_{\lambda,\mu_0}^{1/2}) = \mathfrak{o}(1)$ uniformly in $\mu \in K_{X,\lambda-r_n }$.  Hence Theorem \ref{thm:lrt_clt_global_testing_lasso} yields that the LRT is asymptotically size $\alpha$ and power consistent for all such prescribed $\mu$'s if and only if $\pnorm{\mu-\mu_0}{}\gg p^{1/4}$. To get some intuition for this result, for the tuning parameters $\lambda$ satisfying (\ref{ineq:lambda_lasso}) above, the proof of Proposition \ref{prop:lasso_estimates} shows that the Jacobian $J_{\hat{\mu}_{K_{X,\lambda}}}$ of $\hat{\mu}_{K_{X,\lambda}}$ (cf. Equation (\ref{ineq:jacobian_lasso})) is close to that of the least squares estimator $\hat{\theta}^0$ with high probability. From this perspective, the separation rate $p^{1/4}$ is quite natural under (\ref{ineq:lambda_lasso})  in view of Proposition \ref{prop:global_testing_subspace_toy} with $\rank(X)=p$ in the current setting.
		\item (\textit{Lasso in penalized form}) Theorem \ref{thm:lrt_clt_global_testing_lasso} is applicable for Lasso in its constrained form as defined in (\ref{def:lasso_constrained}). The penalized form of Lasso
		\begin{align}\label{def:lasso_pen}
		\hat{\theta}_{\mathrm{pen}}(\tau)\equiv \argmin_{\theta \in \R^p} \bigg[\frac{1}{2}\pnorm{Y-X\theta}{}^2+\tau\pnorm{\theta}{1}\bigg],
		\end{align}			
		however, does not fit into our general testing framework (\ref{eq:testing_generic}), and therefore there is no natural associated `likelihood ratio test'. An interesting problem is to study the behavior of the statistic $T(Y)$ defined in (\ref{def:TY}) with $\hat{\mu}_K$ replaced by the penalized Lasso estimator $\hat{\mu}_{\textrm{pen}}(\tau)\equiv X \hat{\theta}_{\textrm{pen}}(\tau)$. The major hurdle here is to, as in Proposition \ref{prop:lasso_estimates}-(1), evaluate a lower bound for the Frobenius norm of the the expected Jacobian $\E J_{\hat{\mu}_{\mathrm{pen}}(\tau)}=\E X_{\hat{S}(\tau)}(X_{\hat{S}(\tau)}^\top X_{\hat{S}(\tau)})^{-1} X_{\hat{S}(\tau)}$ (see e.g. \cite[Proposition 3.10]{bellec2018second}), where $\hat{S}(\tau)$ is the (random) support of $\hat{\theta}_{\mathrm{pen}}(\tau)$. Although the penalized form (\ref{def:lasso_pen}) is known to be `equivalent' to the constrained form (\ref{def:lasso_constrained}) in that for each given $\tau>0$, there exists some data-dependent $\lambda=\lambda(\tau,X,Y)>0$ such that $\hat{\theta}(\lambda)= \hat{\theta}_{\mathrm{pen}}(\tau)$, due to the random correspondence of $\tau$ and $\lambda$, the techniques used to prove Proposition \ref{prop:lasso_estimates} do not translate to a lower bound for $\pnorm{\E J_{\hat{\mu}_{\mathrm{pen}}(\tau)}}{F}^2$.  We leave this for a future study.

	\end{enumerate}
\end{remark}

\subsection{Testing parametric assumptions versus shape-constrained alternatives}\label{subsec:example_subspace}

For fixed $k\in \mathbb{Z}_{\geq 0}$ and $n\geq k+2$, and consider the testing problem
\begin{align}\label{eq:testing_k_monotone}
H_0: \mu \in K_{0,k}\quad \textrm{versus}\quad H_1: \mu \in K_{\uparrow,k}.
\end{align}
Here  $K_{\uparrow,k}\equiv \{\mu \in \R^n: \nabla^{k+1}\mu\geq 0\}$ and $K_{0,k}\equiv \{\mu \in \R^n: \nabla^{k+1}\mu= 0\}$, with $\nabla:\R^n\to \R^{n-1}$ denoting the difference operator defined by $\nabla (\mu_i)_{i=1}^n \equiv (\mu_{i+1}-\mu_i)_{i=1}^{n-1}$, and $\nabla^{k+1} \equiv \nabla\circ\cdots \circ\nabla: \R^n \to \R^{n-k-1}$ with $k+1$ compositions. It can be readily verified that $K_{0,k}$ is a subspace of dimension $k+1$, $K_{\uparrow,k}$ is a closed and convex cone, and $K_{0,k}\subset K_{\uparrow,k}\subset \R^n$. Hence (\ref{eq:testing_k_monotone}) is a special case of the general testing problem (\ref{eq:testing_special}).

Testing a parametric model against a nonparametric alternative has previously been studied in \cite{cox1988testing,eubank1990testing,azzalini1993use,hardle1993comparing,stute1997nonparametric,fan2001goodness,guerre2005data,christensen2010alternative,neumeyer2010estimating,sen2017testing} among which the shape-constrained alternatives in (\ref{eq:testing_k_monotone}) are sometimes preferred since the model fits therein usually do not involve the choice of tuning parameters. In particular: 
\begin{enumerate}
	\item When $k=0$, (\ref{eq:testing_k_monotone}) becomes:
	\begin{align*}
	H_0: \mu \hbox{ is `constant'}, \qquad \textrm{versus}\qquad H_1: \mu \hbox{ is `monotone'}.
	\end{align*}
	\item When $k=1$, (\ref{eq:testing_k_monotone}) becomes:
	\begin{align*}
	H_0: \mu \hbox{ is `linear'}, \qquad \textrm{versus}\qquad H_1: \mu \hbox{ is `convex'}.
	\end{align*}
\end{enumerate}
The above two settings have previously been considered in~\cite{bartholomew1959test,bartholomew1959test2,robertson1988order,sen2017testing}.

\begin{theorem}\label{thm:clt_subspace_1d}
	Fix $k\in \mathbb{Z}_{\geq 0}$. Consider testing~\eqref{eq:testing_k_monotone} using the two-sided LRT $\Psi_{\mathrm{ts}} (Y;m_0,\sigma_0)$, as  in~\eqref{def:LRT_two_sided}.
	\begin{enumerate}
		\item There exists a constant $C>0$, depending on $k$ only, such that for $\mu \in K_{0,k}$, 
		\begin{align*}
		d_{\mathrm{TV}}\bigg( \frac{T(Y)-m_0}{\sigma_0 },\mathcal{N}(0,1)\bigg)
		&\leq \frac{C}{\bm{1}_{k=0}\sqrt{\log (en)}+\bm{1}_{k\geq 1}\sqrt{\log \log (16n)}}.
		\end{align*}
		\sloppy Consequently for $\mu \in K_{0,k}$, the LRT is asymptotically size $\alpha$ with $\E_{\mu} \Psi_{\mathrm{ts}} (Y;m_0,\sigma_0)=\alpha+\mathcal{O}\big(\bm{1}_{k=0}(\log (en))^{-1/2}+\bm{1}_{k\geq 1} (\log \log (16n))^{-1/2}\big)$.
		\item For $\mu \in K_{\uparrow,k}$ with $\pnorm{\mu-\Pi_{K_{0,k}}(\mu)}{}\gg \log^{1/4} (en)$, the LRT is power consistent under $\mu$, i.e., $\E_{\mu} \Psi_{\mathrm{ts}} (Y;m_0,\sigma_0)\to 1$.
	\end{enumerate}
\end{theorem}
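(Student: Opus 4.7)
The plan is to apply the master theorems \ref{thm:lrt_clt_pivotal} and \ref{thm:power_lrt_subspace} directly to the pair $(K_0, K) = (K_{0,k}, K_{\uparrow,k})$. Since $K_{0,k}$ consists of the grid values of polynomials of degree $\leq k$, it is a subspace of dimension $k+1$, so $\delta_{K_{0,k}} = k+1$; moreover $K_{\uparrow,k}$ is a closed convex cone containing $K_{0,k}$. Both statements therefore reduce to two-sided control of the statistical dimension $\delta_{K_{\uparrow,k}}$.

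For Part (1), Theorem \ref{thm:lrt_clt_pivotal} bounds the total variation distance by $8/\sqrt{\delta_{K_{\uparrow,k}} - (k+1)}$, so the target is the lower bound $\delta_{K_{\uparrow,k}} - (k+1) \gtrsim \log(en)$ for $k=0$ and $\gtrsim \log\log(16n)$ for $k \geq 1$. When $k=0$ this follows from the classical closed form $\delta_{K_{\uparrow,0}} = \sum_{i=1}^n 1/i$, derivable from the symmetry of the isotonic projection at a constant mean. When $k \geq 1$ I would exhibit a family of $m_n \asymp \log n$ unit vectors $\{v_j\}_{j \leq m_n} \subset K_{\uparrow,k}$ whose Gaussian marginals $\{\iprod{v_j}{\xi}\}$ are sufficiently weakly correlated, and then use the representation $\delta_{K_{\uparrow,k}} = \E \sup_{v \in K_{\uparrow,k} \cap B(1)} \iprod{v}{\xi}^2 \geq \E \max_{j \leq m_n} \iprod{v_j}{\xi}^2$ combined with a Sudakov-type lower bound for Gaussian processes to deduce $\delta_{K_{\uparrow,k}} \gtrsim \log m_n \asymp \log\log n$.

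For Part (2), Theorem \ref{thm:power_lrt_subspace}(2) together with the quadratic lower bound $\Gamma_{K_{\uparrow,k},2}(\nu) \geq \pnorm{\nu}{}^2$ from Remark \ref{rmk:subspace_one_sided_LRT}(2) reduces the problem to verifying
\[
\frac{\pnorm{\mu - \Pi_{K_{0,k}}(\mu)}{}^2}{\sqrt{\delta_{K_{\uparrow,k}} - (k+1)}} \to \infty
\]
under the hypothesis $\pnorm{\mu - \Pi_{K_{0,k}}(\mu)}{} \gg \log^{1/4}(en)$. Given this hypothesis, the conclusion holds as soon as $\delta_{K_{\uparrow,k}} \lesssim_k \log n$. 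For $k=0$ the harmonic sum provides this directly; for $k \geq 1$ I would apply Dudley's entropy integral to the canonical Gaussian process $v \mapsto \iprod{v}{\xi}$ on $K_{\uparrow,k} \cap B(1)$, combined with well-known $\varepsilon^{-1/(k+1)}$-type metric-entropy estimates for $k$-monotone sequences on a uniform grid, which integrate to a polylogarithmic-in-$n$ bound of the required $\lesssim_k \log n$ order.

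The principal obstacle is the lower bound on $\delta_{K_{\uparrow,k}}$ for $k \geq 1$: unlike the $k=0$ case, no clean closed form is available, so one must construct a family of nearly orthogonal $k$-monotone unit vectors whose Gaussian supremum admits a Sudakov-type lower bound. The matching upper bound via Dudley's inequality is more routine, but still requires identifying the correct metric-entropy estimate for each $k$.
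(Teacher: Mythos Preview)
Your reduction to two-sided bounds on $\delta_{K_{\uparrow,k}}$ and the invocation of Theorems \ref{thm:lrt_clt_pivotal} and \ref{thm:power_lrt_subspace} exactly match the paper. Your lower-bound strategy for $k\ge 1$ (a Sudakov-type argument on $\asymp\log n$ well-separated points in $K_{\uparrow,k}\cap B(1)$, yielding $\delta_{K_{\uparrow,k}}\gtrsim\log\log n$) is also the paper's route; the paper applies Sudakov's minorization directly and cites a concrete $\asymp\log n$-point packing construction.

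The upper bound for $k\ge 1$ is where you diverge from the paper, and your plan has a gap. An entropy estimate of the form $\log N(\varepsilon,K_{\uparrow,k}\cap B(1),\|\cdot\|)\asymp\varepsilon^{-1/(k+1)}$ cannot integrate via Dudley to $\sqrt{\log n}$: the diameter of $K_{\uparrow,k}\cap B(1)$ is at most $2$, and $\int_0^2 \varepsilon^{-1/(2(k+1))}\,d\varepsilon$ is a finite constant for every $k\ge 0$, which would (incorrectly) force $\delta_{K_{\uparrow,k}}=\mathcal{O}(1)$. The $\varepsilon^{-1/(k+1)}$ rate is the right one for \emph{uniformly bounded} $k$-monotone functions in $L_2$, but the discrete cone $K_{\uparrow,k}\cap B(1)\subset\R^n$ has a different entropy profile, and extracting the $\log n$ factor from a Dudley integral is not as routine as you suggest. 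The paper sidesteps this entirely with an inductive argument: one writes $K_{\uparrow,k+1}=\bigcup_{\ell=1}^n K_{\uparrow,k+1;\ell}$, where each $K_{\uparrow,k+1;\ell}$ is (after a sign flip on one side) a product of two lower-order $k$-monotone cones, so $\delta_{K_{\uparrow,k+1;\ell}}\lesssim_k\log n$ by the inductive hypothesis; Gaussian concentration plus a union bound over the $n$ values of $\ell$ then gives $\delta_{K_{\uparrow,k+1}}\lesssim_k\log n$, with the union bound supplying exactly the $\log n$ that survives.
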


The key step in the proof of Theorem \ref{thm:clt_subspace_1d} (proved in Section~\ref{pf:clt_subspace_1d}) is to obtain the correct order of the statistical dimension $\delta_{K_{\uparrow,k}}$. The discrepancy between $k = 0$ and $k\geq 1$ in claim (1) is due to the fact that while a universal upper bound of the order $\log (en)$ can be proved for any fixed $k\geq 0$, only a lower bound of the order $\log\log(16 n)$ can be proved for $k\geq 1$. We conjecture that the correct order of $\delta_{K_{\uparrow,k}}$ should be $\log (en)$ for all fixed $k\geq 0$.

The above theorem can be easily extended to the multi-dimensional analogue of (\ref{eq:testing_k_monotone}) in the context of, e.g., testing constancy versus coordinate-wise monotonicity, linearity versus multi-dimensional convexity, by using results of \cite{han2017isotonic,kur2020convex}; we omit the details here.

\section{Proofs of results in Section~\ref{section:clt_LRT}}\label{section:proof_clt}

\subsection{Proof of Theorem \ref{thm:lrt_clt_global_testing}}\label{pf:lrt_clt_global_testing}

We need the following proposition, which can be proved using techniques similar to \cite[Theorem 2.1]{goldstein2017gaussian}. We provide the details of its proof in Appendix \ref{section:proof_lrt_clt} for the convenience of the reader.

\begin{proposition}\label{prop:lrt_clt}
	Suppose $K_0,K$ are two non-empty closed convex sets in $\R^n$. Let
	\begin{align*}
	T_{K_0,K}(y)\equiv \pnorm{y - \Pi_{K_0}(y)}{}^2-\pnorm{y - \Pi_{K}(y)}{}^2.
	\end{align*}
	Then for any $\mu\in \R^n$, under the model (\ref{model:sequence}),
	\begin{align*}
	d_{\mathrm{TV}}\bigg( \frac{T_{K_0,K}(Y)-\E_\mu T_{K_0,K}(Y) }{ \sqrt{\var_\mu (T_{K_0,K}(Y))} },\mathcal{N}(0,1)\bigg)\leq \frac{16 \sqrt{\E_\mu \pnorm{\hat{\mu}_{K}-\hat{\mu}_{K_0}}{}^2} }{\var_\mu(T_{K_0,K}(Y))}.
	\end{align*}
\end{proposition}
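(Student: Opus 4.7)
The plan is to establish the bound by a Gaussian second-order Poincar\'e type inequality in the vein of \cite{chatterjee2009fluctuations}, as executed by \cite{goldstein2017gaussian} for the related functional $y \mapsto \pnorm{\Pi_K(y)}{}^2$, adapted to the difference-of-projections functional $T_{K_0,K}$. The argument splits into three steps.

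\emph{Step 1 (Reduction to the standard Gaussian).} For any closed convex set $L\subset \R^n$ and any $\mu\in\R^n$, one checks directly from the definition of metric projection that $\Pi_L(\mu+\xi) = \mu + \Pi_{L-\mu}(\xi)$, and hence $\pnorm{(\mu+\xi)-\Pi_L(\mu+\xi)}{}^2 = \pnorm{\xi-\Pi_{L-\mu}(\xi)}{}^2$. Applying this to $L\in\{K_0,K\}$ gives $T_{K_0,K}(\mu+\xi) = T_{K_0-\mu,\,K-\mu}(\xi)$, so the mean, variance, and total variation distance appearing in the claim all coincide with their counterparts under the standard Gaussian law. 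It therefore suffices to prove the bound with $\mu=0$; in what follows I relabel $K_0' = K_0-\mu$, $K' = K-\mu$, $T = T_{K_0',K'}$, and work on $\xi \sim \mathcal{N}(0,I_n)$.

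\emph{Step 2 (Gradient and Jacobian identities).} By Lemma~\ref{lem:proj_basic}(1), $y\mapsto \pnorm{y-\Pi_L(y)}{}^2$ is absolutely continuous with gradient $2(y-\Pi_L(y))$ of sub-exponential growth. Consequently $T$ is absolutely continuous with
\begin{align*}
\nabla T(y) = 2\bigl(\Pi_{K'}(y) - \Pi_{K_0'}(y)\bigr),
\end{align*}
so, translating back, $\E\pnorm{\nabla T(\xi)}{}^2 = 4\,\E_\mu\pnorm{\hat{\mu}_K-\hat{\mu}_{K_0}}{}^2$. Moreover $\nabla T$ is Lipschitz with an a.e.\ Jacobian $J_T(y) = 2\bigl(J_{\Pi_{K'}}(y)-J_{\Pi_{K_0'}}(y)\bigr)$ whose operator norm is uniformly bounded by $4$ thanks to Lemma~\ref{lem:proj_basic}(2).

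\emph{Step 3 (Second-order Poincar\'e bound).} For a Gaussian functional $F$ whose gradient is absolutely continuous of sub-exponential growth and whose a.e.\ Jacobian $J_{\nabla F}$ is essentially bounded in operator norm, the Stein-kernel realization of the second-order Poincar\'e inequality via the Ornstein-Uhlenbeck semigroup (as in \cite{chatterjee2009fluctuations}, refined along the lines of \cite{goldstein2017gaussian}) produces a bound of the form
\begin{align*}
d_{\mathrm{TV}}\!\left(\frac{F(\xi)-\E F(\xi)}{\sqrt{\var F(\xi)}},\, \mathcal{N}(0,1)\right) \;\leq\; \frac{2\,\bigl(\sup_y \pnorm{J_{\nabla F}(y)}{\mathrm{op}}\bigr)\sqrt{\E\pnorm{\nabla F(\xi)}{}^2}}{\var(F(\xi))},
\end{align*}
where the $\sup$ is an essential supremum. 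Specializing to $F=T$ and plugging in the identities of Step 2 ($\sup\pnorm{J_T}{\mathrm{op}}\leq 4$ and $\sqrt{\E\pnorm{\nabla T(\xi)}{}^2} = 2\sqrt{\E_\mu\pnorm{\hat{\mu}_K-\hat{\mu}_{K_0}}{}^2}$) produces the constant $2\cdot 4\cdot 2 = 16$ in the numerator, which is exactly the claimed bound.

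\emph{Main obstacle.} The technical subtlety is that $T$ is only $C^1$ with an a.e.\ defined Hessian, so the smooth-function form of the second-order Poincar\'e inequality cannot be invoked verbatim. I would handle this by mollification: convolve $T$ with a Gaussian kernel of variance $\epsilon \downarrow 0$ to obtain smooth approximants $T_\epsilon$; the contraction and Jacobian bounds in Lemma~\ref{lem:proj_basic}(2) transfer to the smoothed objects with the same constant, and the sub-exponential growth from Lemma~\ref{lem:proj_basic}(1) supplies the integrability needed to interchange limits and expectations. Applying the smooth-function bound to each $T_\epsilon$ and then passing $\epsilon\downarrow 0$ via dominated convergence, together with lower semicontinuity of $d_{\mathrm{TV}}$ under weak convergence, yields the claim. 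The rest is bookkeeping of constants.
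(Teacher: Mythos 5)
Your proposal follows essentially the same route as the paper's proof: a second-order Poincar\'e argument in the spirit of \cite{chatterjee2009fluctuations}, as implemented in \cite{goldstein2017gaussian}, combined with the gradient identity $\nabla T_{K_0,K}(y)=2\big(\Pi_K(y)-\Pi_{K_0}(y)\big)$ and the a.e.\ Jacobian bound $\pnorm{J_{\Pi_K}-J_{\Pi_{K_0}}}{}\leq 2$ from Lemma \ref{lem:proj_basic}. Your Step 1 is correct but unnecessary: the paper simply applies the inequality to $F(\xi)=T_{K_0,K}(\mu+\xi)$, whose gradient already has the required form, so no translation reduction is needed.

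The one substantive soft spot is Step 3. The ``generic'' inequality you quote there --- essential supremum of the Hessian times the \emph{second} moment of the gradient, with constant $2$ --- is not an off-the-shelf result you can cite: Chatterjee's smooth-function second-order Poincar\'e bound involves fourth moments of $\nabla F$ and $\nabla^2 F$ (with constant $2\sqrt{5}$), and the second-moment form has to be rederived. That rederivation is exactly the content of the paper's proof: it invokes \cite[Lemma 5.3]{chatterjee2009fluctuations} (Lemma \ref{lem:sec_poincare} here), which needs only absolute continuity of $F$, forms the Ornstein--Uhlenbeck-type functional $T(\xi)=\int_0^1 (2\sqrt{t})^{-1}\iprod{\nabla F(\xi)}{\E_{\xi'}\nabla F(\sqrt{t}\xi+\sqrt{1-t}\xi')}\,\d{t}$, and bounds $\var(T(\xi))$ by the first-order Gaussian--Poincar\'e inequality using the Jacobian bound. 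Carrying out that computation gives a constant larger than the $2$ you assert (the paper's own bookkeeping yields $\var(T(\xi))\leq 16^2\,\E_\mu\pnorm{\hat{\mu}_K-\hat{\mu}_{K_0}}{}^2$, which combined with the factor $2$ in Lemma \ref{lem:sec_poincare} in fact produces $32$ rather than the stated $16$), so your exact factor $2\cdot 4\cdot 2=16$ is asserted rather than derived; since only a universal constant matters downstream this is minor, but the quantitative step should be written out. Relatedly, the mollification in your closing paragraph is not needed on this route: because Lemma \ref{lem:sec_poincare} requires no second derivatives and the Poincar\'e inequality is applied directly to the Lipschitz integrand defining $T$, the a.e.\ Jacobians from Lemma \ref{lem:proj_basic}-(2) suffice and no smooth-function form of the inequality ever has to be invoked.
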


The next lemma provides a lower bound for the variance of $F(\xi)$, where the absolute continuity of $F:\R^n \to \R$ is valid up to its first derivatives. The proof is based on Fourier analysis in the Gaussian space in the spirit of~\cite[Proposition 1.5.1]{nourdin2012normal}. 

\begin{lemma}\label{lem:var_lower_bound_fourier}
	Let $F:\R^n\to \R$ be such that $\{\partial_{\bm{k}} F: \abs{\bm{k}}\leq 1\}$ are absolutely continuous and $\{\partial_{\bm{k}} F: \abs{\bm{k}}\leq 2\}$ have sub-exponential growth at $\infty$. Then  
	\begin{align*}
	\var(F(\xi))\geq \sum_i\big(\E \partial_i F(\xi)\big)^2+\sum_{i\neq j} \big(\E \partial_{ij} F(\xi)\big)^2+\frac{1}{2}\sum_i \big(\E \partial_{ii} F(\xi)\big)^2.
	\end{align*}
\end{lemma}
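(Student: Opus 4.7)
The plan is to use the orthonormal basis of $L^2(\R^n,\gamma_n)$ given by the multivariate (probabilists') Hermite polynomials
$\bar H_{\bm k}(x) = H_{\bm k}(x)/\sqrt{\bm k!}$, where $H_{\bm k}(x) \equiv \prod_{i=1}^n H_{k_i}(x_i)$, $\bm k! \equiv \prod_i k_i!$, and $H_k(t) = (-1)^k e^{t^2/2}\tfrac{d^k}{dt^k}e^{-t^2/2}$. Since the $\bar H_{\bm k}$ form an ONB, Parseval yields
\begin{equation*}
\var(F(\xi)) = \sum_{\abs{\bm k}\geq 1} \frac{\bigl(\E[F(\xi)H_{\bm k}(\xi)]\bigr)^2}{\bm k!}.
\end{equation*}
The strategy is then to identify each Fourier coefficient with an expected partial derivative via the Gaussian integration-by-parts formula $\E[F(\xi) H_{\bm k}(\xi)] = \E[\partial_{\bm k} F(\xi)]$, keep only the terms with $\abs{\bm k}\in\{1,2\}$, and drop the rest (which are nonnegative).

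First, I would establish the integration-by-parts identity $\E[F(\xi)H_{\bm k}(\xi)] = \E[\partial_{\bm k} F(\xi)]$ for $\abs{\bm k}\leq 2$. Applied coordinate by coordinate, this follows from the one-dimensional identity $\E[g(Z)H_k(Z)] = \E[g^{(k)}(Z)]$ for $Z\sim\mathcal{N}(0,1)$, obtained by iterating $\E[g(Z)Z] = \E[g'(Z)]$. The hypothesis that $\{\partial_{\bm k}F:\abs{\bm k}\leq 1\}$ are absolutely continuous makes the iteration up to second order legitimate, and the sub-exponential growth at infinity of $\{\partial_{\bm k}F:\abs{\bm k}\leq 2\}$ ensures integrability against the Gaussian density as well as vanishing boundary terms in each one-dimensional integration by parts. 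Carrying this out in each coordinate in turn yields the identity for all $\bm k$ with $\abs{\bm k}\leq 2$.

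Next, I would isolate the low-order contributions in the Parseval expansion. The terms with $\abs{\bm k}=1$ correspond to $\bm k=e_i$ ($\bm k!=1$), giving the contribution $\sum_i(\E\partial_i F(\xi))^2$. The terms with $\abs{\bm k}=2$ split into (i) $\bm k = e_i+e_j$ for $i\neq j$, with $\bm k!=1$ and $\partial_{\bm k}F=\partial_{ij}F$, contributing $\sum_{i<j}(\E\partial_{ij}F(\xi))^2=\tfrac12\sum_{i\neq j}(\E\partial_{ij}F(\xi))^2$ when one accounts that the multi-indices $e_i+e_j$ and $e_j+e_i$ are the same, and (ii) $\bm k = 2e_i$, with $\bm k!=2$ and $\partial_{\bm k}F=\partial_{ii}F$, contributing $\tfrac12\sum_i(\E\partial_{ii}F(\xi))^2$. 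Discarding all $\bm k$ with $\abs{\bm k}\geq 3$ (whose contribution is nonnegative) yields the claimed lower bound.

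The main technical obstacle is the rigorous justification of the second-order Gaussian integration by parts under only absolute continuity of the first derivatives (rather than genuine twice differentiability). This is handled by standard truncation/mollification: approximate $F$ by smooth functions $F_\epsilon$ (e.g. convolutions with a Gaussian mollifier) for which the classical Stein identity iterates trivially, pass to the limit using the sub-exponential growth bounds to justify dominated convergence for the integrals defining $\E[F_\epsilon H_{\bm k}]$ and $\E[\partial_{\bm k}F_\epsilon]$, and note that both sides of the identity are preserved in the limit thanks to the absolute continuity hypothesis. Everything else is bookkeeping in the Hermite expansion.
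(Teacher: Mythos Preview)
Your approach is essentially identical to the paper's: expand $F$ in the Hermite basis of $L^2(\gamma_n)$, use Gaussian integration by parts to identify the Fourier coefficients $\E[F(\xi)H_{\bm k}(\xi)]=\E[\partial_{\bm k}F(\xi)]$ for $|\bm k|\le 2$, and apply Plancherel while discarding the higher-order terms.

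One bookkeeping remark: your computation correctly yields $\tfrac12\sum_{i\ne j}(\E\partial_{ij}F)^2$ for the off-diagonal second-order contribution, not the full $\sum_{i\ne j}$ written in the statement. Indeed, with the usual convention that $\sum_{i\ne j}$ runs over ordered pairs, the stated inequality would fail for $F(x)=x_1x_2$, where $\var(F(\xi))=1$ but the right-hand side would equal $2$. The paper's proof arrives at exactly the same Parseval sum $\sum_{|\bm k|\le 2}(\E F H_{\bm k})^2/\bm k!$ and simply asserts it ``equals the right hand side of the claimed inequality,'' so the intended reading is evidently a sum over unordered pairs (equivalently, with a factor $\tfrac12$). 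This does not affect anything downstream: the only place the lemma is used (the proof of Theorem~\ref{thm:lrt_clt_global_testing}) invokes the weaker bound $\tfrac12\sum_{i,j}(\E\partial_{ij}F)^2$, which is exactly what your argument delivers.
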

\begin{proof}
	We only need to verify the above claimed inequality for $\E F(\xi)=0$. 
	Let $H_k(x) = (-1)^k e^{x^2/2} \frac{\mathrm{d}^k}{\mathrm{d}{x^k}}e^{-x^2/2}$ be the Hermite polynomial of order $k$. For a multi-index $\bm{k}=(k_1,\ldots,k_n)$ and $y \in \R^n$, let $H_{\bm{k}}(y)\equiv \prod_{i=1}^n H_{k_i}(y_i)$. Then $\{H_{\bm{k}}: \bm{k} \in \mathbb{Z}_{\geq 0}^n\}$ is a complete orthogonal basis of $L_2(\gamma_n)$, where $\gamma_n$ is the standard Gaussian measure on $\R^n$. On the other hand, the absolute continuity and growth condition on $F$ ensures the validity of the following Gaussian integration-by-parts: For all multi-indices $\bm{k}$ such that $\abs{\bm{k}}\leq 2$,
	\begin{align*}
	\E\big[F(\xi)H_{\bm{k}}(\xi)\big] = \E \partial_{\bm{k}} F(\xi).
	\end{align*}
	As $\E \abs{H_{\bm{k}}(\xi)}^2=\bm{k}!$, it follows by Plancherel's theorem that
	\begin{align*}
	\var(F(\xi)) = \E F^2(\xi) \geq \sum_{ \bm{k}: \abs{\bm{k}}\leq 2} \frac{ \big(\E F(\xi) H_{\bm{k}}(\xi) \big)^2 }{\E \abs{H_{\bm{k}}(\xi)}^2 },
	\end{align*}
	which equals the right hand side of the claimed inequality.
\end{proof}

\begin{proof}[Proof of Theorem \ref{thm:lrt_clt_global_testing}]
	Let 
	\begin{align*}
	F(\xi)\equiv T(\mu_0+\xi) = \pnorm{\mu_0+\xi-\mu_0}{}^2 - \pnorm{\mu_0+\xi-\Pi_K(\mu_0+\xi)}{}^2.
	\end{align*}
	By Lemma \ref{lem:proj_basic}-(1),
	\begin{align*}
	\nabla F(\xi)=\nabla T (\mu_0+\xi) = 2\big(\Pi_K(\mu_0+\xi)-\mu_0\big).
	\end{align*}
	Hence
	\begin{align*}
	\partial_{ij} F(\xi)=\partial_{ij} T(\mu_0+\xi) =2 (J_{\Pi_K}(\mu_0+\xi))_{ji}.
	\end{align*}
	We verify that $F$ satisfies the condition of Lemma \ref{lem:var_lower_bound_fourier}. By the above closed-form expression of $F$ and $\nabla F$, the absolute continuity for $\{\partial_{\bm{k}} F: \abs{\bm{k}}\leq 1\}$ holds by noting that $\nabla F$ is $2$-Lipschitz. On the other hand, as
	\begin{align*}
	\abs{F(\xi)}&=\big\lvert \pnorm{\xi}{}^2 - \pnorm{\mu_0+\xi-\Pi_K(\mu_0+\xi)}{}^2 \big\lvert \leq C\cdot \big(\pnorm{\xi}{}^2\vee \pnorm{\mu_0}{}^2 \big),\\
	\pnorm{\nabla F(\xi)}{}& \leq C\cdot \big(\pnorm{\mu_0}{}\vee \pnorm{\xi}{}\big),\qquad \pnorm{\nabla^2 F(\xi)}{} = 2\pnorm{J_{\Pi_K}(\mu_0+\xi)^\top}{}\leq 2,
	\end{align*}
	it follows that $\{\partial_{\bm{k}} F: \abs{\bm{k}}\leq 2\}$ have sub-exponential growth at $\infty$. Now we may apply Lemma \ref{lem:var_lower_bound_fourier} to see that
	\begin{align*}
	\sigma_{\mu_0}^2=\var(T(Y))&\geq \sum_i \big(\E \partial_i F(\xi)\big)^2+\frac{1}{2}\sum_{i,j} \big(\E \partial_{ij} F(\xi)\big)^2\\
	& = 4 \pnorm{\E \Pi_K(\mu_0+\xi)-\mu_0}{}^2+ 2\sum_{i,j} \big(\E J_{\Pi_K}(\mu_0+\xi)\big)_{ij}^2,
	\end{align*}
	as desired. The claim of the theorem now follows from Proposition \ref{prop:lrt_clt}.
\end{proof}

\subsection{Proof of Theorem  \ref{thm:power_lrt_global_testing}}\label{section:proof_master_thm}

A simple but important observation in the proof of Theorem \ref{thm:power_lrt_global_testing} is the following.
\begin{proposition}\label{prop:concentration_mean_shift}
	Let
	\begin{align*}
	Z(\mu,\mu_0)&\equiv \Delta T_{\mu,\mu_0}(\xi)- \E(\Delta T_{\mu,\mu_0})\\
	&=T(\mu+\xi)-T(\mu_0+\xi)-(m_\mu-m_{\mu_0}).
	\end{align*}
	Then for any $t\geq 0$,
	\begin{align*}
	\Prob\big(Z(\mu,\mu_0)>t\big)\vee \Prob\big(Z(\mu,\mu_0)<-t\big)\leq  \exp\bigg(-\frac{t^2}{8\pnorm{\mu-\mu_0}{}^2}\bigg).
	\end{align*}
\end{proposition}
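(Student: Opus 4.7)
The plan is to recognize $Z(\mu,\mu_0)$ as a centered Lipschitz function of the Gaussian vector $\xi$, and then apply the standard Gaussian concentration inequality for Lipschitz functions (Borell–TIS), which yields sub-Gaussian tails with variance proxy equal to the squared Lipschitz constant.

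Concretely, set $f(\xi) \equiv T(\mu+\xi) - T(\mu_0+\xi)$, so that $Z(\mu,\mu_0) = f(\xi) - \E f(\xi)$. Using the definition of $T$ in (\ref{def:TY}), I would write
\begin{align*}
f(\xi) = \pnorm{\mu+\xi-\mu_0}{}^2 - \pnorm{\mu_0+\xi-\mu_0}{}^2 - G(\mu+\xi) + G(\mu_0+\xi),
\end{align*}
where $G(y) = \pnorm{y-\Pi_K(y)}{}^2$. The gradient of the first difference is $2(\mu-\mu_0)$, while by Lemma~\ref{lem:proj_basic}(1), $\nabla G(y) = 2(y-\Pi_K(y))$, so
\begin{align*}
\nabla f(\xi) = 2(\mu-\mu_0) - 2\big[(\mu-\mu_0) - \big(\Pi_K(\mu+\xi) - \Pi_K(\mu_0+\xi)\big)\big] = 2\big(\Pi_K(\mu+\xi) - \Pi_K(\mu_0+\xi)\big).
\end{align*}
Invoking the $1$-Lipschitz property of $\Pi_K$ (recorded in Section~\ref{subsec:preliminary}) then yields $\pnorm{\nabla f(\xi)}{} \leq 2\pnorm{\mu-\mu_0}{}$ for almost every $\xi$, i.e., $f$ is $L$-Lipschitz on $\R^n$ with $L = 2\pnorm{\mu-\mu_0}{}$.

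Since $f$ is Lipschitz and $\xi$ is a standard Gaussian vector, the Gaussian concentration inequality for Lipschitz functions gives, for every $t\geq 0$,
\begin{align*}
\Prob\big(\pm(f(\xi) - \E f(\xi)) > t\big) \leq \exp\bigg(-\frac{t^2}{2L^2}\bigg) = \exp\bigg(-\frac{t^2}{8\pnorm{\mu-\mu_0}{}^2}\bigg),
\end{align*}
which is exactly the two-sided bound claimed for $Z(\mu,\mu_0)$.

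There is no real obstacle here beyond bookkeeping; the only mild subtlety is that $G$ is only a.e.\ differentiable (as $\Pi_K$ is merely $1$-Lipschitz, not $C^1$), but since $f$ itself is globally Lipschitz with the bound derived above holding almost everywhere, the Gaussian concentration inequality applies without modification. The moments $m_\mu, m_{\mu_0}$ are finite because $T$ is a.s.\ bounded by a quadratic in $\xi$, so $\E f(\xi)$ is well-defined and equals $m_\mu - m_{\mu_0}$, making the centering legitimate.
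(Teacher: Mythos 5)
Your proof is correct and matches the paper's argument essentially step for step: compute $\nabla_\xi\big(T(\mu+\xi)-T(\mu_0+\xi)\big)=2\big(\Pi_K(\mu+\xi)-\Pi_K(\mu_0+\xi)\big)$ via Lemma~\ref{lem:proj_basic}(1), bound its norm by $2\pnorm{\mu-\mu_0}{}$ using the $1$-Lipschitz property of $\Pi_K$, and invoke Gaussian concentration for Lipschitz functions. The only cosmetic difference is that you split $T$ into its two pieces and differentiate each separately, whereas the paper writes $\nabla T(y)=2(\Pi_K(y)-\mu_0)$ once and then differences; the content is identical.
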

\begin{proof}
	As 
	\begin{align*}
	\nabla T(y) = \nabla \big(\pnorm{y-\mu_0}{}^2-\pnorm{y-\Pi_K(y)}{}^2\big)=2(\Pi_K(y)-\mu_0)
	\end{align*}
	by Lemma \ref{lem:proj_basic}-(1), it follows that 
	\begin{align*}
	\pnorm{\nabla_\xi Z(\mu,\mu_0)}{}& = 2\pnorm{\Pi_K(\mu+\xi)-\Pi_K(\mu_0+\xi)}{}\leq 2\pnorm{\mu-\mu_0}{}. 
	\end{align*}
	The claim now follows by Gaussian concentration inequality for Lipschitz functions, cf. \cite[Theorem 5.6]{bousquet2003concentration}.
\end{proof}

\begin{lemma}\label{lem:normal_mean_multi}
	For any $t \in \R$, there exists some $C_t>0$ such that for all $u \in \R,\eta \in [-1/2,1/2]$, 
	\begin{align*}
	\bigabs{ \Prob\big( \mathcal{N}(u,1)\leq t\big)-\Prob\big( \mathcal{N}((1+\eta)u,1)\leq t\big)}\leq C_t\cdot |\eta|.
	\end{align*}
	Furthermore, $\sup_{t\in M}C_t < \infty$ for any compact subset $M$ of $\R$.
\end{lemma}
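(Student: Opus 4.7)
The plan is to reduce the claim to a one-dimensional calculus estimate. Writing $\Phi$ for the standard normal cdf, we have
\begin{align*}
\bigabs{ \Prob\big( \mathcal{N}(u,1)\leq t\big)-\Prob\big( \mathcal{N}((1+\eta)u,1)\leq t\big)}=\bigabs{\Phi(t-u)-\Phi\big(t-(1+\eta)u\big)},
\end{align*}
so by the mean value theorem applied to $\Phi$, the above quantity equals $\varphi(\xi)\cdot|\eta u|$ for some
\begin{align*}
\xi = t-(1+\theta\eta)u,\qquad \theta\in[0,1].
\end{align*}
Since $|\eta|\leq 1/2$, the multiplier $1+\theta\eta$ lies in $[1/2,3/2]$. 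Therefore it suffices to prove a bound of the form
\begin{align*}
\sup_{u\in\R}\;\sup_{s\in[1/2,3/2]}\; |u|\,\varphi(t-su) \leq C_t,
\end{align*}
with $C_t$ bounded on compact subsets of $\R$.

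To verify this, I would split into two regimes. In the regime $|u|\leq 4(|t|+1)$, the bound is immediate: $|u|\varphi(t-su)\leq 4(|t|+1)\cdot (2\pi)^{-1/2}$. In the regime $|u|>4(|t|+1)$, one has $|t-su|\geq s|u|-|t|\geq |u|/2-|t|\geq |u|/4$, so that
\begin{align*}
|u|\,\varphi(t-su)\leq \frac{|u|}{\sqrt{2\pi}}\,\exp\bigg(-\frac{u^2}{32}\bigg),
\end{align*}
and the right-hand side is bounded by an absolute constant (being the supremum over $\R$ of $x\mapsto x e^{-x^2/32}/\sqrt{2\pi}$). Combining the two regimes yields the desired bound with $C_t$ of the form $C(1+|t|)$ for some absolute $C>0$, which is obviously bounded uniformly on compact subsets of $\R$.

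No serious obstacle is anticipated; the only point requiring a bit of care is making the cutoff between the two regimes explicit enough that the prefactor $|u|$ in the Gaussian tail is tamed by the exponential, which is why the threshold is chosen proportional to $|t|+1$ rather than a fixed constant. This also tracks the dependence of $C_t$ on $t$ in a way that makes the second assertion (boundedness on compact sets) transparent.
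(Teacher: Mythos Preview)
Your proof is correct and takes essentially the same approach as the paper: both apply the mean value theorem to $\Phi$ and reduce the question to bounding $\sup_u \sup_{v\in[(t-u)\pm|u|/2]}|u|\varphi(v)$. The paper simply asserts that this supremum is finite and depends only on $t$, while you go further and carry out the explicit two-regime split to verify finiteness and track the dependence $C_t\lesssim 1+|t|$, which also makes the boundedness on compact sets transparent.
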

\begin{proof}
	We assume $\eta\geq0$ without loss of generality. Note that with $\varphi$ denoting the d.f. for standard normal,
	\begin{align*}
	\Prob\big( \mathcal{N}(0,1)\leq t-(1+\eta)u\big) & \leq \Prob\big(\mathcal{N}(0,1)\leq t-u\big)+\eta\cdot \sup_{v \in [(t-u)\pm \eta \abs{u}]}\varphi(v)  \abs{u}\\
	&\leq \Prob\big(\mathcal{N}(0,1)\leq t-u\big) + \eta\cdot C_t,
	\end{align*}
	where $C_t\equiv \sup_u \sup_{v \in [(t-u)\pm (\abs{u}/2)]}\varphi(v)  \abs{u}<\infty$ depends on $t$ only.
\end{proof}

\begin{proof}[Proof of Theorem \ref{thm:power_lrt_global_testing}]
	First note that under the model (\ref{model:sequence}), the normalized LRS $T(Y)$ satisfies the decomposition
	\begin{align}\label{ineq:decomposition_power_lrt_generic}
	\frac{T (Y)-m_{\mu_0}}{ \sigma_{\mu_0} } & = \frac{T(\mu+\xi)-T(\mu_0+\xi)}{\sigma_{\mu_0}}+\frac{T(\mu_0+\xi)-m_{\mu_0}}{\sigma_{\mu_0}}.
	\end{align}
	Using Proposition \ref{prop:concentration_mean_shift}, on an event $E_u$ with $\Prob(E_u)\geq 1-2e^{-u^2}$, we have $\abs{Z(\mu,\mu_0)}\leq 3u\pnorm{\mu-\mu_0}{}$ with $Z(\mu,\mu_0)$ defined therein. Then for any $t \in \R$, 
	\begin{align}\label{ineq:power_expansion_generic_1}
	&\Prob\bigg(\frac{T (\mu+\xi)-m_{\mu_0}}{ \sigma_{\mu_0} } \leq t\bigg) \nonumber\\
	&=\Prob\bigg(\frac{m_\mu-m_{\mu_0}+Z(\mu,\mu_0)}{\sigma_{\mu_0}}+\frac{T(\mu_0+\xi)-m_{\mu_0}}{\sigma_{\mu_0}} \leq t\bigg)\nonumber\\
	& \leq \Prob\bigg(\frac{m_\mu-m_{\mu_0}-3u\pnorm{\mu-\mu_0}{}}{\sigma_{\mu_0}}+\mathcal{N}(0,1) \leq t\bigg) +2e^{-u^2}+\textrm{err}_{\mu_0}\\
	&= \Prob\bigg(\frac{m_\mu-m_{\mu_0}}{\sigma_{\mu_0}}\big(1+\eta(u)\big)+\mathcal{N}(0,1) \leq t\bigg) +2e^{-u^2}+\textrm{err}_{\mu_0},\nonumber
	\end{align}
	where
	\begin{align*}
	\eta(u) \equiv  -3u\cdot \frac{ \pnorm{\mu-\mu_0}{} }{ m_\mu-m_{\mu_0}}. 
	\end{align*}
	By choosing $u \leq \abs{m_\mu-m_{\mu_0}}/\big(6\pnorm{\mu-\mu_0}{}\big) $, we have $\abs{\eta(u)}\leq 1/2$, so we may apply Lemma \ref{lem:normal_mean_multi} to see that,
	\begin{align*}
	\Delta^\ast &\equiv \Prob\bigg(\frac{T (\mu+\xi)-m_{\mu_0}}{ \sigma_{\mu_0} } \leq t\bigg)-\Prob\bigg(\frac{m_\mu-m_{\mu_0}}{\sigma_{\mu_0}}+\mathcal{N}(0,1) \leq t\bigg) \\
	&\leq 2e^{-u^2}+ C_t u\cdot \frac{ \pnorm{\mu-\mu_0}{} }{ \abs{m_\mu-m_{\mu_0}} }+\textrm{err}_{\mu_0}.
	\end{align*}
	Optimizing $u \leq  \abs{m_\mu-m_{\mu_0}}/\big(6\pnorm{\mu-\mu_0}{})$, the first two terms in the error bound above can be bounded, up to an absolute constant, by
	\begin{align*}
	(1\vee C_t)\cdot \mathscr{L}\bigg( 1\bigwedge\frac{\pnorm{\mu-\mu_0}{}}{\abs{m_\mu-m_{\mu_0}}} \bigg).
	\end{align*}
	Next we will obtain a similar upper bound for $\Delta^\ast$, but replacing $\abs{m_\mu-m_{\mu_0}}$ in the above display by $\sigma_{\mu_0}$. To see this, (\ref{ineq:power_expansion_generic_1}) along with
	\begin{align*}
	&\Prob\bigg(\frac{m_\mu-m_{\mu_0}-3u\pnorm{\mu-\mu_0}{}}{\sigma_{\mu_0}}+\mathcal{N}(0,1) \leq t\bigg)\\
	&\quad \leq \Prob\bigg(\frac{m_\mu-m_{\mu_0}}{\sigma_{\mu_0}}+\mathcal{N}(0,1) \leq t\bigg) + \|\varphi\|_\infty\cdot\frac{3u\pnorm{\mu-\mu_0}{}}{\sigma_{\mu_0}}
	\end{align*}
	yields that
	\begin{align*}
	\Delta^\ast&\leq \inf_{u>0} \bigg\{2e^{-u^2}+ 3 u\cdot \frac{ \pnorm{\mu-\mu_0}{} }{\sigma_{\mu_0} }\bigg\}+\textrm{err}_{\mu_0}\leq C\cdot \mathscr{L}\bigg( 1\bigwedge\frac{\pnorm{\mu-\mu_0}{}}{\sigma_{\mu_0} } \bigg)+ \textrm{err}_{\mu_0}.
	\end{align*}
	Similar lower bounds can be derived. Applying the above arguments to the (at most 2) end point(s) of $\mathcal{A}_\alpha$ proves the inequality (\ref{ineq:power_lrt_global_testing_expansion}). Now (1) is a direct consequence of (\ref{ineq:power_lrt_global_testing_expansion}), while (2) follows by further noting $\Delta_{A_\alpha}(w_n)\to \beta$ if and only if all limit points of the sequence $\{w_n\}$ are contained in $\Delta_{A_\alpha}^{-1}(\beta)$. 
\end{proof}

\subsection{Proof of Theorem \ref{thm:lrt_clt_pivotal}}\label{pf:lrt_clt_pivotal}
By Lemma \ref{lem:invariance_lrt}, we only need to consider $\mu=0$. Note that: (i) $\pnorm{\xi-\Pi_{K'}(\xi)}{}^2 =\pnorm{\xi}{}^2- \pnorm{\Pi_{K'}(\xi)}{}^2$ for $K' \in \{K_0,K\}$, (ii) $(K_0,K)$ is a non-oblique pair of closed convex cones in that $\Pi_{K_0} = \Pi_{K_0}\circ \Pi_K$, so $\Pi_K(\xi) = \Pi_{K_0}(\xi)+ \Pi_{K\cap K_0^\ast} (\xi)$ with $\iprod{\Pi_{K_0}(\xi)}{\Pi_{K\cap K_0^\ast} (\xi)}=0$ (cf.~\cite[Equation (25)]{wei2019geometry}), and hence $\pnorm{\Pi_{K}(\xi)}{}^2 = \pnorm{\Pi_{K_0}(\xi)}{}^2+ \pnorm{\Pi_{K\cap K_0^\ast} (\xi)}{}^2$. Thus,
\begin{align*}
\E \pnorm{\Pi_K(\xi)-\Pi_{K_0}(\xi)}{}^2& = \E \pnorm{ \Pi_{K\cap K_0^\ast} (\xi)}{}^2\\
& = \E \big[\pnorm{\Pi_{K}(\xi)}{}^2 -\pnorm{\Pi_{K_0}(\xi)}{}^2 \big] = \delta_K-\delta_{K_0},
\end{align*}
and
\begin{align*}
\sigma_0^2 &= \var\big(\pnorm{\Pi_{K_0}(\xi)}{}^2-\pnorm{\Pi_{K}(\xi)}{}^2\big) \\
&= \var \big(\pnorm{\Pi_{K\cap K_0^\ast} (\xi)}{}^2 \big)\stackrel{(\ast)}{\geq} 2 \delta_{K\cap K_0^\ast} = 2\big(\delta_K-\delta_{K_0}\big).
\end{align*}
Here the inequality $(\ast)$ follows by Lemma \ref{lem:var_proj}-(2). The claim now follows from Proposition \ref{prop:lrt_clt}. \qed

\subsection{Proof of Theorem~\ref{thm:power_lrt_subspace}}\label{pf:power_lrt_subspace} 
First note that we have the decomposition
\begin{align}\label{ineq:decomposition_power_lrt}
\frac{T (\mu+\xi)-m_0}{ \sigma_0 } & = \frac{T(\mu+\xi)-T(\xi)}{\sigma_{0}}+\frac{T(\xi)-m_0}{\sigma_0}. 
\end{align}
As
\begin{align*}
m_\mu& = \E \big[\pnorm{\mu+\xi-\Pi_{K_0}(\mu+\xi)}{}^2-\pnorm{\mu+\xi-\Pi_K(\mu+\xi)}{}^2\big]\\
& = \E \bigg[ \pnorm{\Pi_{K_0}(\mu+\xi)-\mu}{}^2-2\iprod{\xi}{\Pi_{K_0}(\mu+\xi)}+\pnorm{\xi}{}^2\\
&\qquad\qquad  -\bigg(\pnorm{\Pi_{K}(\mu+\xi)-\mu}{}^2-2\iprod{\xi}{\Pi_{K}(\mu+\xi)}+\pnorm{\xi}{}^2\bigg) \bigg]\\
& = \bigg\{\pnorm{\mu-\Pi_{K_0}(\mu)}{}^2 + 2\E\iprod{\xi}{\Pi_{K}(\mu+\xi)}-\E \pnorm{\Pi_{K}(\mu+\xi)-\mu}{}^2\bigg\} - \delta_{K_0},
\end{align*}
we have (as $\delta_K = \E \pnorm{\Pi_K(\xi)}{}^2 = \E\iprod{\xi}{\Pi_K(\xi)}$)
\begin{align*}
m_\mu- m_0 & = \E \bigg[2\iprod{\xi}{\Pi_{K}(\mu+\xi)}- \pnorm{\Pi_{K}(\mu+\xi)-\mu}{}^2-\iprod{\xi}{\Pi_{K}(\xi)}\bigg] +\pnorm{\mu-\Pi_{K_0}(\mu)}{}^2\\
& = \E \bigg[2\iprod{\xi}{\Pi_{K}(\mu-\Pi_{K_0}(\mu)+\xi)}\\
&\qquad\qquad - \pnorm{\Pi_{K}(\mu-\Pi_{K_0}(\mu)+\xi)-\big(\mu-\Pi_{K_0}(\mu)\big)}{}^2-\iprod{\xi}{\Pi_{K}(\xi)}\bigg]\\
&\qquad\qquad +\pnorm{\mu-\Pi_{K_0}(\mu)}{}^2\qquad\qquad\qquad\qquad \hbox{ (by Lemma \ref{lem:invariance_lrt})}\\
& = \E \bigg[2\iprod{\mu-\Pi_{K_0}(\mu)+\xi}{\Pi_{K}(\mu-\Pi_{K_0}(\mu)+\xi)}\\
&\qquad\qquad - \pnorm{\Pi_{K}\big(\mu-\Pi_{K_0}(\mu)+\xi\big)}{}^2-\pnorm{\mu-\Pi_{K_0}(\mu)}{}^2-\iprod{\xi}{\Pi_{K}(\xi)}\bigg]\\
&\qquad\qquad +\pnorm{\mu-\Pi_{K_0}(\mu)}{}^2\\
& = \E \pnorm{\Pi_{K}\big(\mu-\Pi_{K_0}(\mu)+\xi\big)}{}^2 - \E \pnorm{\Pi_{K}(\xi)}{}^2 = \Gamma_{K,2}(\mu-\Pi_{K_0}(\mu)).
\end{align*}
Here in the last line of the above display we used that 
\begin{align*}
\E \iprod{\mu-\Pi_{K_0}(\mu)+\xi}{\Pi_{K}(\mu-\Pi_{K_0}(\mu)+\xi)}&= \E \pnorm{\Pi_{K}\big(\mu-\Pi_{K_0}(\mu)+\xi\big)}{}^2,\\
\E \iprod{\xi}{\Pi_{K}(\xi)} &= \E \pnorm{\Pi_{K}(\xi)}{}^2.
\end{align*}
Let
\begin{align*}
Z_0(\mu)\equiv T(\mu+\xi)-T(\xi)-(m_\mu-m_0).
\end{align*}
As $\nabla T(y) = \nabla\big(\pnorm{y-\Pi_{K_0}(y)}{}^2-\pnorm{y-\Pi_K(y)}{}^2\big)=2(\Pi_K(y)-\Pi_{K_0}(y))$ by Lemma \ref{lem:proj_basic}-(1),
\begin{align*}
\nabla_\xi Z_0(\mu)
&= 2\big(\Pi_K(\mu+\xi)-\Pi_K(\xi)\big) - 2\big(\Pi_{K_0}(\mu+\xi)-\Pi_{K_0}(\xi)\big)\\
&= 2\big(\Pi_K(\mu-\Pi_{K_0}(\mu)+\xi)-\Pi_K(\xi)\big) \\
&\qquad- 2\big(\Pi_{K_0}(\mu-\Pi_{K_0}(\mu)+\xi)-\Pi_{K_0}(\xi)\big),\qquad \hbox{(by Lemma \ref{lem:invariance_lrt})}
\end{align*}
and hence
\begin{align*}
\pnorm{\nabla_\xi Z_0(\mu)}{}\leq 4\pnorm{\mu-\Pi_{K_0}(\mu)}{}.
\end{align*}
Now using the Gaussian concentration inequality for Lipschitz functions, cf. \cite[Theorem 5.6]{boucheron2013concentration}, it holds for any $t> 0$ that
\begin{align*}
\Prob\big(Z_0(\mu)>t\big)\vee \Prob\big(Z_0(\mu)<-t\big)\leq  \exp\bigg(-\frac{t^2}{32\pnorm{\mu-\Pi_{K_0}(\mu)}{}^2}\bigg).
\end{align*}
From here we may conclude (\ref{ineq:normal_power_expansion_subspace}) by using similar arguments as in the proof of Theorem \ref{thm:power_lrt_global_testing}. Furthermore, by the proof of \cite[Lemma E.1]{wei2019geometry}, $\Gamma_{K,2}(\nu)\geq \pnorm{\nu}{}^2\geq 0$ for any $\nu\in K$, so
\begin{align*}
\frac{\pnorm{\mu-\Pi_{K_0}(\mu) }{}}{ \bigabs{\Gamma_{K,2}\big(\mu-\Pi_{K_0}(\mu)\big)} \vee \sigma_0  } \stackrel{(\ast)}{\leq} \sup_{\nu \in K} \frac{\pnorm{\nu }{}}{ \Gamma_{K,2}(\nu)\vee \sigma_0  } \leq \sup_{\nu \in K} \frac{1}{ \pnorm{\nu}{} \vee (\sigma_0/\pnorm{\nu}{}) } \stackrel{(\ast\ast)}{\leq} \frac{1}{\sigma_0^{1/2}}.
\end{align*}
The inequality $(\ast)$ follows as $\mu- \Pi_{K_0}(\mu) \in K$ for $\mu \in K$, and $(\ast\ast)$ follows as $\inf_{\nu \in K} \big\{\pnorm{\nu}{} \vee (\sigma_0/\pnorm{\nu}{}) \big\} \geq \inf_{t\geq 0} \big\{t \vee (\sigma_0/t)\} =\sigma_0^{1/2}$. As $\sigma_0^2 \asymp \delta_K-\delta_{K_0}$, the second inequality (\ref{ineq:normal_power_expansion_subspace_1}) follows by the bound $\textrm{err}_0\leq 8\big(\delta_K - \delta_{K_0} \big)^{-1/2}$ via Theorem \ref{thm:lrt_clt_pivotal}.

Note that (1) is a direct consequence of (\ref{ineq:normal_power_expansion_subspace}) (as $\mathrm{err}_0$ can be bounded above by Theorem~\ref{thm:lrt_clt_pivotal} and $\mathscr{L}(0)=0$) so we prove (2) below. To see the claimed power characterization, note that
\begin{align*}
&\frac{ \E \pnorm{\Pi_{K}\big(\mu-\Pi_{K_0}(\mu)+\xi\big)}{}^2 - \E \pnorm{\Pi_{K}(\xi)}{}^2}{\sigma_{0}} \\
& = \frac{ \big(\E \pnorm{\Pi_{K}\big(\mu-\Pi_{K_0}(\mu)+\xi\big)}{}\big)^2-\big(\E \pnorm{\Pi_{K}(\xi)}{}\big)^2  }{\sigma_0 } + \mathcal{O}(\sigma_0^{-1})\\
& = \big(\E \pnorm{\Pi_{K}\big(\mu-\Pi_{K_0}(\mu)+\xi\big)}{} -\E \pnorm{\Pi_{K}(\xi)}{}  \big)\\
&\qquad\qquad \times \bigg[ \frac{  2\E \pnorm{\Pi_K(\xi)}{} }{\sigma_0} +\frac{\E \pnorm{\Pi_{K}\big(\mu-\Pi_{K_0}(\mu)+\xi\big)}{}-\E \pnorm{\Pi_K(\xi)}{} }{ \sigma_0}\bigg] + \mathcal{O}(\sigma_0^{-1})\\
& =  \Gamma_K\big(\mu-\Pi_{K_0}(\mu)\big) \bigg[2\sqrt{ \frac{\delta_K+\mathcal{O}(1)}{2\big(\delta_K-\delta_{K_0}\big) + \var\big(V_{K\cap K_0^*}\big) } }\\
&\qquad\qquad\qquad\qquad\qquad\qquad+\frac{\Gamma_K\big(\mu-\Pi_{K_0}(\mu)\big) }{\sigma_0}\bigg] + \mathcal{O}(\sigma_0^{-1})\\
& =  \Gamma_K\big(\mu-\Pi_{K_0}(\mu)\big)  \bigg[2\sqrt{ \frac{1+\mathcal{O}(\delta_K^{-1})}{ \big(2+ \var\big(V_{K\cap K_0^*}\big)/\delta_{K\cap K_0^\ast}\big)\cdot \big(1-\delta_{K_0}/\delta_K\big) } }\\
&\qquad\qquad\qquad\qquad\qquad\qquad +\frac{\Gamma_K\big(\mu-\Pi_{K_0}(\mu)\big) }{\sigma_0}\bigg] + \mathcal{O}(\sigma_0^{-1}).
\end{align*}
Under the growth condition $\sigma_0\to\infty$, direct calculation now entails that
\begin{align*}
&\frac{ \E \pnorm{\Pi_{K}\big(\mu-\Pi_{K_0}(\mu)+\xi\big)}{}^2 - \E \pnorm{\Pi_{K}(\xi)}{}^2}{\sigma_{0}}  \to w^\ast \in [0,+\infty] \\
\Leftrightarrow\quad &\frac{2\Gamma_K\big(\mu-\Pi_{K_0}(\mu)\big) }{ \sqrt{2+\var(V_{K\cap K_0^*})/\delta_{K\cap K_0^*}} \sqrt{1-\delta_{K_0}/\delta_K}}\to w^\ast \in [0,+\infty].
\end{align*}
The proof is now complete. \qed

\subsection{Proof of Corollary~\ref{cor:wwg}}\label{pf:wwg}
We will prove a slightly stronger (than (\ref{cond:power_lrt_subspace_2})) claim that condition (\ref{cond:wwg}) implies
\begin{align}\label{ineq:subspace_1}
\Gamma_K\big(\mu - \Pi_{K_0}(\mu)\big)\to \infty.
\end{align}
Suppose $\pnorm{\mu-\Pi_{K_0}(\mu)}{}$ is greater or equal than $L_n$ times the right hand side of (\ref{cond:wwg}) for some slowly growing sequence $L_n\uparrow\infty$. Then either (i) $\pnorm{\mu-\Pi_{K_0}(\mu)}{} \geq L_n \delta_{K}^{1/4}$, or (ii) $\pnorm{\mu-\Pi_{K_0}(\mu)}{}<L_n \delta_{K}^{1/4}$ and $\iprod{\mu-\Pi_{K_0}(\mu)}{\E \Pi_{K}(\xi)}\geq L_n \delta_{K}^{1/2}$. In both cases, we have $\|\mu - \Pi_{K_0}(\mu)\|\rightarrow \infty$ as there exists some universal constant $c_0 > 0$ such that the right hand side of (\ref{cond:wwg}) is bounded below by $c_0$. In case (i), using \cite[(74a)]{wei2019geometry},
\begin{align}\label{ineq:L_cal_1}
\nonumber\Gamma_{K}\big(\mu-\Pi_{K_{0}}(\mu)\big)&\geq \frac{ \pnorm{\mu-\Pi_{K_{0}}(\mu)}{}^2  }{ 2\pnorm{\mu-\Pi_{K_{0}}(\mu)}{}+8 \E \pnorm{\Pi_{K}(\xi)}{}}-2/\sqrt{e}\\
\nonumber&\geq (1/16) \pnorm{\mu-\Pi_{K_{0}}(\mu)}{} \bigwedge \Big(\pnorm{\mu-\Pi_{K_{0}}(\mu)}{}^2 /\delta_{K}^{1/2}\Big)-2/\sqrt{e}\\
&\geq (1/16) \pnorm{\mu-\Pi_{K_{0}}(\mu)}{} \bigwedge L_n^2-2/\sqrt{e}\rightarrow \infty
\end{align}
as $n\rightarrow\infty$, so (\ref{ineq:subspace_1}) is verified. 	In case (ii), we may assume without loss of generality that $\|\mu - \Pi_{K_0}(\mu)\| \leq L_n^{1/4}\delta_K^{1/4}$ because otherwise we can follow the same arguments as in the previous case. Then using \cite[(74b)]{wei2019geometry} with
\begin{align*}
\alpha\equiv \alpha\big(\mu-\Pi_{K_0}(\mu)\big)&=1-e^{-\iprod{\mu-\Pi_{K_0}(\mu)}{\E\Pi_{K}(\xi)}^2/8\pnorm{\mu-\Pi_{K_0}(\mu)}{}^2}\\
&\geq 1- e^{-\delta_K^{1/2}/8}\to 1,
\end{align*}
we have
\begin{align*}
\Gamma_{K}\big(\mu-\Pi_{K_{0}}(\mu)\big)&\geq \alpha\cdot\frac{\iprod{\mu - \Pi_{K_0}(\mu)}{\E \Pi_K \xi} - \|\mu - \Pi_{K_0}(\mu)\|^2}{\alpha\|\mu - \Pi_{K_0}(\mu)\| + 2\E\|\Pi_K(\xi)\|_2} - \frac{2}{\sqrt{e}}\\
&\gtrsim \frac{(L_n-L_n^{1/2})\delta_K^{1/2}}{L_n^{1/4} \delta_{K}^{1/4}+ \delta_{K}^{1/2} } - \mathcal{O}(1) \rightarrow\infty
\end{align*}
as $n\rightarrow\infty$, so (\ref{ineq:subspace_1}) is verified. The proof is complete. \qed

\section{Proofs of results in Section~\ref{section:example}}\label{section:proof_example}

\subsection{Proof of Theorem \ref{thm:lrt_clt_global_testing_orthant}}\label{pf:lrt_clt_global_testing_orthant}

\begin{lemma}\label{lem:gaussian_tail}
	Let $\xi_1$ be a standard normal random variable. Then for $x>0$,
	\begin{align*}
	\E[\xi_1\bm{1}_{\xi_1\geq x}]=\varphi(x), \quad \E[\xi_1^2\bm{1}_{\xi_1\geq x}]=x\varphi(x)+\int_{x}^\infty \varphi(y)\,\mathrm{d}{y}.
	\end{align*}
\end{lemma}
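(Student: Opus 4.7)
The plan is to prove both identities by direct computation, exploiting the defining ODE $\varphi'(y) = -y\varphi(y)$ of the standard normal density.

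For the first identity, I would write
\begin{align*}
\E[\xi_1 \bm{1}_{\xi_1 \geq x}] = \int_x^\infty y\,\varphi(y)\,\mathrm{d}{y} = -\int_x^\infty \varphi'(y)\,\mathrm{d}{y} = \varphi(x),
\end{align*}
where the last step uses $\varphi(y) \to 0$ as $y \to \infty$.

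For the second identity, the key is integration by parts with the factorization $y^2\varphi(y) = y \cdot (y\varphi(y)) = -y\varphi'(y)$. Concretely,
\begin{align*}
\E[\xi_1^2 \bm{1}_{\xi_1 \geq x}] = \int_x^\infty y^2 \varphi(y)\,\mathrm{d}{y} = -\int_x^\infty y\,\varphi'(y)\,\mathrm{d}{y} = \bigl[-y\varphi(y)\bigr]_x^\infty + \int_x^\infty \varphi(y)\,\mathrm{d}{y},
\end{align*}
which equals $x\varphi(x) + \int_x^\infty \varphi(y)\,\mathrm{d}{y}$ upon using $y\varphi(y) \to 0$ as $y \to \infty$.

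There is no real obstacle here; both identities are immediate consequences of the ODE $\varphi'(y) = -y\varphi(y)$ combined with the rapid decay of $\varphi$ at infinity. The only care needed is in the boundary term of the integration by parts, which vanishes because $y\varphi(y)$ decays super-exponentially at $+\infty$.
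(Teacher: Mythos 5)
Your proof is correct and follows exactly the same route as the paper's: using $\varphi'(y) = -y\varphi(y)$ and, for the second identity, integration by parts on $-\int_x^\infty y\varphi'(y)\,\mathrm{d}{y}$. The only difference is that you spell out the intermediate steps and boundary terms more explicitly, which the paper leaves implicit.
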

\begin{proof}
	The first equality follows as $\E[\xi_1\bm{1}_{\xi_1\geq x}] = \int_{x}^\infty y \varphi(y)\,\mathrm{d}{y} = \varphi(x)$. The second equality follows as $\E[\xi_1^2\bm{1}_{\xi_1 \geq x}] = \int_x^\infty y^2\varphi(y)\,\mathrm{d}{y} = -\int_x^\infty y\varphi'(y)\,\mathrm{d}{y} = x\varphi(x)+\int_x^\infty \varphi(y)\,\mathrm{d}{y}$. 
\end{proof}

\begin{proof}[Proof of Theorem \ref{thm:lrt_clt_global_testing_orthant}]
	Note that $\hat{\mu}_{K_{+} }  =\big((\mu_i+\xi_i)_+\big)$. For $\mu_0 \in K_{+}$, so
	\begin{align*}
	\E_{\mu_0} \pnorm{\hat{\mu}_{K_{+}} -\mu_0 }{}^2 &= \sum_{i=1}^n \E \Big[\big((\mu_0)_i+\xi_i\big)_+-(\mu_0)_i\Big]^2\\
	& =  \sum_{i=1}^n \Big[\E \xi_i^2 \bm{1}_{\xi_i\geq -(\mu_0)_i}+(\mu_0)_i^2 \Prob\big(\xi_i<-(\mu_0)_i\big)\Big].
	\end{align*}
	As $(\mu_0)_i\geq 0$ for $1\leq i\leq n$, and $\sup_{x>0} x^2 \Prob(\xi<-x)<\infty$, it follows that
	\begin{align*}
	\E_{\mu_0} \pnorm{\hat{\mu}_{K_{+}} -\mu_0 }{}^2 \asymp n.
	\end{align*}
	On the other hand, as under the null $J_{\hat{\mu}_{K_{+}}}=\big(\bm{1}_{i=j}\bm{1}_{\xi_i\geq -(\mu_0)_i}\big)_{ij}$,
	\begin{align}\label{ineq:orthant_global_testing_1}
	\pnorm{\E_{\mu_0} J_{\hat{\mu}_{K_{+}}}}{F}^2=\sum_{i,j} \big(\E_{\mu_0} J_{\hat{\mu}_{K_{+}}}\big)_{ij}^2 = \sum_{i=1}^n \big(\Prob(\xi_i\geq -(\mu_0)_i)\big)^2 \asymp n.
	\end{align}
	The claim (1) now follows from Theorem \ref{thm:lrt_clt_global_testing}.
	
	For (2), let for $x\geq 0$
	\begin{align*}
	Q(x)&\equiv \E \xi_1^2 \bm{1}_{\xi_1\geq -x}+2x \E \xi_1 \bm{1}_{\xi_1\geq-x}-x^2 \Prob\big(\xi_1<-x)\\
	& = 1- \E \xi_1^2 \bm{1}_{\xi_1\geq x} + 2x \E \xi_1 \bm{1}_{\xi_1\geq x} - x^2 \Prob\big(\xi_1\geq x\big)\\
	& = \int_{-\infty}^x \varphi(y)\,\mathrm{d}{y}+x\varphi(x)-x^2 \big(1-\Phi(x)\big).
	\end{align*}
	The last equality follows from Lemma \ref{lem:gaussian_tail}. Hence for all $x\geq 0$,
	\begin{align*}
	Q^\prime(x) &= 2\varphi(x)+x\varphi^\prime(x)- \big[2x(1-\Phi(x))-x^2\varphi(x)\big]\\
	& = 2 \varphi(x) - 2x\big(1-\Phi(x)\big),\\
	Q^{\prime\prime}(x) & = 2\Big[\varphi^\prime(x)-1+\Phi(x)+x\varphi(x)\Big] = 2\big(-1+\Phi(x)\big)<0.
	\end{align*}
	This means that $Q'$ is nonnegative, decreasing with $Q'(0)=2\varphi(0) = 2/\sqrt{2\pi}$ and $Q'(\infty) = 0$, and $Q$ is strictly increasing, concave and bounded on $[0,\infty)$ with $Q(0)=1/2$.

	Now note that for any $\mu \in K_{+}$, 
	\begin{align*}
	&m_\mu -\pnorm{\mu-\mu_0}{}^2\\
	& = \E \big[2\iprod{\xi}{\Pi_{K_{+}}(\mu+\xi)-\mu }-\pnorm{\Pi_{K_{+}}(\mu+\xi)-\mu}{}^2\big]\\
	& = \sum_{i=1}^n \bigg[2 \E \xi_i^2 \bm{1}_{\xi_i\geq -\mu_i}+2\mu_i \E \xi_i  \bm{1}_{\xi_i\geq -\mu_i} -\bigg(\E \xi_i^2 \bm{1}_{\xi_i\geq -\mu_i}+\mu_i^2 \Prob\big(\xi_i<-\mu_i)\big)\bigg)\bigg]\\
	& = \sum_{i=1}^n \bigg[\E \xi_i^2 \bm{1}_{\xi_i\geq -\mu_i} +2\mu_i \E \xi_i  \bm{1}_{\xi_i\geq -\mu_i} -\mu_i^2 \Prob\big(\xi_i<-\mu_i)\big)\bigg]=\sum_{i=1}^n Q(\mu_i).
	\end{align*}
	Using the lower bound (\ref{ineq:orthant_global_testing_1}) for $\sigma_{\mu_0}^2$, and an easy matching upper bound (by e.g. triangle inequality), we have $\sigma_{\mu_0}^2 \asymp n$. The condition (\ref{cond:power_lrt_global_testing_0}) reduces to
	\begin{align}\label{ineq:orthant_global_testing_2}
	\pnorm{\mu-\mu_0}{}\ll \biggabs{\sum_{i=1}^n\big\{\bar{S}_+(\mu_i)-\bar{S}_+((\mu_0)_i)\big\}+\pnorm{\mu-\mu_0}{}^2 }\bigvee n^{1/2}.
	\end{align}
	(\ref{ineq:orthant_global_testing_2}) clearly holds for $\pnorm{\mu-\mu_0}{}\ll n^{1/2}$. For $\pnorm{\mu-\mu_0}{}\gg n^{1/2}$, as
	\begin{align*}
	\biggabs{\sum_{i=1}^n\big\{\bar{S}_+(\mu_i)-\bar{S}_+((\mu_0)_i)\big\}} \leq (2/\sqrt{2\pi}) \pnorm{\mu-\mu_0}{1}\lesssim \sqrt{n} \pnorm{\mu-\mu_0}{},
	\end{align*}
	the right hand side of (\ref{ineq:orthant_global_testing_2}) is bounded from below by
	\begin{align*}
	\big(\pnorm{\mu-\mu_0}{}^2-C \sqrt{n} \pnorm{\mu-\mu_0}{}\big)_+\vee n^{1/2}\asymp \pnorm{\mu-\mu_0}{}^2\gg \pnorm{\mu-\mu_0}{},
	\end{align*}
	so (\ref{ineq:orthant_global_testing_2}) holds. Hence in these two regimes, the claim follows from Theorem \ref{thm:power_lrt_global_testing}-(2). For $\pnorm{\mu-\mu_0}{}\asymp n^{1/2}$, by the decomposition (\ref{ineq:decomposition_power_lrt_generic}), the LRT is powerful if and only if $\abs{m_\mu-m_{\mu_0}}/\sigma_{\mu_0}\to \infty$, i.e., $\bigabs{ \sum_{i=1}^n \big\{\bar{S}_+(\mu_i)-\bar{S}_+((\mu_0)_i)\big\} +\pnorm{\mu-\mu_0}{}^2}\gg n^{1/2}$. The proof is now complete.
\end{proof}

\subsection{Proof of Theorem \ref{thm:lrt_clt_global_testing_circular}}\label{pf:lrt_clt_global_testing_circular}
We write $K_{\times,\alpha}$ for $K_\times$ in the proof for notational convenience.	

\noindent (1). This claim follows from the fact that $\sigma_0 \asymp \delta_{K_\times}^{1/2} \asymp \delta_{K_\alpha}^{1/2}\asymp n^{1/2}$ (cf.~\cite[Section 6.3]{mccoy2014from}) and Theorem \ref{thm:power_lrt_subspace}-(1). 

\noindent (2)(a). We only need to prove that the LRT is not powerful for $\mu \in K_\alpha$ such that $\pnorm{\mu}{}=\mathcal{O}(1)$. Using the decomposition (\ref{ineq:decomposition_power_lrt}), it suffices to show $T(\mu+\xi)-T(\xi) = \mathcal{O}_{\mathbf{P}}(n^{1/2})$. This follows as
\begin{align*}
&T(\mu+\xi)-T(\xi)\\
& = \|\mu+\xi\|^2 - \|\mu+\xi - \Pi_{K_\alpha}(\mu+\xi)\|^2 - \big(\pnorm{\xi}{}^2-\pnorm{\xi-\Pi_{K_\alpha}(\xi)}{}^2\big)\\
& = \pnorm{\mu}{}^2+2\iprod{\mu}{\xi}- \pnorm{\Pi_{K_\alpha}(\xi)-\Pi_{K_\alpha}(\mu+\xi)+\mu }{}^2 \\
&\qquad -2\iprod{\xi- \Pi_{K_\alpha}(\xi) }{ \Pi_{K_\alpha}(\xi)-\Pi_{K_\alpha}(\mu+\xi)+\mu}\\
& = \mathcal{O}_{\mathbf{P}}\bigg( \pnorm{\mu}{}^2+ \pnorm{\mu}{}+\pnorm{\Pi_{K_\alpha}(\xi)-\Pi_{K_\alpha}(\mu+\xi) }{}^2 +\pnorm{\mu}{}^2\\
&\qquad  + \pnorm{ \xi- \Pi_{K_\alpha}(\xi) }{} \big[ \pnorm{\Pi_{K_\alpha}(\xi)-\Pi_{K_\alpha}(\mu+\xi) }{} \vee \pnorm{\mu}{}\big]\bigg)\\
& = \mathcal{O}_{\mathbf{P}}(n^{1/2}).
\end{align*} 
\noindent (2)(b). By (2)(a) and Theorem \ref{thm:power_lrt_subspace}-(2), we have $\pnorm{\mu^1}{}\gg 1$ if and only if 
\begin{align*}
\frac{ \E\|\Pi_{K_\alpha}(\mu^1 + \xi^1)\|^2 - \E\|\Pi_{K_\alpha}(\xi^1)\|^2 }{n^{1/2}} \rightarrow\infty.
\end{align*}
Now using Theorem \ref{thm:power_lrt_subspace}-(2) again for $K_\times$ to conclude by noting that
\begin{align*}
&\E\|\Pi_{K_{\times}}(\mu + \xi)\|^2 - \E\|\Pi_{K_{\times}}(\xi)\|^2 \\
&= \E\|\Pi_{K_\alpha}(\mu^1 + \xi^1)\|^2 - \E\|\Pi_{K_\alpha}(\xi^1)\|^2+ (\mu^2)^2.
\end{align*}
This completes the proof. \qed

\subsection{Proof of Theorem \ref{thm:lrt_clt_global_testing_iso}} \label{pf:lrt_clt_global_testing_iso}

We first prove Proposition \ref{prop:lower_bound_J_iso}. The following lemma will be used. We present its proof at the end of this subsection.

\begin{lemma}\label{lem:iso_localize}
	Fix $0.1n \leq i \leq 0.9n$. Let $u^\ast \leq i$ and $h_1^\ast \geq 0$ be defined through the following max-min formula for the isotonic LSE:
	\begin{align}\label{eq:max_min}
	\hat{\mu}_i &= \max_{u\leq i}\min_{v\geq i}\bar{Y}|_{[u,v]}  \equiv \min_{v\geq i}\bar{Y}|_{[u^\ast,v]} \equiv \min_{h_2\geq 0}\bar{Y}|_{[i - h_1^*n^{2/3}, i + h_2n^{2/3}]}.
	\end{align}
	Then there exists some $C = C(L)>0$ such that for any $t > 0$
	\begin{align*}
	\Prob\big(|\hat{\mu}_i - \mu_i| > n^{-1/3}t\big)\vee \Prob(h_1^\ast > t) \leq C\exp(-t^2/C).
	\end{align*} 
\end{lemma}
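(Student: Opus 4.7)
The argument will revolve around the max--min formula (\ref{eq:max_min}). Under condition (\ref{cond:lrt_clt_iso}), one obtains the drift estimate
\[-\frac{L(i-u)}{2n} \leq \bar{\mu}|_{[u,v]} - \mu_i \leq -\frac{i-u}{2Ln} + \frac{L(v-i)}{2n}\]
for any $u \leq i \leq v$ (up to lower-order $O(1/n)$ terms), where $\bar{\mu}|_{[u,v]}$ and $\bar{\xi}|_{[u,v]}$ denote the averages of $(\mu_j)$ and $(\xi_j)$ on $[u,v]$; the noise satisfies $\bar{\xi}|_{[u,v]} \sim \mathcal{N}(0, 1/(v-u+1))$.

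For the concentration of $\hat{\mu}_i$ around $\mu_i$, the upper tail follows from the elementary bound $\hat{\mu}_i \leq \bar{Y}|_{[i, i+b]}$ (obtained by taking $u=i$ in the max--min formula); choosing $b \asymp tn^{2/3}/L$ makes the deterministic drift contribution at most $n^{-1/3}t/2$, while Gaussian concentration on the scalar $\bar{\xi}|_{[i, i+b]}$ (which has standard deviation of order $(tn^{2/3})^{-1/2}$) produces a probability bound of order $\exp(-ct^3)$. The lower tail uses the dual inequality $\hat{\mu}_i \geq \min_{v \geq i} \bar{Y}|_{[i-a, v]}$ with $a \asymp tn^{2/3}/L^2$: the drift is then uniformly bounded below by $-n^{-1/3}t/4$, and the stochastic infimum $\inf_{v \geq i} \bar{\xi}|_{[i-a, v]}$ can be controlled by splitting $v-i$ into dyadic blocks and applying Doob's maximal inequality to the random walk $b \mapsto \sum_{j=i-a}^{i+b}\xi_j$ on each block. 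Combining the two tails yields the first claim with an even slightly stronger rate $\exp(-ct^3/C)$.

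For the tail of $h_1^*$: on $\{h_1^* > t\}$ the leftmost maximizer $u^* = i - h_1^* n^{2/3}$ satisfies $u^* \leq i - tn^{2/3}$, and $\hat{\mu}_i = \min_{v \geq i} \bar{Y}|_{[u^*, v]} \leq \bar{Y}|_{[u^*, i]}$ by taking $v = i$ in the inner minimum. Substituting the drift bound gives
\[\hat{\mu}_i - \mu_i \leq -\frac{h_1^*}{2Ln^{1/3}} + \bar{\xi}|_{[u^*, i]}.\]
Intersecting with the event $\{\hat{\mu}_i - \mu_i \geq -Cn^{-1/3}\sqrt{t}\}$, which has probability $\geq 1 - C\exp(-t/C)$ by the first claim, forces, for $t$ exceeding an absolute constant,
\[\sup_{a \geq tn^{2/3}} \bar{\xi}|_{[i-a, i]} \geq \frac{t}{4Ln^{1/3}}.\]
A dyadic decomposition of $a$ combined with Doob's inequality applied to the walk $a \mapsto \sum_{j=i-a}^i \xi_j$ bounds the probability of this event by $\sum_k \exp(-c\, 2^k t^2/n^{2/3})$, which is dominated by its first term at $2^k \asymp tn^{2/3}$ and is of order $\exp(-ct^3)$, stronger than the desired $\exp(-t^2/C)$.

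The main obstacle is the $h_1^*$ estimate: since $h_1^*$ is defined via a random argmax over a two-parameter family $(u,v) \mapsto \bar{Y}|_{[u,v]}$, a naive supremum bound would yield an unusable estimate. The key trick is that one may replace the inner minimum by the particular choice $v = i$ (always a valid upper bound), which collapses the problem to a one-parameter Gaussian-walk supremum on which dyadic chaining works. A minor technicality is that this argument must be combined with the lower-tail control on $\hat{\mu}_i$ from the first step in order to translate the geometric constraint $h_1^* > t$ into a genuine lower bound on the Gaussian average $\bar{\xi}|_{[u^*, i]}$.
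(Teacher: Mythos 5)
Two things go wrong.

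\smallskip
\textbf{The upper tail of $\hat{\mu}_i-\mu_i$ is based on a false inequality.} You claim ``$\hat{\mu}_i\leq \bar{Y}|_{[i,i+b]}$ (obtained by taking $u=i$ in the max--min formula),'' but fixing $u=i$ in $\hat{\mu}_i=\max_{u\leq i}\min_{v\geq i}\bar{Y}|_{[u,v]}$ gives $\hat{\mu}_i\geq \min_{v\geq i}\bar{Y}|_{[i,v]}$, a \emph{lower} bound. Upper bounds come from the dual min--max representation $\hat{\mu}_i=\min_{v\geq i}\max_{u\leq i}\bar{Y}|_{[u,v]}$: fixing $v$ is allowed, but the $\max_u$ must remain. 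The claim itself is false, e.g.~take $Y=(10,10,0,0)$ and $i=3$: then $\hat{\mu}_3=5$ whereas $\bar{Y}|_{[3,4]}=0$. Consequently the scalar-Gaussian argument for the upper tail does not go through. The paper's proof handles this by keeping the random $u^\ast$: since $\hat{\mu}_i=\min_{v\geq i}\bar{Y}|_{[u^\ast,v]}$ is an identity, fixing $v=i+n^{2/3}$ gives a valid upper bound $\hat{\mu}_i\leq\bar{Y}|_{[u^\ast,i+n^{2/3}]}$, after which the drift is bounded deterministically by $\mu_{\ceil{i+n^{2/3}}}-\mu_i\leq C_1 n^{-1/3}$ via monotonicity (so the randomness of $u^\ast$ is irrelevant for the bias), and the noise is bounded by the one-parameter supremum $\max_{h_1\geq 0}|\bar{\xi}|_{[i-h_1 n^{2/3},\,i+n^{2/3}]}|$, controlled by a blocking argument. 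You do essentially this (correctly) for the lower tail, so the fix is to run the same type of supremum argument on the upper side rather than appeal to a single scalar.

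\smallskip
\textbf{The $h_1^\ast$ tail loses a factor in the exponent.} You intersect with $\{\hat{\mu}_i-\mu_i\geq -Cn^{-1/3}\sqrt{t}\}$, which costs $C\exp(-t/C)$ from the first claim, and then note the Gaussian-walk supremum contributes $\exp(-ct^3)$. But the union of these two errors is dominated by $\exp(-t/C)$, which is \emph{weaker} than the claimed $\exp(-t^2/C)$. The resolution is to use a threshold linear in $t$ rather than $\sqrt{t}$: on $\{h_1^\ast>t\}$ the drift bound forces $\hat{\mu}_i-\mu_i\leq -t/(2Ln^{1/3})+\bar{\xi}|_{[u^\ast,i]}$, so intersecting with $\{\hat{\mu}_i-\mu_i\geq -t/(8Ln^{1/3})\}$ (which costs $C\exp(-ct^2)$) still forces $\bar{\xi}|_{[u^\ast,i]}\geq 3t/(8Ln^{1/3})$, and then the walk supremum argument finishes. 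This is exactly the structure of the paper's proof, which intersects with $\{\hat{\mu}_i-\mu_i\leq -(C_2/4)tn^{-1/3}\}$.

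\smallskip
Otherwise your route is close in spirit to the paper's, with one genuine difference worth noting: for $h_1^\ast$ you fix $v=i$ while the paper fixes $v=i+n^{2/3}$. Both are legitimate; the paper's choice has the advantage that the same supremum $\max_{h_1\geq 0}|\bar{\xi}|_{[i-h_1 n^{2/3},\,i+n^{2/3}]}|$ that appears in the concentration step is re-used, so a single tail estimate for that supremum serves both parts of the lemma.
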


\begin{proof}[Proof of Proposition \ref{prop:lower_bound_J_iso}]
	We write in the proof $\hat{\mu} = \hat{\mu}_{K_\uparrow}$ and $\mu=\mu_0$ for simplicity of notation. Note that $(J_{\hat{\mu}})_{ij}=\bm{1}_{\hat{\mu}_i=\hat{\mu}_j}(1/\abs{\{k: \hat{\mu}_k=\hat{\mu}_i\}})$. Note that
	\begin{align*}
	\Prob(\hat{\mu}_i=\hat{\mu}_j) &= \E\bigg[\bm{1}_{\hat{\mu}_i=\hat{\mu}_j}\cdot \frac{1}{ \abs{\{k: \hat{\mu}_k=\hat{\mu}_i\}}^{1/2}}\cdot \abs{\{k: \hat{\mu}_k=\hat{\mu}_i\}}^{1/2} \bigg]\\
	&\leq \sqrt{ (\E J_{\hat{\mu}})_{ij} }\cdot \sqrt{\E \abs{\{k: \hat{\mu}_k=\hat{\mu}_i\}}}.
	\end{align*}
	This implies that
	\begin{align}\label{ineq:iso_exp_J_1}
	(\E J_{\hat{\mu}})_{ij} \geq \frac{\Prob^2(\hat{\mu}_i=\hat{\mu}_j)}{ \E \abs{\{k: \hat{\mu}_k=\hat{\mu}_i\}} }.
	\end{align}
	We will bound the denominator from above and the numerator from below in the above display separately in the regime $\{(i,j): \abs{i-j}\leq \kappa n^{2/3}, 0.1 n\leq i,j\leq 0.9n\}$, where $\kappa=\kappa(L)>0$ is a constant to be specified below. 
	
	Fix $0.1n\leq i\leq 0.9n$. 	First we provide an upper bound for the denominator in (\ref{ineq:iso_exp_J_1}). By Lemma \ref{lem:iso_localize} and using the notation defined therein, there exists some large $c=c(L,\epsilon)>1$ such that on an event $E_0$ with probability $1-\epsilon$,
	\begin{align}\label{ineq:iso_exp_J_2}
	\abs{\hat{\mu}_i-\mu_i}&\leq cn^{-1/3},
	\end{align}
	and
	\begin{align*}
	\Prob\big( E_1\equiv \big\{ h_1^\ast \geq c \big\}\big)\leq Ce^{-c^2/C},
	\end{align*}
	where $C=C(L)>0$ is a constant depending on $L$ only. Hence integrating the tail leads to the following: for some constant $C'=C'(L)>0$,
	\begin{align*}
	\E \abs{\{k\leq i: \hat{\mu}_k=\hat{\mu}_i\}} &\leq C' n^{2/3}.
	\end{align*}
	Similarly we can handle the case $k\geq i$, so we arrive at 
	\begin{align}\label{ineq:iso_exp_J_3}
	\E \abs{\{k: \hat{\mu}_k=\hat{\mu}_i\}} &\leq C'' n^{2/3}
	\end{align}
	for some constant $C''=C''(L)>0$.

	Next we provide a lower bound for the numerator of (\ref{ineq:iso_exp_J_1}). On the event $E_2=\{1\vee (i- c^{-100} n^{2/3})\leq u^\ast \leq i\}$ (there is nothing special about the constant $100$---a large enough value suffices), we have
	\begin{align*}
	&n^{1/3}\big(\hat{\mu}_i-\mu_i\big)= \min_{v\geq i} n^{1/3}\big(\bar{\mu}|_{[u^\ast,v]}-\mu_i+\bar{\xi}|_{[u^\ast,v]}\big)\\
	&\leq \min_{i\leq v\leq n\wedge (i+c^{-10} n^{2/3})} n^{1/3}\big(\bar{\mu}|_{[u^\ast,v]}-\mu_i+\bar{\xi}|_{[u^\ast,v]}\big)\\
	&\leq \min_{i\leq v\leq n\wedge (i+c^{-10} n^{2/3})} n^{1/3}\bar{\xi}|_{[u^\ast,v]} +  \max_{i\leq v\leq n\wedge (i+c^{-10} n^{2/3})} n^{1/3} \big(\bar{\mu}|_{[u^\ast,v]}-\mu_i\big)\\
	& \leq \min_{0\leq h_2 \leq c^{-10}} \frac{W(-h_1^\ast)+W(h_2)+R_n }{h_1^\ast+h_2+\abs{\mathcal{O}(n^{-2/3})} }+\mathcal{O}(c),
	\end{align*}
	where $R_n = \mathcal{O}_{a.s.}(\log n/n^{1/3})$ comes from Kolm\'os-Major-Tusn\'ady strong embedding, and $W$ denotes a standard two-sided Brownian motion starting from $0$. The bound $\mathcal{O}(c)$ for the bias term follows as
	\begin{align}\label{ineq:iso_exp_J_5}
	&\max_{i\leq v\leq n\wedge (i+c^{-10} n^{2/3})} n^{1/3} \big(\bar{\mu}|_{[u^\ast,v]}-\mu_i\big) \leq \max_{i\leq v\leq n\wedge (i+cn^{2/3})} n^{1/3} \big(\bar{\mu}|_{[i,v]}-\mu_i\big) \nonumber\\
	&\leq \max_{n^{-2/3}\leq h_2\leq c}\frac{ n^{1/3}\sum_{\ell \in [i,i+h_2n^{2/3}]\cap \mathbb{Z}}(\mu_\ell-\mu_i)}{\floor{h_2n^{2/3}} +1} \nonumber\\
	&\leq  \max_{n^{-2/3}\leq h_2\leq c}\frac{ n^{1/3}(L/n)\sum_{\ell \in [0,h_2n^{2/3}]\cap \mathbb{Z}}\ell }{\floor{h_2n^{2/3}}+1 } = \mathcal{O}(c).
	\end{align}
	Now on the event $E_2$, 
	\begin{align*}
	W(-h_1^\ast) \leq \sup_{0\leq h_1\leq c^{-100}}W(-h_1)\equald c^{-50} \sup_{0\leq t\leq 1} W(t)\equiv c^{-50}Z.
	\end{align*}
	By reflection principle for Brownian motion, for any $u>0$,
	\begin{align*}
	\Prob\big(\{W(-h_1^\ast)>c^{-50} u\}\cap E_2\big)\leq \Prob\big(Z> u) = 2\Prob\big(W(1)>u\big)\leq 2e^{-u^2/2}.
	\end{align*}
	Let $h_2^\circ$ be such that
	\begin{align*}
	W(h_2^\circ)\equiv \inf_{0\leq h_2\leq c^{-10}} W(h_2) \equald c^{-5} \inf_{0\leq t\leq 1}W(t) = -c^{-5} Z.
	\end{align*}
	So for $u>0$,
	\begin{align*}
	\Prob(W(h_2^\circ)<-c^{-5}u) = \Prob(Z>u) =1- \Prob\big(\abs{\mathcal{N}(0,1)}\leq u\big)\geq 1-2u.
	\end{align*}
	Hence on the event $E_2$ intersected with an event with probability at least $1-4\epsilon$, 
	\begin{align*}
	&\min_{0\leq h_2 \leq c^{-10}} \frac{W(-h_1^\ast)+W(h_2)+R_n }{h_1^\ast+h_2+\abs{\mathcal{O}(n^{-2/3})} }\leq \frac{W(-h_1^\ast)+W(h_2^\circ)+R_n}{h_1^\ast+h_2^\circ +\abs{\mathcal{O}(n^{-2/3})} }\\
	& \leq \frac{c^{-50}\sqrt{2\log(1/\epsilon)} -c^{-5}\epsilon + R_n }{h_1^\ast+h_2^\circ+\abs{\mathcal{O}(n^{-2/3})} }\leq -C_\epsilon\cdot c^5.
	\end{align*}
	where the last inequality follows by choosing for $c=c(\epsilon)$ large enough followed by $n$ large enough, and $h_1^\ast+h_2^\circ \leq c^{-100}+c^{-10}\leq 2c^{-10}$ on $E_2$. Combining the above estimates, we see that 
	\begin{align*}
	n^{1/3}\big(\hat{\mu}_i-\mu_i\big)\leq -C_\epsilon'\cdot c^5
	\end{align*}
	on the event $E_2$ intersected with an event with probability at least $1-4\epsilon$, when $c$ and $n$ are chosen large enough, depending on $L,\epsilon$. This event must occur with small enough probability for $c$ large in view of (\ref{ineq:iso_exp_J_2}), so we have proved that $\Prob(E_2)\leq 5\epsilon$ for large enough $c=c(L,\epsilon)>1$ and $n=n(L,\epsilon) \in \N$. This means that $\Prob(\hat{\mu}_i=\hat{\mu}_j)\geq 1-5\epsilon$ for $1\vee (i-c^{-100} n^{2/3})\leq j\leq i$ for large enough $c=c(L,\epsilon)>1$ and $n=n(L,\epsilon) \in \N$. Similarly one can handle the regime $i\leq j\leq (i+c^{-100} n^{2/3})\vee n$. In summary, we have proved there exists some $\kappa=\kappa(L)>0$ such that
	\begin{align}\label{ineq:iso_exp_J_4}
	\Prob(\hat{\mu}_i=\hat{\mu}_j)\geq 1/2
	\end{align}
	holds for $\{(i,j): \abs{i-j}\leq \kappa n^{2/3}, 0.1 n\leq i,j\leq 0.9n\}$ for $n$ large enough. The claim of the proposition now follows by plugging (\ref{ineq:iso_exp_J_3}) and (\ref{ineq:iso_exp_J_4}) into (\ref{ineq:iso_exp_J_1}).
\end{proof}

Now we are in position to prove Theorem \ref{thm:lrt_clt_global_testing_iso}.
\begin{proof}[Proof of Theorem \ref{thm:lrt_clt_global_testing_iso}-(1)]
	We write in the proof $\hat{\mu} = \hat{\mu}_K$ and $\mu=\mu_0$ for simplicity of notation. $\Prob_\mu, \E_\mu$ are shorthanded to $\Prob,\E$ if no confusion could arise. For $\kappa>0$, let $I_\ell\equiv I_\ell(\kappa)\equiv \{i: 0.1n+ (\ell-1)\cdot \kappa n^{2/3}\leq i\leq \big(0.1n + \ell\cdot \kappa n^{2/3}\big)\wedge 0.9n\}$ and $\ell_0$ be the maximum integer for which $I_{\ell_0} \subset [0.1n,0.9n]$. Clearly $\abs{I_\ell}\asymp \kappa n^{2/3}$ for all $1\leq \ell\leq \ell_0$ and $\ell_0\asymp n^{1/3}/\kappa$. Using the $\kappa$ specified in Proposition \ref{prop:lower_bound_J_iso}, we have
	\begin{align}\label{ineq:global_testing_iso_1}
	\pnorm{\E J_{\hat{\mu}}}{F}^2 = \sum_{i,j} \big(\E J_{\hat{\mu}}\big)_{ij}^2 & \geq \sum_{\ell=1}^{\ell_0} \sum_{(i,j)\in I_\ell\times I_\ell}(\E (J_{\hat{\mu}})_{ij}\big)^2 \nonumber\\
	&\gtrsim \sum_{\ell=1}^{\ell_0} \sum_{(i,j)\in I_\ell\times I_\ell} n^{-4/3}  \asymp n^{1/3}.
	\end{align}
	On the other hand, by e.g., \cite{zhang2002risk}, under the condition of Theorem \ref{thm:lrt_clt_global_testing_iso}, we have
	\begin{align*}
	\E \pnorm{\hat{\mu}-\mu}{}^2\lesssim n^{1/3}. 
	\end{align*}
	The claim now follows by applying Theorem \ref{thm:lrt_clt_global_testing} (by ignoring the bias term in the denominator) with the above two displays.
\end{proof}

\begin{proof}[Proof of Theorem \ref{thm:lrt_clt_global_testing_iso}-(2)]
	Following the notation used in \cite{meyer2000degrees}, let $\tilde{W}$ be the greatest convex minorant of $t\mapsto W(t)+t^2/2, t \in \R$, and $a=-\E[\tilde{W}(0)]>0, b = \E[\tilde{W}'(0)^2]>0$. 
	Using the same techniques as in \cite[Theorem 2, Corollary 4]{meyer2000degrees} but by performing Taylor expansion to the second order,  it can be shown that for all $C^2$ monotone functions $f:[0,1]\to \R$ with bounded first derivative $f'$ away from $0$ and $\infty$, and bounded second derivative $f''$ away from $\infty$,
	\begin{align}\label{ineq:global_testing_iso_1.1}
	\E_{\mu_f} \dv \hat{\mu}_{K_{\uparrow}} &= (a+b)\cdot  n^{1/3} \int_0^1 (f'(t))^{2/3}\,\mathrm{d}{t} +\mathcal{O}(1),\nonumber\\
	\E_{\mu_f} \pnorm{\hat{\mu}_{K_{\uparrow}}-\mu_f}{}^2 &= b  \cdot n^{1/3} \int_0^1 (f'(t))^{2/3}\,\mathrm{d}{t} +\mathcal{O}(1).
	\end{align}
	Here $\mu_f = (f(i/n))_{i=1}^n$ for a generic $f:[0,1]\to \R$, and the $\mathcal{O}(1)$ term in the above display depends only on the upper and lower bounds for $f'$ and the upper bound of $f''$.  Hence for the prescribed $f$,
	\begin{align*}
	m_{\mu_f} - \pnorm{\mu_f-\mu_{f_0}}{}^2& = 2\E_{\mu_f} \dv \hat{\mu}_{K_{\uparrow}} - \E_{\mu_f} \pnorm{\hat{\mu}_{K_{\uparrow}}-\mu_f}{}^2\\
	& = (2a+b) \cdot n^{1/3} \int_0^1 (f'(t))^{2/3}\,\mathrm{d}{t} +\mathcal{O}(1).
	\end{align*}
	On the other hand, for the prescribed $f$, (\ref{ineq:global_testing_iso_1}) provides a lower bound for $\sigma_{\mu_f}^2$, while the Gaussian-Poincar\'e inequality yields a matching upper bound:
	\begin{align*}
	n^{1/3}\lesssim \sigma^2_{\mu_f} \leq 4 \E_{\mu_f} \pnorm{\hat{\mu}_{K_{\uparrow}}-\mu_f}{}^2 \lesssim n^{1/3}. 
	\end{align*}
	Now with $\pnorm{\delta}{[1]}\equiv \int \delta'$, condition (\ref{cond:power_lrt_global_testing_0}) reduces to
	\begin{align}\label{ineq:global_testing_iso_2}
	&\pnorm{\mu_f-\mu_{f_0}}{}\ll \bigabs{m_{\mu_f}-m_{\mu_{f_0}}} \vee \sigma_{\mu_f}\nonumber\\
	\Leftrightarrow\quad & \sqrt{n\int (f-f_0)^2+\mathcal{O}(1)} \nonumber\\
	&\quad \ll \biggabs{ (2a+b) n^{1/3} \int \Big\{ (f')^{2/3}-(f'_0)^{2/3}\Big\} +\mathcal{O}(1)+  n\int (f-f_0)^2 }  \bigvee n^{1/6}\nonumber\\
	\Leftrightarrow\quad& \sqrt{n\rho_n^2+\mathcal{O}(1)} \ll \biggabs{-\abs{\mathcal{O}(1)}n^{1/3}\rho_n \pnorm{\delta}{[1]}+\mathcal{O}(1)+ n\rho_n^2 }\bigvee n^{1/6},
	\end{align}
	where in the last equivalence we used that
	\begin{align*}
	\int \Big\{ (f')^{2/3}-(f'_0)^{2/3}\Big\} &= \int \Big\{ (f')^{2/3}-(f'+\rho_n \delta')^{2/3}\Big\} =- \mathcal{O}(\rho_n) \int\delta'.
	\end{align*}
	By Theorem \ref{thm:power_lrt_global_testing}-(2), under (\ref{ineq:global_testing_iso_2}) the LRT is power consistent if and only if
	\begin{align}\label{ineq:global_testing_iso_3}
	\frac{\bigabs{-\abs{\mathcal{O}(1)}n^{1/3}\rho_n \pnorm{\delta}{[1]}+ n\rho_n^2}}{n^{1/6}} \to \infty.
	\end{align}
	We have two cases:
	\begin{enumerate}
		\item If $n\rho_n^2\gg n^{1/3}\rho_n \abs{ \pnorm{\delta}{[1]} }\Leftrightarrow \rho_n\gg n^{-2/3} \abs{\pnorm{\delta}{[1]}}$, then (\ref{ineq:global_testing_iso_3}) requires $\rho_n\gg n^{-5/12}$. 
		\item If $n\rho_n^2\ll n^{1/3}\rho_n \abs{ \pnorm{\delta}{[1]} }\Leftrightarrow \rho_n\ll n^{-2/3} \abs{\pnorm{\delta}{[1]}}$,  then (\ref{ineq:global_testing_iso_3}) requires $\rho_n\gg n^{-1/6} /\abs{\pnorm{\delta}{[1]}}$. This is not feasible as $\abs{\pnorm{\delta}{[1]}} = \abs{\int \delta'}=\mathcal{O}(1)$. 
	\end{enumerate}
	To summarize, (\ref{ineq:global_testing_iso_3}) is equivalent to requiring $\rho_n\gg n^{-5/12}$. In this regime (\ref{ineq:global_testing_iso_2}) also holds. The proof is complete.
\end{proof}

\begin{proof}[Proof of Lemma \ref{lem:iso_localize}]
	By the monotonicity of $\mu$, we have
	\begin{align*}
	\hat{\mu}_i - \mu_i &= \min_{h_2 \geq 0}\bar{Y}|_{[i - h_1^*n^{2/3}, i + h_2n^{2/3}]} - \mu_i \leq \bar{Y}|_{[i - h_1^*n^{2/3}, i + n^{2/3}]} - \mu_i\\
	&= \big(\bar{\mu}|_{[i - h_1^*n^{2/3}, i + n^{2/3}]} - \mu_i\big) + \bar{\xi}|_{[i - h_1^*n^{2/3}, i + n^{2/3}]}\\
	&\leq \big(\mu_{\ceil{i+n^{2/3}}} - \mu_i\big) + \max_{h_1\geq0}|\bar{\xi}|_{[i - h_1n^{2/3}, i + n^{2/3}]}|\\
	&\leq C_1 n^{-1/3} + \max_{h_1\geq0}|\bar{\xi}|_{[i - h_1n^{2/3}, i + n^{2/3}]}|,
	\end{align*}
	where the last inequality follows by (\ref{cond:lrt_clt_iso}). Note for any $t > 0$, a standard blocking argument (cf. \cite[Lemma 2]{han2019limit}) yields
	\begin{align}\label{ineq:local_iso_1}
	\Prob\bigg(\max_{h_1\geq0}|\bar{\xi}|_{[i - h_1n^{2/3}, i + n^{2/3}]}| > t n^{-1/3}\bigg) 
	\leq  Ce^{-t^2/C}.
	\end{align}
	This concludes the one-sided estimate $\Prob(\hat{\mu}_i - \mu_i >n^{-1/3}t)$. The other side is similar.
	
	Now consider $\Prob(h_1^\ast>t)$. On the event $\{h_1^* > t\}$, we have
	\begin{align*}
	\hat{\mu}_i - \mu_i &= \min_{h_2 \geq 0}\bar{Y}|_{[i - h_1^*n^{2/3}, i + h_2n^{2/3}]}\\
	&\leq \big(\bar{\mu}|_{[i - h_1^*n^{2/3}, i + n^{2/3}]} - \mu_i\big) + \bar{\xi}|_{[i - h_1^*n^{2/3}, i + n^{2/3}]}\\
	&\leq \big(\bar{\mu}|_{[i - tn^{2/3}, i + n^{2/3}]} - \mu_i\big) + \max_{h_1\geq0}\bigabs{\bar{\xi}|_{[i - h_1n^{2/3}, i + n^{2/3}]}}\\
	&\leq -C_2\cdot t n^{-1/3} + C_3\cdot n^{-1/3}+ \max_{h_1\geq0}\bigabs{\bar{\xi}|_{[i - h_1n^{2/3}, i + n^{2/3}]}},
	\end{align*}
	where the last inequality follows from calculations similar to (\ref{ineq:iso_exp_J_5}), but now using both the upper and lower bound parts of (\ref{cond:lrt_clt_iso}). Choosing $t\geq 2C_3/C_2$, and replacing $t$ in (\ref{ineq:local_iso_1}) by $C_2 t/4$, we see that
	\begin{align*}
	\Prob\big(h_1^\ast>t\big)\leq \Prob\big(\hat{\mu}_i-\mu_i\leq -(C_2/4) tn^{-1/3}\big)+ C_4 e^{-t^2/C_4}.
	\end{align*}
	The claim follows by adjusting constants.
\end{proof}

\subsection{Proof of Theorem \ref{thm:lrt_clt_global_testing_lasso}}\label{pf:lrt_clt_global_testing_lasso}
We first prove Proposition \ref{prop:lasso_estimates}. The following lemma will be useful to control the term $\mathfrak{p}_{\lambda,\mu_0}$ therein. We present its proof at the end of this subsection.
\begin{lemma}\label{lem:l1_deviation}
	For any $\theta\in\R^p$ with $\|\theta\|_1\leq \lambda$, let $\mu \equiv X\theta \in K_{X,\lambda}$. There exists some  universal constant $C > 0$ such that for $t\geq 1$,
	\begin{align*}
	\Prob_\mu\bigg(\|\hat{\theta}^0\|_1 \geq \pnorm{\theta}{1} +t \sqrt{\frac{p}{n\lambda_{\min}(\Sigma)}}\bigg) \leq e^{-t^2/C}.
	\end{align*}
\end{lemma}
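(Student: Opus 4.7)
The plan is to reduce the claim to a Gaussian concentration inequality for the Lipschitz functional $\xi \mapsto \|(X^\top X)^{-1}X^\top \xi\|_1$. The starting point is the linear decomposition of the OLS estimator: under the model $Y = X\theta + \xi$, we have $\hat\theta^0 = (X^\top X)^{-1}X^\top Y = \theta + V$ with $V \equiv (X^\top X)^{-1} X^\top \xi \sim \mathcal{N}(0, (X^\top X)^{-1})$. By the triangle inequality $\|\hat\theta^0\|_1 \leq \|\theta\|_1 + \|V\|_1$, so the problem reduces to bounding the upper tail of $\|V\|_1$ as a function of the Gaussian vector $\xi$.

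The next step is to compute the Lipschitz constant of the map $F:\xi \mapsto \|(X^\top X)^{-1}X^\top \xi\|_1$ from $(\R^n, \|\cdot\|_2)$ to $\R$. Using $\|\cdot\|_1 \leq \sqrt{p}\,\|\cdot\|_2$ on $\R^p$, together with $\|(X^\top X)^{-1}X^\top\|_{2 \to 2} = \lambda_{\min}^{-1/2}(X^\top X) = (n\lambda_{\min}(\Sigma))^{-1/2}$, I obtain the Lipschitz bound $L \leq \sqrt{p/(n\lambda_{\min}(\Sigma))}$. In parallel, I control the mean: each coordinate $V_i$ is $\mathcal{N}(0, [(X^\top X)^{-1}]_{ii})$, and Cauchy--Schwarz combined with $\tr((X^\top X)^{-1}) \leq p/(n\lambda_{\min}(\Sigma))$ gives $\E \|V\|_1 \lesssim \sqrt{p/(n\lambda_{\min}(\Sigma))}\cdot \sqrt{p}$, which I will absorb into the Gaussian deviation estimate.

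The final step applies the Borell--TIS / Gaussian concentration inequality for Lipschitz functionals: for every $u > 0$,
\begin{equation*}
\Prob\big(\|V\|_1 \geq \E \|V\|_1 + u\big) \leq \exp\!\big(-u^2/(2L^2)\big).
\end{equation*}
Choosing $u$ proportional to $t\sqrt{p/(n\lambda_{\min}(\Sigma))}$ and using the mean and Lipschitz bounds above, the right-hand side becomes $\exp(-t^2/C)$ for some universal $C$, yielding the claim after adjusting $C$ to absorb the mean term into the exponent.

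The main (mild) technical point will be calibrating constants so that the contribution of $\E\|V\|_1$ to the deviation is swallowed into the prefactor $C$ in the sub-Gaussian tail $e^{-t^2/C}$ while preserving the stated threshold $t\sqrt{p/(n\lambda_{\min}(\Sigma))}$; everything else (the decomposition, the operator-norm computation for $L$, the trace bound for the mean, and the application of Borell--TIS) is routine Gaussian analysis.
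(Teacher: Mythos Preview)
Your overall strategy coincides with the paper's: write $\hat\theta^0 = \theta + V$ with $V = (X^\top X)^{-1}X^\top\xi$, apply Gaussian concentration to $\|V\|_1$ with Lipschitz/variance parameter $L^2 = p/(n\lambda_{\min}(\Sigma))$, and then control $\E\|V\|_1$. Your Lipschitz computation and the paper's $\sigma^2 = \sup_{\|b\|_\infty\le 1}\var\langle (X^\top X)^{-1/2}Z,b\rangle$ computation are equivalent and give the same $L$.

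The gap is in the final ``absorb the mean'' step. Your Cauchy--Schwarz/trace argument yields $\E\|V\|_1 \lesssim \sqrt{p}\cdot L$, and this is in general sharp: for $X^\top X = nI_p$ one has $V \sim \mathcal{N}(0, I_p/n)$ and $\E\|V\|_1 = p\sqrt{2/(\pi n)} \asymp \sqrt{p}\,L$. With a mean of order $\sqrt{p}\,L$, you cannot absorb it into the threshold $tL$ for all $t\ge 1$ with a \emph{universal} $C$: at $t=1$ and $p$ large the threshold $L$ lies far below $\E\|V\|_1$, so $\Prob(\|V\|_1 \ge L)\to 1$, which is incompatible with $\le e^{-1/C}$. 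The paper handles the mean differently, writing $\|V\|_1 = \sup_{\|b\|_\infty\le 1}\langle (X^\top X)^{-1/2}Z,b\rangle$ and invoking Dudley's entropy integral to claim $\E\|V\|_1 \lesssim L$, which is exactly the order the stated threshold requires. In light of your tight example, it is worth scrutinizing whether the intended threshold in the lemma is $t\sqrt{p/(n\lambda_{\min}(\Sigma))}$ or rather $t\cdot p/\sqrt{n\lambda_{\min}(\Sigma)}$; with the latter, your argument goes through verbatim.
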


\begin{proof}[Proof of Proposition \ref{prop:lasso_estimates}]
	
	\noindent (1). We will derive an explicit formula for $J_{\hat{\mu}}$ using the results of \cite{kato2009degrees}. First note by the chain rule that
	\begin{align*}
	J_{\hat{\mu}}(\xi) = \frac{\partial\hat{\mu}}{\partial \xi} = \frac{\partial\hat{\mu}}{\partial \hat{\theta}} \frac{\partial\hat{\theta} }{\partial \hat{\theta}^0} \frac{\partial \hat{\theta}^0}{\partial \xi} = X\frac{\partial \hat{\theta}}{\partial\hat{\theta}^0}(X^\top X)^{-1}X^\top.
	\end{align*}
	Let $\tilde{K}\equiv \tilde{K}_\lambda\equiv \{\theta \in\R^p: \|\theta\|_1\leq \lambda\}$. For each $m \in \{1,\ldots,p\}$, suppose there are $N_m$ faces of $\tilde{K}_\lambda$ of dimension $m$, denoted as $\{F_{m,\ell}\}_{\ell=1}^{N_m}$. Then we can partition $\partial \tilde{K}_\lambda$ as $\{F_{m,\ell}\}_{m,\ell}$. Let $\big\{E_0,\{E_{m,\ell}\}_{m,\ell}\big\}$ be a partition of $\R^p$ defined as $E_0 \equiv \tilde{K}_\lambda$, $E_{m,\ell} \equiv \{y\in \R^p: \Pi_{\tilde{K}_\lambda}(y)\in F_{m,\ell}\}$. Let $E_0^\circ, E_{m,\ell}^\circ$ be the interiors of $E_0, E_{m,\ell}$, respectively. Since $\tilde{K}_\lambda$ is a polyhedron, it follows by \cite[Equation (3.6) and Remark 3.3]{kato2009degrees} that when $\hat{\theta}^0\in E^\circ_{m,\ell}$,
	\begin{align*}
	\frac{\partial\hat{\theta}}{\partial\hat{\theta}^0} = B_{m,\ell}(B_{m,\ell}^\top X^\top XB_{m,\ell})^{-1}B_{m,\ell}^\top X^\top X, 
	\end{align*}
	where $B_{m,\ell} = [b_{1,\ell},\ldots,b_{p-m,\ell}]\in \R^{p\times (p-m)}$ whose columns are linearly independent and span the tangent space at any point in $F_{m,\ell}$. Hence on the event $\{\hat{\theta}^0 \in E^\circ_{m,\ell}\}$, 
	\begin{align*}
	J_{\hat{\mu}}(\xi) = XB_{m,\ell}(B_{m,\ell}^\top X^\top XB_{m,\ell})^{-1}(B_{m,\ell}^\top X^\top),
	\end{align*}
	which is a projection matrix onto the column space of $XB_{m,\ell}$. In other words, a.e. on $\R^n$,
	\begin{align}\label{ineq:jacobian_lasso}
	J_{\hat{\mu}}(\xi) = \bm{1}_{\hat{\theta}^0\in E_0^\circ}\cdot Z_0 + \sum_{m=1}^p\sum_{\ell=1}^{N_m}\bm{1}_{\hat{\theta}^0\in E^\circ_{m,\ell}}\cdot Z_{m,\ell},
	\end{align}
	where
	\begin{align*}
	Z_0\equiv X(X^\top X)^{-1}X^\top, \quad Z_{m,\ell}\equiv XB_{m,\ell}(B_{m,\ell}^\top X^\top XB_{m,\ell})^{-1}(B_{m,\ell}^\top X^\top).
	\end{align*}
	Hence,
	\begin{align*}
	\E_{\mu_0} J_{\hat{\mu}}(\xi) =Z_0\cdot\Prob_{\mu_0}\big(\hat{\theta}^0\in E^\circ_0\big)+\sum_{m=1}^p\sum_{\ell=1}^{N_m}\Prob_{\mu_0}\big(\hat{\theta}^0\in E^\circ_{m,\ell}\big)\cdot Z_{m,\ell},
	\end{align*}
	and therefore
	\begin{align*}
	\pnorm{\E_{\mu_0} J_{\hat{\mu}}}{F}^2&=\sum_{i,j=1}^n \big(\E_{\mu_0} J_{\hat{\mu}}\big)_{i j}^2 \\
	&= \sum_{i,j=1}^n \bigg[(Z_0)_{ij}\Prob\big(\hat{\theta}^0 \in E^\circ_0\big) + \sum_{m,\ell}\Prob\big(\hat{\theta}^0\in E^\circ_{m,\ell}\big)(Z_{m,\ell})_{ij}\bigg]^2\\
	&\geq \sum_{i,j=1}^n \bigg[(Z_0)_{ij}\Prob_{\mu_0}\big(\hat{\theta}^0 \in E^\circ_0\big) - \Prob_{\mu_0}\big(\hat{\theta}^0 \notin E^\circ_0\big)\max_{m,\ell}\abs{(Z_{m,\ell})_{ij}}\bigg]_+^2\\
	&\stackrel{(*)}{\geq} \sum_{i,j=1}^n \Big[(Z_0)_{ij}\Prob_{\mu_0}\big(\hat{\theta}^0 \in E^\circ_0\big) - \Prob_{\mu_0}\big(\hat{\theta}^0 \notin E^\circ_0\big)\Big]_+^2\\
	&\stackrel{(*)}{\geq} \sum_{i,j=1}^n \Big[(Z_0)_{ij}- 2\Prob_{\mu_0}\big(\hat{\theta}^0 \notin E^\circ_0\big)\Big]_+^2\\
	&\stackrel{(**)}{\geq} \sum_{i,j=1}^n (Z_0)_{ij}^2/2 - 4n^2\Prob_{\mu_0}\big(\hat{\theta}^0 \notin E^\circ_0\big)^2\\
	&= p/2 - 4n^2\Prob_{\mu_0}\big(\hat{\theta}^0 \notin E^\circ_0\big)^2.
	\end{align*}
	Here we have used the following:
	\begin{itemize}
		\item In $(\ast)$, we apply the estimate 
		\begin{align*}
		\pm Z_{ij} = \pm e_i^\top Z e_j \leq \sup_{u,v: \pnorm{u}{}=\pnorm{v}{}=1} u^\top Z v = \pnorm{Z}{}\leq 1, \quad Z\in\{Z_0,Z_{m,\ell}\}.
		\end{align*}
		This means $\max_{i,j}\abs{Z_{ij}}\leq 1$ for $Z\in\{Z_0,Z_{m,\ell}\}$. 
		\item In $(\ast\ast)$ we use the estimate $(a - b)_+^2\geq a^2/2 - b^2$.
		\item In the last equality we use $\sum_{i,j} (Z_0)_{ij}^2 = \tr(Z_0Z_0^\top) = p$.
	\end{itemize}
	Thus, claim (1) follows.

	\noindent (2). Note that
	\begin{align*}
	\E_\mu \dv \hat{\mu} &= \E_\mu \tr (J_{\hat{\mu}}) \\
	& = \Prob_\mu\big(\hat{\theta}^0 \in E_0^\circ\big) \tr (Z_0) + \sum_{m=1}^p\sum_{\ell=1}^{N_m} \Prob_\mu\big(\hat{\theta}^0\in E^\circ_{m,\ell}\big) \tr ( Z_{m,\ell})\\
	& = p - p \Prob_\mu\big(\hat{\theta}^0 \notin E_0^\circ\big)+ \sum_{m=1}^p\sum_{\ell=1}^{N_m} \Prob_\mu\big(\hat{\theta}^0\in E^\circ_{m,\ell}\big) (p-m).
	\end{align*}
	Hence
	\begin{align*}
	\bigabs{\E_\mu \dv \hat{\mu}-p}&\leq 2p \Prob_\mu\big(\hat{\theta}^0 \notin E_0^\circ\big),
	\end{align*}
	proving claim (2).

	\noindent (3). 
	When $\hat{\theta}^0 \in E_0^\circ$, $\hat{\theta}=\hat{\theta}^0$ as $\hat{\theta}$ is the projection of $\hat{\theta}^0$ onto $\tilde{K}$ with respect to $\|\cdot\|_X\equiv \big(\cdot^\top X^\top X \cdot\big)^{1/2}$, cf.~\cite[Equation (1.6)]{kato2009degrees}. This means
	\begin{align*}
	\E_\mu \pnorm{\hat{\mu}-\mu}{}^2 & = \E_\mu\big( \pnorm{\hat{\mu}-\mu}{}^2 \bm{1}_{ \hat{\theta}^0 \in E_0^\circ }\big)+ \E_\mu \big(\pnorm{\hat{\mu}-\mu}{}^2 \bm{1}_{ \hat{\theta}^0 \notin E_0^\circ }\big)\\
	& = \E_\mu \big(\pnorm{X\hat{\theta}^0 - X\theta}{}^2  \bm{1}_{ \hat{\theta}^0 \in E_0^\circ } \big) + \E_\mu\big( \pnorm{\hat{\mu}-\mu}{}^2 \bm{1}_{ \hat{\theta}^0 \notin E_0^\circ }\big)\\
	& = \E_\mu \pnorm{X\hat{\theta}^0 - X\theta}{}^2  + R_{n,\mu} = p+R_{n,\mu},
	\end{align*}
	where 
	\begin{align*}
	R_{n,\mu} \equiv \E_\mu\big( \pnorm{\hat{\mu}-\mu}{}^2 \bm{1}_{ \hat{\theta}^0 \notin E_0^\circ }\big)- \E_\mu \big(\pnorm{Z_0 \xi}{}^2  \bm{1}_{ \hat{\theta}^0 \notin E_0^\circ }\big).
	\end{align*}
	As $\|\theta\|_1 \leq \lambda$, $\pnorm{\hat{\mu}-\mu}{}^2 \leq 2 \pnorm{Y-\hat{\mu}}{}^2+2\pnorm{\xi}{}^2 \leq 4\pnorm{\xi}{}^2$ by using $\pnorm{Y-\hat{\mu}}{}^2\leq \pnorm{Y-\mu}{}^2=\pnorm{\xi}{}^2$ via the definition of projection, we have
	\begin{align*}
	\E_\mu\big( \pnorm{\hat{\mu}-\mu}{}^2 \bm{1}_{ \hat{\theta}^0 \notin E_0^\circ }\big)& \leq 4 \sqrt{\E \pnorm{\xi}{}^4} \sqrt{\Prob_\mu\big( \hat{\theta}^0 \notin E_0^\circ  \big)}\leq C n \sqrt{\Prob_\mu\big( \hat{\theta}^0 \notin E_0^\circ  \big)}.
	\end{align*}
	On the other hand, a similar estimate yields
	\begin{align*}
	\E_\mu \big(\pnorm{Z_0 \xi}{}^2  \bm{1}_{ \hat{\theta}^0 \notin E_0^\circ }\big)\leq Cp \sqrt{\Prob_\mu\big( \hat{\theta}^0 \notin E_0^\circ  \big)}\leq Cn \sqrt{\Prob_\mu\big( \hat{\theta}^0 \notin E_0^\circ  \big)}.
	\end{align*}
	Hence
	\begin{align*}
	\bigabs{\E_\mu \pnorm{\hat{\mu}-\mu}{}^2-p}\leq C n \sqrt{\Prob_\mu\big( \hat{\theta}^0 \notin E_0^\circ  \big)}.
	\end{align*}
	This completes the proof of claim (3). 
\end{proof}

\begin{proof}[Proof of Theorem \ref{thm:lrt_clt_global_testing_lasso}]
	The first claim follows from Proposition \ref{prop:lasso_estimates}-(1)(3) and Theorem \ref{thm:lrt_clt_global_testing} (by ignoring the bias term in the denominator). For the second claim, by Proposition \ref{prop:lasso_estimates}-(2)(3),
	\begin{align*}
	m_\mu-\big(\pnorm{\mu-\mu_0}{}^2 + p\big) = 2\E_\mu \dv \hat{\mu}_{K_{X,\lambda}}-\E_\mu \pnorm{\hat{\mu}_{K_{X,\lambda}}-\mu}{}^2 - p= \mathcal{O}(n\cdot \mathfrak{p}_{\lambda,\mu}^{1/2}).
	\end{align*}
	This entails that $m_\mu - m_{\mu_0} = \|\mu - \mu_0\|^2 + n\cdot\mathcal{O}(\mathfrak{p}_{\lambda,\mu}^{1/2}\vee \mathfrak{p}_{\lambda,\mu_0}^{1/2})$. Furthermore, using Gaussian-Poincar\'e inequality along with Proposition \ref{prop:lasso_estimates}-(1)(3), we have
	\begin{align*}
	\sigma_{\mu_0}^2 \leq 4\E_{\mu_0}\|\hat{\mu} - \mu_0\|^2 \leq 4p + C(n\mathfrak{p}_{\lambda,\mu_0}^{1/2}) = \mathcal{O}(p),
	\end{align*}
	where the last inequality follows from the condition $n\mathfrak{p}_{\lambda,\mu_0}^{1/2} = \mathfrak{o}(1)$. This, along with lower bound for $\sigma_{\mu_0}^2$ derived in Proposition \ref{prop:lasso_estimates}-(1), yields that $\sigma_{\mu_0}^2 \asymp p$. Therefore, under the condition $n\cdot(\mathfrak{p}_{\lambda,\mu}^{1/2}\vee \mathfrak{p}_{\lambda,\mu_0}^{1/2}) = \mathfrak{o}(1)$, (\ref{cond:power_lrt_global_testing_0}) is satisfied automatically, and (\ref{cond:power_lrt_global_testing_1}) is equivalent to
	\begin{align*}
	\biggabs{\frac{ n\cdot\mathcal{O}\big(\mathfrak{p}_{\lambda,\mu}^{1/2}\vee \mathfrak{p}_{\lambda,\mu_0}^{1/2}\big) + \pnorm{\mu-\mu_0}{}^2}{ p^{1/2} } } \to \infty \Leftrightarrow \pnorm{\mu-\mu_0}{}\gg p^{1/4}.
	\end{align*}
	The proof is complete.
\end{proof}

\begin{proof}[Proof of Lemma \ref{lem:l1_deviation}]
	Recall that $\Sigma = X^\top X/n$. Note that 
	\begin{align*}
	\hat{\theta}^0 = (X^\top X)^{-1}X^\top Y = \theta + (X^\top X)^{-1}X^\top \xi \stackrel{d}{=}  \theta +(X^\top X)^{-1/2}Z
	\end{align*}
	with $Z\sim \mathcal{N}(0,I_p)$. For any $b\in \R^p$, let $f_b:\R^p\rightarrow \R$ be defined as $f_b(y)\equiv f_b(y;X)\equiv\iprod{(X^\top X)^{-1/2}y}{b}$. Then $\|\hat{\theta}^0\|_1 = \sup_{b:\|b\|_\infty \leq 1}\big[f_b(Z)+b^\top \theta\big]$. Hence by Gaussian concentration (cf. \cite[Theorem 5.8]{boucheron2013concentration}), for any $t > 0$,
	\begin{align}\label{ineq:l1_concentration}
	\Prob\Big(\|\hat{\theta}^0\|_1 - \E\|\hat{\theta}^0\|_1 > t\Big) \leq \exp\big(-t^2/2\sigma^2\big),
	\end{align}
	where $\sigma^2 = \sup_{b:\|b\|_\infty\leq 1}\var\big(f_b(Z)\big)$. Next we bound $\E\|\hat{\theta}^0\|_1$ and $\sigma^2$.	For $\sigma^2$, note that
	\begin{align*}
	\sigma^2 &= \sup_{b:\|b\|_\infty\leq 1} \E\iprod{(X^\top X)^{-1/2}Z}{b}^2 = n^{-1}\sup_{b:\|b\|_\infty\leq 1} b^\top\Sigma^{-1}b\\
	&\leq (p/n)\cdot \sup_{b:\|b\|_2\leq 1} b^\top\Sigma^{-1}b = p\big/\big(n\lambda_{\min}(\Sigma)\big).
	\end{align*}
	For the mean term, we have $\E\|\hat{\theta}^0\|_1\leq \|\theta\|_1 + \E\|(X^\top X)^{-1/2}\xi\|_1 = \|\theta\|_1 + \E\sup_{b:\|b\|_\infty\leq 1}f_b(Z)$. Note that the natural metric $d$ induced by the Gaussian process $\big(f_b(Z): b \in \R^p\big)$ takes the form
	\begin{align*}
	d^2(b_1,b_2) &\equiv \E\big(f_{b_1}(Z) - f_{b_2}(Z)\big)^2\\
	& = (b_1-b_2)^\top(n\Sigma)^{-1}(b_1-b_2) \leq n^{-1}\lambda_{\min}^{-1}(\Sigma)\|b_1-b_2\|^2,
	\end{align*}
	and a simple volume estimate yields that
	\begin{align*}
	\mathcal{N}(\epsilon,\{b:\|b\|_\infty\leq 1\}, d)\lesssim \big[(n\lambda_{\min}(\Sigma))^{-1/2}/\epsilon\big]^p.
	\end{align*}
	Hence by Dudley's entropy integral (cf. \cite[Theorem 2.3.6]{gine2015mathematical}), 
	\begin{align*}
	\E\|(X^\top X)^{-1/2}\xi\|_1 &\lesssim \int_0^\infty \sqrt{\log \big(1\vee \mathcal{N}(\epsilon, \{b:\|b\|_\infty\leq 1\}, d)\big)}\, \mathrm{d} \epsilon\\
	& \lesssim \sqrt{p\big/\big(n\lambda_{\min}(\Sigma)\big)}.
	\end{align*}
	The claim now follows from (\ref{ineq:l1_concentration}).
\end{proof}

\subsection{Proof of Theorem \ref{thm:clt_subspace_1d}}\label{pf:clt_subspace_1d}

By definition of $K_{0,k}$, we have $\delta_{K_{0,k}} = \dim(K_{0,k}) = k+1$. We will now show that
\begin{align}\label{ineq:clt_subspace_1d_1}
L_k^{-1} \big(\bm{1}_{k\geq 1}\log \log (16n)+\bm{1}_{k=0}\log (en)\big)\leq \delta_{K_{\uparrow,k}}\leq L_k \log (en),
\end{align}
where $L_k > 0$ only depends on $k$.

We first prove the upper bound in (\ref{ineq:clt_subspace_1d_1}) by induction. The baseline case $k = 0$ follows by \cite[Equation (D.12)]{amelunxen2014living}. Suppose the claim holds for some $k\in \mathbb{Z}_{\geq 0}$. For $k + 1$, note that $K_{\uparrow,k+1} = \cup_{\ell=1}^n K_{\uparrow,k+1;\ell}$ where $K_{\uparrow,k+1;\ell}$ contains all $\nu \in K_{\uparrow,k+1}$ such that $-\nu|_{[1:\ell]}$ is $k$-monotone, and $\nu|_{(\ell:n]}$ is $k$-monotone. Hence for any $\ell\in[1:n]$, it follows by \cite[Proposition 3.1]{amelunxen2014living} that
\begin{align*}
\delta_{K_{\uparrow,k+1;\ell}} \leq L_k\big(\log(e\ell) + \log(e(n-\ell)) \big) \leq 2L_k\log (en), 
\end{align*}
where the second inequality follows by induction. On the other hand, let $Z_k\equiv \sup_{\nu\in K_{\uparrow,k}\cap B(1)}\iprod{\nu}{\xi} = \|\Pi_{K_{\uparrow,k}}(\xi)\|$, then Gaussian concentration (cf. \cite[Theorem 5.8]{boucheron2013concentration}) entails that for any $t > 0$, 
\begin{align*}
\Prob(Z_k \geq \E Z_k + t) \leq \exp(-t^2/2).
\end{align*}
Hence, using the induction hypothesis $\E Z_k \leq \big(\E Z_k^2\big)^{1/2} \leq L_k^{1/2}\sqrt{\log (en)}$ and the union bound, it holds w.p. at least $1-\exp(-t)$ that
\begin{align*}
Z_{k+1} \equiv \sup_{\nu\in K_{\uparrow,k+1}\cap B(1)}\iprod{\nu}{\xi} &\leq \max_{1\leq \ell\leq n}\delta^{1/2}_{K_{\uparrow,k+1;\ell}} + \sqrt{2(t+\log (en))}\\
&\leq \big(2L_k\log (en)\big)^{1/2} + \sqrt{2(t+\log (en))}.
\end{align*}
Now the bound for $\delta_{K_{\uparrow,k+1}} =\E Z_{k+1}^2$ follows by integrating the tail.

Next we prove the lower bound in (\ref{ineq:clt_subspace_1d_1}) for $k\geq 1$. By Sudakov's minorization (cf. \cite[Theorem 2.4.12]{gine2015mathematical}), we have
\begin{align*}
\delta_{K_{\uparrow,k+1}}^{1/2} \geq \E Z_{k+1} &\gtrsim \sup_{\epsilon>0}\epsilon\sqrt{\log \mathcal{N}(\epsilon, K_{\uparrow,k+1}\cap B(1), \|\cdot\|)}\\
&\geq \sup_{\epsilon>0}\epsilon\sqrt{\log \mathcal{D}(2\epsilon, K_{\uparrow,k+1}\cap B(1), \|\cdot\|)},
\end{align*}
where $\mathcal{D}(\epsilon, T, d)$ is the maximal $\epsilon$-packing number of set $T$ with respect to the metric $d$. By taking $\epsilon$ to be small enough, the construction in \cite[Theorem 3.4]{shen2020phase} yields an $(2\epsilon)$-packing set of cardinality of the order $\log (en)$. This completes the lower bound proof.

Now the claim (1) follows from Theorem \ref{thm:power_lrt_subspace}-(1) and the lower bound in (\ref{ineq:clt_subspace_1d_1}). (2) follows from the upper bound in (\ref{ineq:clt_subspace_1d_1}) and Theorem \ref{thm:power_lrt_subspace}-(2). \qed

\appendix
\section{Additional proofs}

\subsection{Proof of Lemma~\ref{lem:var_proj}}\label{section:proof_preliminary} 
We provide the proof for (1)-(2) assuming $K$ is a polyhedral cone. The claim for a general convex cone $K$ follows from polyhedral approximation \cite[Section 7.3]{mccoy2014from}.

\noindent (1) As $V_K \equald \dv \Pi_K(\xi) = \tr\big(J_{\Pi_K}(\xi)\big)$, we have $\E V_K = \E \dv \Pi_K(\xi) = \E \iprod{\xi}{\Pi_K(\xi)} = \E \pnorm{\Pi_K(\xi)}{}^2 = \delta_K$.

\noindent (2)	The claim is proved in \cite[Proposition 4.4]{mccoy2014from} using the `Master Steiner formula', cf.~\cite[Theorem 3.1]{mccoy2014from}, which is a restatement of the chi-bar squared distribution. Below we provide a simple alternative proof of this claim using Gaussian integration-by-parts only. 

By expanding $V_K\equald \iprod{\xi}{\Pi_K(\xi)} - \big(\iprod{\xi}{\Pi_K(\xi)}-\dv \Pi_K(\xi)\big)$ and noting that $\E\iprod{\xi}{\Pi_K(\xi)} = \E \dv \Pi_K(\xi)$,
\begin{align*}
\var(V_K) 
& = \var \big(\iprod{\xi}{\Pi_K(\xi)}-\dv \Pi_K(\xi)\big) + \var\big(\iprod{\xi}{\Pi_K(\xi)} \big)\\
&\qquad -2 \E\big[\big(\iprod{\xi}{\Pi_K(\xi)}-\dv \Pi_K(\xi)\big) \iprod{\xi}{\Pi_K(\xi)} \big]\\
& = \E \tr J_{\Pi_K}^2(\xi)+\E \pnorm{\Pi_K(\xi)}{}^2+ \var\big(\iprod{\xi}{\Pi_K(\xi)} \big)\\
&\qquad -2\E\big[\iprod{\Pi_K(\xi)}{ \nabla\iprod{\xi}{\Pi_K(\xi)}}\big].
\end{align*}
The last equality follows from Gaussian integration-by-parts: (i) $\var(\iprod{\xi}{f(\xi)}-\dv f(\xi))=\E \tr J_f^2(\xi)+\E \pnorm{f(\xi)}{}^2$ (see e.g., \cite[Theorem 3]{stein1981estimation}, or \cite[Theorem 2.1]{bellec2018second}) and (ii) $\E[(\iprod{\xi}{f(\xi)}-\dv f(\xi))g(\xi)]=\E[\iprod{f(\xi)}{\nabla g(\xi)}]$ \cite[Equation (2.4)]{bellec2018second}. Note that (i) $\nabla \iprod{\xi}{\Pi_K(\xi)} = \nabla \pnorm{\Pi_K(\xi)}{}^2 =\nabla \pnorm{\xi-\Pi_{K^\ast}(\xi)}{}^2 = 2(\xi-\Pi_{K^\ast}(\xi))= 2\Pi_K(\xi)$ using the fact $K$ is a cone and Lemma \ref{lem:proj_basic}-(1), and (ii) $\E \tr J_{\Pi_K}^2(\xi) = \E \tr J_{\Pi_K}(\xi) = \E \dv \Pi_K(\xi)$ using the fact that when $K$ is polyhedral, $J_{\Pi_K}$ is a.e. a projection matrix (cf.~\cite[Remark 3.3]{kato2009degrees}). Finally using that $\E \dv \Pi_K(\xi) = \E \iprod{\xi}{\Pi_K(\xi)} = \E \pnorm{\Pi_K(\xi)}{}^2$ to conclude. 

\noindent (3)	The right inequality follows by an application of the improved Gaussian-Poincar\'e inequality stated in \cite[Theorem A.2]{goldstein2017gaussian} as follows: By Lemma \ref{lem:proj_basic}-(1) again, $\nabla \pnorm{\Pi_K(\xi)}{}^2 = \nabla \pnorm{\xi-\Pi_{K^\ast}(\xi)}{}^2 = 2(\xi-\Pi_{K^\ast}(\xi)) = 2\Pi_{K}(\xi)$, so  \cite[Theorem A.2]{goldstein2017gaussian} yields that
\begin{align*}
\var( \pnorm{\Pi_K(\xi)}{}^2)&\leq \frac{1}{2} \E \bigpnorm{\nabla \pnorm{\Pi_K(\xi)}{}^2}{}^2 + \frac{1}{2} \bigpnorm{\E \nabla\pnorm{\Pi_K(\xi)}{}^2}{}^2\\
& = 2\E\pnorm{\Pi_K(\xi)}{}^2+2 \pnorm{\E \Pi_K(\xi)}{}^2.
\end{align*}
The left inequality is an immediately consequence of (2). 
\qed

\subsection{Proof of Proposition \ref{prop:lrt_clt}}\label{section:proof_lrt_clt}

Recall the following second-order Poincar\'e inequality due to \cite{chatterjee2009fluctuations}.
\begin{lemma}[Second-order Poincar\'e inequality]\label{lem:sec_poincare}
	Let $\xi$ be an $n$-dimensional standard normal random vector. Let $F: \R^n \to \R$ be absolute continuous such that $F$ and its derivatives have sub-exponential growth at $\infty$. Let $\xi'$ be an independent copy of $\xi$. Define $T:\R^n\to \R$ by
	\begin{align*}
	T(y)\equiv \int_0^1 \frac{1}{2\sqrt{t}} \iprod{\nabla F (y)}{ \E_{\xi^\prime} \nabla F(\sqrt{t}y + \sqrt{1-t}\xi^\prime)}\,\mathrm{d}{t}.
	\end{align*}
	Then with $W\equiv F(\xi)$,
	\begin{align*}
	&d_{\mathrm{TV}}\bigg( \frac{W-\E W}{\sqrt{ \mathrm{Var}(W)}}, \mathcal{N}(0,1)\bigg)\leq   \frac{2 \sqrt{\mathrm{Var}(T(\xi))} }{ \mathrm{Var}(W)}.
	\end{align*}
\end{lemma}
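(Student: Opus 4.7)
The plan is to combine Stein's method for total variation with a covariance identity arising from the Ornstein--Uhlenbeck (OU) semigroup, and to exploit an exchangeability cancellation to turn this into an \emph{exact} Stein-kernel identity. First, I would invoke Stein's machinery: for every Borel set $A\subset \R$, the Stein equation $f_A'(x)-xf_A(x) = \bm{1}_A(x)-\Phi(A)$ admits a solution with $\|f_A\|_\infty\leq\sqrt{\pi/2}$ and $\|f_A'\|_\infty\leq 2$, so it suffices to bound $|\E[f'(W/\sigma)-(W/\sigma)f(W/\sigma)]|$ uniformly over $f=f_A$, writing $\sigma^2 \equiv \var(W)$.

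Next, I would establish the covariance identity
\[
\mathrm{Cov}\bigl(F(\xi),G(\xi)\bigr) = \int_0^1 \frac{1}{2\sqrt{t}}\,\E\bigl\langle \nabla F(\xi),\,\E_{\xi'}\nabla G(\sqrt{t}\,\xi+\sqrt{1-t}\,\xi')\bigr\rangle\,\d{t}.
\]
This comes from the OU semigroup $P_s g(x)\equiv \E_{\xi'} g(e^{-s}x+\sqrt{1-e^{-2s}}\,\xi')$ with generator $L = \Delta-x\cdot\nabla$: write $G(\xi)-\E G = -\int_0^\infty LP_sG(\xi)\,\d{s}$, apply Gaussian integration by parts $\E[F\cdot LH] = -\E\langle\nabla F,\nabla H\rangle$, use the commutation $\nabla P_s g(x) = e^{-s}\E_{\xi'}\nabla g(e^{-s}x+\sqrt{1-e^{-2s}}\,\xi')$, and change variable $t = e^{-2s}$. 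The sub-exponential growth hypotheses on $F$ and its derivatives license all the integration-by-parts and Fubini manipulations.

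Third---and this is the critical step---I would apply the covariance identity with $G = f(W/\sigma)$, so that $\nabla G(y) = (1/\sigma)f'((F(y)-\E F)/\sigma)\nabla F(y)$. Writing $\xi^{(t)}\equiv\sqrt{t}\,\xi+\sqrt{1-t}\,\xi'$ and $W^{(t)}\equiv F(\xi^{(t)})-\E F$, one obtains
\[
\sigma\cdot\E[Wf(W/\sigma)] = \int_0^1 \frac{1}{2\sqrt{t}}\,\E\bigl[f'(W^{(t)}/\sigma)\,\langle\nabla F(\xi),\nabla F(\xi^{(t)})\rangle\bigr]\,\d{t}.
\]
Now $(\xi,\xi^{(t)})$ is an exchangeable Gaussian pair (both marginals are standard, and the joint law is invariant under swap), and $H(\xi,\xi^{(t)})\equiv\langle\nabla F(\xi),\nabla F(\xi^{(t)})\rangle$ is a symmetric function of its two arguments, so $\E[f'(W^{(t)}/\sigma)\,H] = \E[f'(W/\sigma)\,H]$ by swapping $\xi\leftrightarrow\xi^{(t)}$. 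Pulling $f'(W/\sigma)$ outside the inner product and integral then yields the \emph{exact} Stein-kernel identity $\sigma\cdot\E[Wf(W/\sigma)] = \E[f'(W/\sigma)\,T(\xi)]$, with no remainder.

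Finally, taking $f(x)=x$ in this identity gives $\E T(\xi)=\sigma^2$, so
\[
\E[f'(W/\sigma)-(W/\sigma)f(W/\sigma)] = \E\bigl[f'(W/\sigma)\,(1-T(\xi)/\sigma^2)\bigr],
\]
which is at most $\|f'\|_\infty\sqrt{\var(T(\xi))}/\sigma^2\leq 2\sqrt{\var(T(\xi))}/\sigma^2$ in absolute value by Cauchy--Schwarz. Combining with Stein's reduction yields the claimed total variation bound. The main obstacle is the symmetry observation in the third step: one must recognise that $H(\xi,\xi^{(t)}) = H(\xi^{(t)},\xi)$ and invoke exchangeability of the OU-interpolated pair to eliminate $f'(W^{(t)}/\sigma)$ \emph{without} incurring a Lipschitz error. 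This exactness is precisely what produces the clean bound in terms of $\sqrt{\var(T(\xi))}$ alone, rather than an approximate bound with a Hessian-based remainder.
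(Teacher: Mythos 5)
Your argument is correct in substance, but it takes a genuinely different route from the paper: the paper does not reprove this lemma at all — it simply applies \cite[Lemma 5.3]{chatterjee2009fluctuations} to the normalized variable $(F(\xi)-\E F(\xi))/\sqrt{\var(F(\xi))}$ and uses translation/scale invariance of the total variation distance to restate the bound. What you have written is essentially a self-contained reconstruction of the proof of that cited lemma: Stein's method for total variation combined with the Ornstein--Uhlenbeck interpolation covariance identity, yielding the exact Stein-kernel identity $\sigma\,\E[W f(W/\sigma)]=\E[f'(W/\sigma)\,T(\xi)]$, then $\E T(\xi)=\sigma^2$ (which also follows at once by taking $G=F$ in the covariance identity), and Cauchy--Schwarz with $\|f_A'\|_\infty\le 2$. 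Two remarks. First, your exchangeability swap is valid — for each $t$ the pair $(\xi,\sqrt{t}\,\xi+\sqrt{1-t}\,\xi')$ is exchangeable and $\langle \nabla F(x),\nabla F(y)\rangle$ is symmetric — but it is not needed: applying the covariance identity with the roles of the two functions reversed (semigroup acting on $F$, gradient of $f(W/\sigma)$ evaluated at $\xi$) gives $\E[W f(W/\sigma)]=\sigma^{-1}\E[f'(W/\sigma)T(\xi)]$ in one line, with no remainder to cancel. Second, passing from Lipschitz test functions to indicators $\bm{1}_A$ requires the routine caveat that the Stein solution $f_A$ is only almost everywhere differentiable, so one should either smooth the indicators or argue via the standard total-variation Stein class to cover a possibly non-absolutely-continuous law of $W$; this is standard and does not change the constant. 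What your approach buys is a first-principles proof that makes transparent exactly where the sub-exponential growth hypotheses enter (integrability and Fubini for the semigroup representation); what the paper's approach buys is brevity, since after centering and scaling the statement is verbatim Chatterjee's lemma.
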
 
\begin{proof}
	Let $W' \equiv \big(F(\xi)-\E F(\xi)\big)/ \sqrt{\mathrm{Var}(F(\xi))}$, and $T' \equiv T/ \mathrm{Var}(F(\xi))$. Then \cite[Lemma 5.3]{chatterjee2009fluctuations} says that
	\begin{align*}
	d_{\mathrm{TV}}(W',\mathcal{N}(0,1))\leq 2\sqrt{\mathrm{Var}(T'(\xi))} = 2\sqrt{\mathrm{Var}(T(\xi))}/\mathrm{Var}(F(\xi)).
	\end{align*} 
	The claim follows by the invariance of the total variation metric by translation and scaling.
\end{proof}

\begin{proof}[Proof of Proposition \ref{prop:lrt_clt}]
	For any fixed $\mu\in\R^n$, let $F(\xi)\equiv F_\mu(\xi)\equiv \pnorm{\mu+\xi-\Pi_{K_0}(\mu+\xi)}{}^2- \pnorm{\mu+\xi-\Pi_{K}(\mu+\xi)}{}^2$. By Lemma \ref{lem:proj_basic}-(1), we have
	\begin{align*}
	\nabla F(\xi)&=2(\mu+\xi-\Pi_{K_0}(\mu+\xi)) - 2(\mu+\xi-\Pi_{K}(\mu+\xi)) \\
	&= 2\big(\Pi_{K}(\mu+\xi)-\Pi_{K_0}(\mu+\xi)\big). 
	\end{align*}
	To use the second-order Poincar\'e inequality, let $\xi'$ be an independent copy of $\xi$ and $\xi_t \equiv \sqrt{t}\xi+\sqrt{1-t}\xi'$, and let
	\begin{align*}
	T(\xi)&\equiv \int_0^1 \frac{1}{2\sqrt{t}} \iprod{\nabla F(\xi)}{\E_{\xi'} \nabla F(\xi_t)}\,\mathrm{d}{t}\\
	& = 4\E_{\xi'} \int_0^1 \frac{1}{2\sqrt{t}}\iprod{ \Pi_{K}(\mu+\xi)-\Pi_{K_0}(\mu+\xi)}{ \Pi_{K}(\mu+\xi_t)-\Pi_{K_0}(\mu+\xi_t)}\,\mathrm{d}{t}.
	\end{align*}
	Hence 
	\begin{align*}
	\nabla T(\xi) &= 4\E_{\xi'} \int \frac{1}{2\sqrt{t}}\bigg[ (J_{\Pi_{K}}-J_{\Pi_{K_0}})(\mu+\xi)^\top\big(\Pi_{K}(\mu+\xi_t)-\Pi_{K_0}(\mu+\xi_t) \big)\\
	&\qquad\qquad + \sqrt{t} (J_{\Pi_{K}}-J_{\Pi_{K_0}})(\mu+\xi_t)^\top\big(\Pi_{K}(\mu+\xi)-\Pi_{K_0}(\mu+\xi)\big)\bigg]\,\mathrm{d}{t}.
	\end{align*}
	The terms involved in the integral in $T$ are all absolute continuous, so we may continue to use Gaussian-Poincar\'e inequality:
	\begin{align*}
	&\var(T(\xi)) \leq \E \pnorm{\nabla T(\xi)}{}^2\\
	&\leq 16 \int_0^1 \frac{1}{2\sqrt{t}} \E \bigg\lVert (J_{\Pi_{K}}-J_{\Pi_{K_0}})(\mu+\xi)^\top\big(\Pi_{K}(\mu+\xi_t)-\Pi_{K_0}(\mu+\xi_t) \big)\\
	&\qquad\qquad +  \sqrt{t}(J_{\Pi_{K}}-J_{\Pi_{K_0}})(\mu+\xi_t)^\top\big(\Pi_{K}(\mu+\xi)-\Pi_{K_0}(\mu+\xi)\big) \bigg\lVert^2\,\mathrm{d}{t}\\
	&\qquad\qquad\qquad\qquad\hbox{(by Jensen's inequality applied to the measure $\mathrm{d}{t}/2\sqrt{t}$)}\\
	&\leq 16\times 8 \int_0^1 \frac{1}{2\sqrt{t}} \bigg(\E \pnorm{\Pi_{K}(\mu+\xi_t)-\Pi_{K_0}(\mu+\xi_t) }{}^2\\
	&\qquad\qquad\qquad\qquad\qquad + \E \pnorm{\Pi_{K}(\mu+\xi)-\Pi_{K_0}(\mu+\xi) }{}^2\bigg)\,\mathrm{d}{t}.
	\end{align*}
	Here in the last inequality we used that $\pnorm{J_{\Pi_{K}}-J_{\Pi_{K_0}}}{}\leq \pnorm{J_{\Pi_{K}}}{}+\pnorm{J_{\Pi_{K_0}}}{}\leq 2$. Now using that $\xi_t$ has the same distribution as $\xi$ for each $t \in [0,1]$, we arrive at
	\begin{align*}
	\var(T(\xi))\leq 16^2 \E \pnorm{\hat{\mu}_{K}-\hat{\mu}_{K_0}}{}^2.
	\end{align*}
	The claim now follows from the second-order Poincar\'e inequality in Lemma \ref{lem:sec_poincare}. 
\end{proof}

\section*{Acknowledgments}
The authors would like to thank two referees and an Associate Editor for their helpful comments and suggestions that significantly improved the exposition of the paper.

\bibliographystyle{amsalpha}
\bibliography{mybib}

\end{document}